\documentclass[reqno,11pt]{amsart}
\usepackage{amsmath,amssymb,latexsym,soul,cite,mathrsfs,accents}
\usepackage{color,enumitem,graphicx}
\usepackage{xcolor}
\usepackage[colorlinks=true,urlcolor=blue,
citecolor=red,linkcolor=blue,linktocpage,pdfpagelabels,
bookmarksnumbered,bookmarksopen]{hyperref}

\usepackage{mathtools}

\usepackage[hyperpageref]{backref}
\usepackage[english]{babel}
\usepackage[left=2.6cm,right=2.6cm,top=2.9cm,bottom=2.9cm]{geometry}
\usepackage{tensor}
\usepackage{tikz,pgfplots}
\pgfplotsset{compat=1.18}
\usepackage{cancel}
\usepackage{esint}
\newtheorem{theorem}{Theorem}
\newtheorem{lemma}[theorem]{Lemma}
\newtheorem{corollary}[theorem]{Corollary}
\newtheorem{proposition}[theorem]{Proposition}
\newtheorem{remark}[theorem]{Remark}

\newtheorem{theoremletter}{Theorem}

\newcommand{\innerthmname}{}

\theoremstyle{definition}

\makeatletter
\def\namedlabel#1#2{\begingroup
	#2%
	\def\@currentlabel{#2}%
	\phantomsection\label{#1}\endgroup
}

\newcommand*\owedge{\mathpalette\@owedge\relax}
\newcommand*\@owedge[1]{%
	\mathbin{%
		\ooalign{%
			$#1\m@th\bigcirc$\cr
			\hidewidth$#1\m@th\wedge$\hidewidth\cr
		}%
	}%
}
\makeatother

\newcommand{\ud}{\,\mathrm{d}}

\newcommand{\R}{\mathbf{R}}

\newcommand{\N}{\mathbf{N}}
\newcommand{\Ss}{\mathbf{S}}

\definecolor{TODOcolor}{RGB}{200,50,50}   
\definecolor{DONEcolor}{RGB}{40,150,40}    

\newif\ifdraft
\draftfalse  

\ifdraft
  \usepackage[colorinlistoftodos,prependcaption,textsize=footnotesize]{todonotes}
\else
  \usepackage[disable]{todonotes} 
\fi

\ifdraft
  \usepackage{pdfcomment} 
\else
  \usepackage[final]{pdfcomment} 
\fi

\title[Quantitative stability for the Trudinger--Moser inequality]{Quantitative stability for the Trudinger--Moser inequality} 
\thanks{J.H.A.\ was partially supported by the S\~ao Paulo Research Foundation (FAPESP), grant \#2023/15567-8, and by the National Council for Scientific and Technological Development (CNPq), grants \#409764/2023-0, \#443594/2023-6, \#441922/2023-6, and \#306014/2025-4.
J.F.O.\ was partially supported by CNPq, grants \#309491/2021-5 and \#303443/2025-1.
J.M.O.\ was partially supported by CNPq, grants \#312340/2021-4, \#409764/2023-0, and \#443594/2023-6, by CAPES MATH AMSUD, grant \#88887.878894/2023-00, and by the Para\'iba State Research Foundation (FAPESQ), grant \#3034/2021.
A.C.M.\ was partially supported by CNPq, grants \#443594/2023-6 and \#441922/2023-6.
J.R.\ was partially supported by the Deutsche Forschungsgemeinschaft (DFG), grant \#561401741.}

\author[J.H. Andrade]{Jo\~{a}o Henrique Andrade}
\author[J.F. de Oliveira]{Jos\'e Francisco de Oliveira}
\author[J.M. do \'O]{Jo\~{a}o Marcos do \'O}
\author[A. C. Macedo]{Abiel Costa Macedo}
\author[J. Ratzkin]{Jesse Ratzkin}

\address[J.H. Andrade]{
	Institute of Mathematics and Statistics,
	University of S\~ao Paulo
	\newline\indent
	05508-090, S\~ao Paulo-SP, Brazil
	}
\email{\href{mailto:andradejh@ime.usp.br}{andradejh@ime.usp.br}}

\address[J.F. de Oliveira]{
	Department of Mathematics,
	Federal University of Piau\'{\i}
	\newline\indent
	64049-550 - Teresina, PI - Brazil
	}
\email{\href{mailto:jfoliveira@ufpi.edu.br}{jfoliveira@ufpi.edu.br}}

\address[J.M. do \'O]{
	Department of Mathematics,
	Federal University of Para\'{\i}ba
	\newline\indent
	58051-900 - Jo\~ao Pessoa, PB - Brazil
	}
\email{\href{mailto:jmbo@academico.ufpb.br}{jmbo@academico.ufpb.br}}

\address[A. C. Macedo]{
	Institute of Mathematics and Statistics,
	Federal University of Goi\'as
	\newline\indent
	74690-900 - Goi\~ania, GO - Brazil
	}
\email{\href{mailto:abiel@ufg.br}{abiel@ufg.br}}

\address[J. Ratzkin]{Department of Mathematics,
	Universit\"{a}t W\"{u}rzburg
	\newline\indent
	97070, W\"{u}rzburg-BA, Germany}
\email{\href{mailto:jesse.ratzkin@mathematik.uni-wuerzburg.de}{jesse.ratzkin@mathematik.uni-wuerzburg.de}}

\subjclass[2020]{35B35, 35J20, 35J60, 46E35, 35B45}
\keywords{Trudinger--Moser inequality; quantitative stability; Bianchi--Egnell inequality; nonlinear eigenvalue problems}

\begin{document} 

\begin{abstract}
We establish quantitative stability estimates for the Trudinger--Moser inequality on smooth, 
bounded domains of the Euclidean plane. More 
specifically, we prove that the deficit in the Trudinger--Moser functional quadratically controls the distance 
to the set of optimizers if either (i) the exponential rate of growth is sufficiently small 
or (ii) the domain is a 
round disk. The latter estimate remains valid even in the critical case. Both proofs rely on 
a new spectral gap that we prove, which may be of independent 
interest. Additionally, we show the same stability estimate in the nondegenerate case, and show this 
occurs generically.  
\end{abstract}

\maketitle

\section{Introduction}

Let $n\geq 2$ be a natural number. The classical Sobolev embedding theorem 
states that for each $q \in (1,n)$ the Sobolev 
space $W^{1,q}(\R^n)$ embeds into $L^{\frac{nq}{n-q}}(\R^n)$. Trudinger \cite{Trudinger67}
showed that this embedding fails in the critical case $q=n$, but that functions 
in $W^{1,n}_0(\Omega)$ are of exponential type for any bounded domain $\Omega\subseteq\mathbf{R}^n$ 
with smooth boundary $\partial\Omega\in \mathcal{C}^\infty$. Moser \cite{Moser1970/71} later 
sharpened this result, determining the 
best possible rate of exponential growth. The combination of these two 
results is now widely known as the Trudinger--Moser inequality, which states 
\begin{equation} \label{moser_tru_ineq} 
{\rm TM} (n, \alpha, \Omega) = \sup_{\substack{u \in W^{1,n}_0(\Omega)\\ 
\| \nabla u \|_{L^n (\Omega)}\leq 1}} \left \{ \mathscr{E}_{n, \alpha, \Omega} 
(u) = \int_\Omega e^{\alpha u^{\frac{n}{n-1}}} \ud x \right \} < \infty \Leftrightarrow 
0 \leq \alpha \leq \alpha_*(n) = n \omega_{n-1}^{n-1}.
\end{equation}
Here $\omega_{n-1} = |\Ss^{n-1}|$ is the surface measure of the $(n-1)$-dimensional sphere. Observe 
that we implicitly define the Trudinger--Moser functional $\mathscr{E}_{n,\alpha,\Omega}$ 
in \eqref{moser_tru_ineq}.

For any given $\Omega \subset \R^n$, 
the set of extremal functions ({\it i.e.,} those functions realizing equality 
in \eqref{moser_tru_ineq}) is nonempty. Carleson and Chang \cite{Carleson-Chang}
demonstrated the existence of extremal functions in the case that $\Omega$ 
is a two-dimensional disk. Subsequently, Flucher \cite{Flucher1992} extended 
the existence result to any smooth, bounded domain in $\R^2$, and finally 
Lin \cite{Lin1996} proved the set of extremals is nonempty in 
any dimension $n \geq 2$. However, the precise form of the extremal 
functions remains unresolved, even in the case of the disk.

\subsection{Main results} In the present manuscript, we investigate the 
stability of \eqref{moser_tru_ineq}
in dimension two. More precisely, we are interested in quantifying the statement 
that if $u \in W^{1,2}_0(\Omega)$ with $\| \nabla u \|_{L^2(\Omega)} \leq 1$ 
is close to realizing the supremum in \eqref{moser_tru_ineq}, then it 
must be close to an extremal function. We measure the distance to the set of 
extremals as 
\begin{equation} \label{dist_defn} 
{\rm dist}_{\alpha,\Omega}(u):=\inf_{v\in\mathcal{M}_\alpha(\Omega)}
\|\nabla u-\nabla v\|_{L^2(\Omega)}, \quad {\rm for} \quad u\in W^{1,2}_0(\Omega),
\end{equation} 
where $\mathcal{M}_\alpha(\Omega)$ is the set of functions realizing equality 
in \eqref{moser_tru_ineq}. Notice that we measure the distance in $W^{1,2}_0(\Omega)$ 
as the $L^2$-norm of the gradient; by the Poincar\'e inequality this is equivalent 
to the standard $W^{1,2}$-norm. We also define the deficit 
\begin{equation}\label{deficit_defn} 
{\rm def}_{\alpha,\Omega}(u):={\rm TM}(\alpha,\Omega)-\mathscr{E}_{\alpha,\Omega}(u) 
\end{equation} 
and observe that in two dimensions the critical value of $\alpha$ 
is $\alpha_*(2) = 4\pi$. We denote the unit ball in $W^{1,2}_0(\Omega)$ 
by $\mathcal{B}_1$ and the unit sphere in $W^{1,2}_0(\Omega)$ by $\mathcal{S}_1$, 
once again with respect to the norm $\| \nabla \cdot \|_{L^2(\Omega)}$; that is,
\[
\mathcal{B}_1 = \{ u \in W^{1,2}_0(\Omega): \| \nabla u \|_{L^2(\Omega)}  < 1 \}
\]
and the unit sphere
\[
\mathcal{S}_1 = \{ u \in W^{1,2}_0(\Omega) : \| \nabla u \|_{L^2(\Omega)} = 1\}.
\]
Note that, since $\mathscr{E}_{\alpha,\Omega}$ is strictly increasing along radial 
rescalings $u\mapsto u/\|\nabla u\|_{L^2(\Omega)}$, every optimizer satisfies 
$\|\nabla u\|_{L^2(\Omega)}=1$, so that $\mathcal{M}_\alpha(\Omega)\subset\mathcal{S}_1$ 
(see \eqref{maxB} below).

Our first result states that when $\alpha$ is sufficiently small, the deficit 
is bounded below by the square of the distance. 
\begin{theorem}[Small subcritical case] \label{thm:subcritical_stability} 
Let $\Omega \subset \R^2$ be an open, bounded set with a smooth boundary.
There exists $\alpha_0>0$ such that for all $\alpha \in (0,\alpha_0)$, one can find 
a constant $C_*=C_*({\alpha},\Omega)>0$ such that 
\begin{equation*}
{\rm TM}({\alpha},\Omega) - \mathscr{E}_{\alpha, \Omega} (u) \geq  C_* \inf_{v \in 
\mathcal{M}_\alpha(\Omega)} {\| \nabla u - \nabla v \|_{L^2(\Omega)}^2} 
\quad {\rm for \ all} \quad u\in \overline{\mathcal{B}}_1.
\end{equation*} 
In other words, one has
\[
{\rm def}_{\alpha,\Omega}(u)\gtrsim_{\alpha,\Omega}{\rm dist}_{\alpha,\Omega}(u)^2.
\]
\end{theorem}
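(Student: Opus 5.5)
We follow the standard Bianchi--Egnell scheme: a local quadratic estimate near the set of maximizers, then a compactness argument to extend it globally. For small $\alpha$ the functional $\mathscr{E}_{\alpha,\Omega}$ is a small perturbation of $|\Omega|+\alpha\int_\Omega u^2\ud x$. Expanding,
\[
\mathscr{E}_{\alpha,\Omega}(u)=|\Omega|+\alpha\|u\|_{L^2}^2+\tfrac{\alpha^2}{2}\|u\|_{L^4}^4+O(\alpha^3),
\]
uniformly on $\overline{\mathcal{B}}_1$ by the Trudinger--Moser inequality at a fixed subcritical level. So maximizers for small $\alpha$ should be close to $\pm\varphi_1/\|\nabla\varphi_1\|_{L^2}$, where $\varphi_1$ is the first Dirichlet eigenfunction of $\Omega$.

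\textbf{Step 1 (structure of $\mathcal{M}_\alpha(\Omega)$).} Any maximizer $u_\alpha\in\mathcal{S}_1$ satisfies the Euler--Lagrange equation
\[
-\Delta u=\lambda^{-1}\,u\,e^{\alpha u^2},\qquad \lambda=\int_\Omega u^2e^{\alpha u^2}\ud x .
\]
As $\alpha\to0$, elliptic estimates give $u_\alpha\to\pm\varphi_1$ (normalized) in $\mathcal{C}^{2}$, because $\varphi_1$ uniquely maximizes $\int u^2$ on $\mathcal{S}_1$ up to sign.

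\textbf{Step 2 (spectral gap).} I expect this to be the main step, and I would take it from the paper's spectral-gap result. Linearize the constrained problem at $u_\alpha$. The Hessian of $\mathscr{E}$ restricted to the tangent space $T_{u_\alpha}\mathcal{S}_1$ is a perturbation of order $o(\alpha)$ of
\[
Q_0(w)=2\alpha\Bigl(\int_\Omega w^2\ud x-\lambda_1^{-1}\int_\Omega|\nabla w|^2\ud x\Bigr),\qquad w\perp\varphi_1 .
\]
Here $Q_0(w)\le -2\alpha(1-\lambda_1/\lambda_2)\lambda_1^{-1}\|\nabla w\|_{L^2}^2$, which gives a uniform gap. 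The perturbation bound requires controlling $\int w^2u_\alpha^2e^{\alpha u_\alpha^2}$ terms, which is fine since $u_\alpha$ is bounded. It follows that $u_\alpha$ is nondegenerate. By the implicit function theorem, for small $\alpha$ the set $\mathcal{M}_\alpha$ consists exactly of the two points $\pm u_\alpha$, and the Hessian on the tangent space is at most $-c\alpha\|\nabla w\|^2$.

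\textbf{Step 3 (local estimate).} For $u\in\mathcal{S}_1$ near $u_\alpha$, write $u=(u_\alpha+w)/\|\nabla(u_\alpha+w)\|$ with $w\perp u_\alpha$. Taylor expansion gives
\[
{\rm def}(u)\ge c\alpha\|\nabla w\|^2-C\|\nabla w\|^3 .
\]
The cubic remainder is controlled via Hölder together with the uniform Trudinger--Moser bound on $e^{p\alpha u^2}$ for $p\alpha<4\pi$. Moreover $\|\nabla w\|\sim{\rm dist}(u)$.

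To pass from $\overline{\mathcal{B}}_1$ to $\mathcal{S}_1$, set $t=\|\nabla u\|\le1$. Then
\[
\mathscr{E}(u/t)-\mathscr{E}(u)\ge c(1-t)
\]
for $u$ near $\mathcal{M}_\alpha$, since the radial derivative equals $2\alpha\int u^2e^{\alpha u^2}>0$. Also ${\rm dist}(u)\le {\rm dist}(u/t)+(1-t)$. Combining these gives the local estimate on the ball.

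\textbf{Step 4 (global, by contradiction).} Suppose $u_k\in\overline{\mathcal{B}}_1$ with ${\rm def}(u_k)/{\rm dist}(u_k)^2\to0$. Since ${\rm dist}\le2$, the deficit tends to zero, so $(u_k)$ is a maximizing sequence. For subcritical $\alpha$, $\mathscr{E}$ is weakly continuous on $\overline{\mathcal{B}}_1$ by Vitali's theorem and uniform integrability. Hence $u_k\rightharpoonup u$ with $\mathscr{E}(u)={\rm TM}$, so $u\in\mathcal{M}_\alpha\subset\mathcal{S}_1$. Then $\|\nabla u_k\|\to1=\|\nabla u\|$, which upgrades to strong convergence, so ${\rm dist}(u_k)\to0$. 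This contradicts Step 3.
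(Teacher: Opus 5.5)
Your proposal is correct. It uses the same ingredients as the paper: a Taylor expansion at a maximizer with an $O(\|\nabla w\|_{L^2}^3)$ remainder, a constrained-Hessian gap that reduces as $\alpha\to0$ to the Dirichlet gap $\mu_1<\mu_2$ (your $\lambda_1<\lambda_2$), compactness of maximizing sequences, and radial monotonicity to pass from $\mathcal{S}_1$ to $\overline{\mathcal{B}}_1$. You assemble them differently, though.

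\textbf{The paper's route.} It argues by contradiction on $\mathcal{S}_1$.
\begin{enumerate}
\item It expands in the ambient difference $w_k=u_k-u_*$. The first-order term becomes $-\frac{\alpha}{\Lambda_1(\alpha)}\|\nabla w_k\|_{L^2}^2$ via $\|\nabla w\|_{L^2}^2=-2\int_\Omega\nabla u_*\cdot\nabla w\,\ud x$.
\item It extracts a limit $\widehat w_*\in T_{u_*}\mathcal{S}_1$ of the normalized deviations, with $\|\nabla\widehat w_*\|_{L^2}=1$ and $\mathscr{Q}_{\alpha,\Omega}(\widehat w_*,\widehat w_*)=0$.
\item It excludes this limit with a coercivity lemma. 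That lemma uses the weighted eigenvalues $\lambda_1(\alpha)<\Lambda_1(\alpha)<\lambda_2(\alpha)$, obtained by sandwiching Rayleigh quotients with $\|u_*\|_{L^\infty}\le M$. It also uses the splitting $w=tv_*+w^\perp$, with $t$ controlled through the tangency constraint.
\end{enumerate}

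\textbf{Your route.} You prove a direct local estimate in a chart of $\mathcal{S}_1$. You get the gap by perturbing the $\alpha=0$ constrained Hessian of $\int_\Omega u^2\,\ud x$ at $\varphi_1$. This needs the stronger input that all maximizers converge to $\pm\varphi_1$, which your Step 1 supplies. The paper tacitly needs the same input when it asserts $C_0(\alpha)\to C_0(0)$.

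\textbf{What each buys.} Your route gives an explicit local constant of order $\alpha$ and a concrete picture of $\mathcal{M}_\alpha(\Omega)$. Your passage to the ball uses ${\rm def}_{\alpha,\Omega}(u)={\rm def}_{\alpha,\Omega}(u/t)+[\mathscr{E}_{\alpha,\Omega}(u/t)-\mathscr{E}_{\alpha,\Omega}(u)]$ together with $\mathscr{E}_{\alpha,\Omega}(u/t)-\mathscr{E}_{\alpha,\Omega}(u)\ge\alpha(1-t^2)\|u/t\|_{L^2}^2$. This is local and cleaner than the case analysis in Lemma~\ref{lm:sphere_to_ball}. The paper's route needs only strict positivity of $\mathscr{Q}_{\alpha,\Omega}$ on $T_{u_*}\mathcal{S}_1\setminus\{0\}$ and no chart.

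\textbf{Two points to tighten.}
\begin{enumerate}
\item In Step 2 your bound on $Q_0$ is stated for $w\perp\varphi_1$, but you need it on $T_{u_\alpha}\mathcal{S}_1$. Write $w=s\varphi_1+w'$ with $w'\perp\varphi_1$. Note that $Q_0$ vanishes along $\varphi_1$ and has no cross term with $w'$, since $-\Delta\varphi_1=\mu_1\varphi_1$. Then use $u_\alpha\to\varphi_1$ in $W^{1,2}_0(\Omega)$ in the constraint $\int_\Omega\nabla u_\alpha\cdot\nabla w\,\ud x=0$ to get $|s|=o(1)\|\nabla w'\|_{L^2}$.
\item The claim $\mathcal{M}_\alpha(\Omega)=\{\pm u_\alpha\}$ is not needed, since you only use ${\rm dist}_{\alpha,\Omega}(u)\le\|\nabla(u-u_\alpha)\|_{L^2}\lesssim\|\nabla w\|_{L^2}$. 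If you keep it, apply the implicit function theorem to the rescaled functional $(\mathscr{E}_{\alpha,\Omega}-|\Omega|)/\alpha$, because $\mathscr{E}_{0,\Omega}\equiv|\Omega|$ is constant.
\end{enumerate}
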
 

Our second main result replaces the small growth hypothesis in Theorem \ref{thm:subcritical_stability} 
with nondegeneracy.  
We recall that the quantitative behavior of the functional near its maximizers 
is governed by the second variation of $\mathscr{E}_{\alpha,\Omega}$ on the unit sphere 
$\mathcal{S}_1$. We say that a maximizer $u_*$ is \emph{nondegenerate} if the second 
variation of the Trudinger-Moser functional is coercive on $T_{u_*} (\mathcal{S}_1)$. 
More precisely, $u_*$ is nondegenerate if there exists $\mu>0$ such that 
$$\mathscr{Q}_{\alpha, \Omega}(u_*)[w,w] \geq \mu \| \nabla w \|_{L^2(\Omega)} 
\quad \mathrm{for each}\quad w \in T_{u_*}(\mathcal{S}_1),$$
where $\mathscr{Q}_{\alpha,\Omega}(u_*)$ is the quadratic form given by \eqref{eq:weighted-ip}.
\begin{theorem}[Nondegenerate subcritical case]\label{thm:subcritical_stability_nondeg}
Let $\Omega \subset \R^2$ be an open, bounded set with a smooth boundary and $\alpha\in(0,4\pi)$.
If every optimizer is nondegenerate, then there exists $C^*=C^*(\alpha,\Omega)>0$ such that
\[
{\rm TM}({\alpha},\Omega) - \mathscr{E}_{\alpha, \Omega} (u) \geq  C^*\inf_{v \in
\mathcal{M}_\alpha(\Omega)} {\| \nabla u - \nabla v \|_{L^2(\Omega)}^{2}} \quad 
{\rm for \ all} \quad u\in\overline{\mathcal{B}}_1.
\]
In other words, one has
\[
{\rm def}_{\alpha,\Omega}(u) \geq C^*{\rm dist}_{\alpha,\Omega}(u)^{2}.
\]
Moreover, there exists an open dense set of parameters $(\alpha,\Omega)$ for which 
all optimizers in $\mathcal{M}_\alpha(\Omega)\cap\overline{\mathcal{B}}_1$ are nondegenerate.
\end{theorem}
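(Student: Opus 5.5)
The strategy is the standard Bianchi--Egnell two-step argument: a local quadratic estimate near each optimizer, upgraded to a global one by compactness. Afterwards a transversality argument handles the genericity statement.

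\emph{Step 1: structure of $\mathcal{M}_\alpha(\Omega)$.} Since $\alpha<4\pi$ is subcritical, the map $u\mapsto e^{\alpha u^2}$ is compact from $W^{1,2}_0(\Omega)$ into $L^1(\Omega)$ on bounded sets, by Trudinger--Moser with a slightly larger exponent and Vitali. Hence $\mathscr{E}_{\alpha,\Omega}$ is weakly continuous on $\overline{\mathcal{B}}_1$, and $\mathcal{M}_\alpha(\Omega)\subset\mathcal{S}_1$ is nonempty and compact in the strong topology.

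Let $u_*$ be an optimizer. Nondegeneracy says the Hessian of $\mathscr{E}_{\alpha,\Omega}$ restricted to $\mathcal{S}_1$ is coercive on $T_{u_*}\mathcal{S}_1$, so $u_*$ is an isolated critical point on the sphere. By compactness, $\mathcal{M}_\alpha(\Omega)$ is therefore finite, say $\{u_1,\dots,u_N\}$; note that $-u_i$ is also an optimizer.

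\emph{Step 2: local estimate.} For $u\in\mathcal{S}_1$ close to $u_i$, write $u=(u_i+w)/\|\nabla(u_i+w)\|_{L^2(\Omega)}$ with $w\in T_{u_i}\mathcal{S}_1$ small; equivalently, use the exponential map of the sphere. The function $\mathscr{E}_{\alpha,\Omega}$ is $C^2$ on $W^{1,2}_0(\Omega)$ for $\alpha<4\pi$, with a locally uniformly continuous second derivative $\int 2\alpha(1+2\alpha u^2)e^{\alpha u^2}v^2$. This continuity follows from Hölder's inequality and the uniform exponential integrability of $u$ near $u_i$.

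A Taylor expansion then gives
\[
{\rm def}_{\alpha,\Omega}(u)\ge \tfrac12\mathscr{Q}_{\alpha,\Omega}(u_i)[w,w]-o(\|\nabla w\|_{L^2(\Omega)}^2)\ge \tfrac{\mu}{4}\|\nabla w\|^2_{L^2(\Omega)},
\]
which is comparable to ${\rm dist}^2$. (I read the coercivity hypothesis as being quadratic in $\|\nabla w\|$.) For $u\in\overline{\mathcal{B}}_1$ with $\|\nabla u\|<1$, set $t=\|\nabla u\|_{L^2(\Omega)}$. Monotonicity along rays gives
\[
{\rm def}(u)\ge{\rm def}(u/t)+\big(\mathscr{E}(u/t)-\mathscr{E}(u)\big),
\]
and the second term is $\gtrsim (1-t)\gtrsim(1-t)^2$ by the derivative in the radial direction. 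Combining this with the triangle inequality $\|\nabla(u-u/t)\|=1-t$ handles the radial part.

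\emph{Step 3: globalization.} Suppose the global inequality fails along a sequence $u_k\in\overline{\mathcal{B}}_1$. Since ${\rm dist}\le 2$, we get ${\rm def}(u_k)\to0$. Pass to a weak limit $u$. By weak continuity, $u\in\mathcal{M}_\alpha(\Omega)\subset\mathcal{S}_1$, so norms converge and the convergence is strong. This contradicts Step 2 near $u$.

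\emph{Step 4: genericity.} This is the main obstacle. I would use a Smale--Sard--Quinn/Uhlenbeck-type transversality argument. Consider the map
\[
F(u,\lambda,\alpha,\phi)=-\Delta u-\lambda\,\alpha\, u\,e^{\alpha u^2},
\]
the Euler--Lagrange operator, where domains are parametrized as $\phi(\Omega_0)$ by diffeomorphisms $\phi$ and pulled back to the fixed domain $\Omega_0$. One shows $F$ is a Fredholm map of index zero in $u$ and is a submersion in the full variables. The key point is that variations of the domain produce the boundary term $(\partial_\nu u)(X\cdot\nu)$, which can be chosen to kill any kernel element $v$: if $\partial_\nu u\,\partial_\nu v\equiv0$ on $\partial\Omega$, then Hopf's lemma ($\partial_\nu u\neq0$ for positive optimizers) plus unique continuation forces $v=0$.

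Sard--Smale then gives a residual set of parameters for which all critical points on the sphere are nondegenerate. Openness follows from compactness of the maximizers (Step 1, uniformly in parameters) together with stability of coercivity under small perturbations. Density of an open set containing the residual one completes the argument. The delicate points are exactly this boundary-perturbation surjectivity and the passage from nondegeneracy of the Lagrange problem to coercivity on $T_{u_*}\mathcal{S}_1$, using that $u_*$ maximizes so the Hessian is already nonpositive in the appropriate sign.
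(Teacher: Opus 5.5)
Your Steps 1--3 follow the paper's route for the stability part and are sound:
\begin{itemize}
\item a Taylor expansion at an optimizer combined with the coercivity hypothesis gives the local quadratic bound;
\item compactness of maximizing sequences globalizes it;
\item monotonicity along rays passes from $\mathcal{S}_1$ to $\overline{\mathcal{B}}_1$.
\end{itemize}
In fact they are more careful than the paper. The paper obtains the local bound through $1/\Lambda_1(\alpha)-1/\lambda_1(\alpha)$, which is negative since $\lambda_1(\alpha)<\Lambda_1(\alpha)$, and only for small $\alpha$. It also dismisses the region away from $\mathcal{M}_\alpha(\Omega)$ as ``trivial'' without the compactness argument you give.

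The genuine gap is in Step 4. Your map $F(u,\lambda,\alpha,\phi)=-\Delta u-\lambda\alpha u e^{\alpha u^2}$ carries no sphere constraint, so Sard--Smale for it controls the wrong kernel. At best it yields invertibility of $L:=-\Delta-\Lambda_1(\alpha)(1+2\alpha u_*^2)e^{\alpha u_*^2}$ at every solution. Even that holds only for generic $\lambda$, whereas in the Trudinger--Moser problem the multiplier is fixed by $(\alpha,\Omega)$. If instead $\lambda$ is an unknown, the index is one and regular values only give curves of solutions.

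The bridge you propose, from this to coercivity on $T_{u_*}\mathcal{S}_1$ ``using that $u_*$ maximizes'', fails. Put $g:=-\Delta u_*$ and suppose $L$ is invertible. Then $h:=L^{-1}g\neq 0$ satisfies $\mathscr{Q}_{\alpha,\Omega}(h,v)=\int_\Omega\nabla u_*\cdot\nabla v\,\ud x=0$ for every $v\in T_{u_*}\mathcal{S}_1$. Moreover $h\in T_{u_*}\mathcal{S}_1$ exactly when $\langle L^{-1}g,g\rangle=0$. Maximality only gives $\mathscr{Q}_{\alpha,\Omega}\ge 0$ on $T_{u_*}\mathcal{S}_1$, which is compatible with this borderline case. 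So invertibility of $L$ does not give the nondegeneracy the theorem needs.

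The fix is to build the constraint into the map, as the paper does with $\mathfrak{F}(\alpha,\psi;v,\Lambda)=(D_v\mathcal{E}_{\alpha,\psi}(v)-\Lambda D_v\mathcal{A}_\psi(v),\,\mathcal{A}_\psi(v)-1)$. Its $(v,\Lambda)$-linearization is bijective precisely when the constrained form has trivial kernel. The paper, however, never checks that the \emph{full} derivative of $\mathfrak{F}$ is onto at its zeros. Fredholmness of the partial derivative alone does not make $\mathfrak{F}^{-1}(0)$ a Banach manifold, so your domain-variation argument is exactly the ingredient the paper's sketch is missing.

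In the constrained setting that argument must be supplemented. A kernel element (equivalently, by symmetry, a cokernel element) is a pair $(h,\ell)$ with $h\in T_{u_*}\mathcal{S}_1$ and $Lh$ a multiple of $g$ proportional to $\ell$. When $\ell\neq0$ the equation for $h$ is inhomogeneous, so vanishing Cauchy data plus unique continuation no longer forces $h=0$. The $\alpha$-direction handles this case:
\begin{itemize}
\item Since $\int_\Omega e^{\alpha u_*^2}u_*h\,\ud x=0$ on the tangent space, the pairing of $\partial_\alpha\mathfrak{F}$ with $(h,\ell)$ is $2\alpha\int_\Omega u_*^3e^{\alpha u_*^2}h\,\ud x$.
\item From $Lu_*=-2\alpha\Lambda_1(\alpha)u_*^3e^{\alpha u_*^2}$ and symmetry of $L$, this pairing equals $-\langle Lh,u_*\rangle/\Lambda_1(\alpha)$, which is a nonzero multiple of $\ell$.
\item Hence $\ell=0$, so $Lh=0$, and your Hopf/unique-continuation argument finishes.
\end{itemize}

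Your openness argument is correct: compactness of optimizers uniformly in the parameters, plus persistence of coercivity under perturbation. It is better suited to a statement about optimizers than the paper's openness lemma. That lemma concerns all critical points and ignores new ones that may appear under perturbation.
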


Our last main result is a quantitative stability estimate for the full range 
$0< \alpha \leq 4\pi$ in the case that 
\[
\Omega = \mathbf{D} = \{ x \in \mathbf{R}^2 : |x| < 1 \}.
\]  
\begin{theorem}[Symmetric critical case] \label{thm:critical_stability}
Let $\mathbf{D} \subset \R^2$ be the open $($unit$)$ disk and $\alpha \in (0,4\pi]$.
There exists a constant $C_\circ=C_\circ(\alpha)>0$ satisfying
\begin{equation*} 
{\rm TM}({\alpha},\mathbf{D}) - \mathscr{E}_{\alpha, \mathbf{D}} (u) \geq  C_\circ \inf_{v \in
\mathcal{M}_\alpha(\mathbf{D})} {\|\nabla u - \nabla v \|_{L^2(\mathbf{D})}^2} \quad 
{\rm for \ all} \quad u\in \overline{\mathcal{B}}_1.
\end{equation*} 
In other words, one has
\[
{\rm def}_{\alpha,\mathbf{D}}(u)\gtrsim_{\alpha}{\rm dist}_{\alpha,\mathbf{D}}(u)^2.
\]
\end{theorem}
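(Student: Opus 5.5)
The plan is a local-to-global argument on the disk, using radial symmetry of maximizers and a spectral gap in the radial class.

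\textbf{Reduction to the sphere.} First reduce to $u\in\mathcal{S}_1$. If $\|\nabla u\|_{L^2}=t<1$, set $\hat u=u/t$. Then $\mathscr{E}_{\alpha,\mathbf{D}}(u)\le \mathscr{E}_{\alpha,\mathbf{D}}(\hat u)-c(1-t)$, where $c$ comes from the strict radial monotonicity recorded in \eqref{maxB}; explicitly $\tfrac{d}{ds}\mathscr{E}(s\hat u)\ge 2\alpha s\int \hat u^2 e^{\alpha s^2\hat u^2}$. This derivative is bounded below unless $\hat u$ is small, and that case is handled trivially since the deficit is then of order one. Moreover $\|\nabla u-\nabla\hat u\|=1-t$, and $1-t\gtrsim(1-t)^2$. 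So it suffices to treat $u\in\mathcal{S}_1$.

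\textbf{Global step.} By the Carleson--Chang compactness (for $\alpha=4\pi$), together with the uniqueness up to sign of radial maximizers, any sequence with deficit tending to zero converges strongly in $W^{1,2}_0$ to $\mathcal{M}_\alpha(\mathbf{D})$. Concentrating sequences have $\limsup\mathscr{E}\le\pi(1+e)<{\rm TM}(4\pi,\mathbf{D})$, so they are excluded. Hence if ${\rm dist}\ge\delta$, the deficit is at least some $\eta(\delta)>0$, and since distances are bounded by $2$ the estimate holds there. It remains to handle $u$ near some $v\in\mathcal{M}_\alpha$.

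\textbf{Local step.} By symmetrization (P\'olya--Szeg\H{o}), maximizers are radial, and I would invoke the spectral gap proved earlier in the paper for the linearized operator at $v$. Write $u=\cos\theta\, v+\sin\theta\, w$ with $w\in T_v\mathcal{S}_1$ and $\|\nabla w\|=1$. Taylor expansion gives
\[
\mathscr{E}(u)={\rm TM}-\tfrac{\theta^2}{2}\Big(\lambda\|\nabla w\|^2-\mathscr{Q}_{\alpha,\mathbf{D}}(v)[w,w]\Big)+O(\theta^3).
\]
Here $\lambda$ is the Lagrange multiplier. The cubic remainder is controlled uniformly because $v$ is bounded and $e^{\alpha u^2}\in L^p$ for some $p>1$ once $u$ is near $v$: by Lions' concentration-compactness improvement, $\int e^{p\alpha u^2}$ is bounded for $u$ near a non-concentrating limit, even at $\alpha=4\pi$.

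The spectral gap should state the following: the form $\lambda\|\nabla w\|^2-\mathscr{Q}(v)[w,w]$ is coercive on $T_v\mathcal{S}_1$ modulo the kernel directions tangent to $\mathcal{M}_\alpha$. On the disk these are only the trivial ones, since rotations fix radial $v$. Split $w$ into Fourier modes. For nonradial modes $k\ge1$, the weight $r^{-2}k^2$ added to the radial operator pushes eigenvalues above $\lambda$, which yields the gap. For the radial mode, use the gap from the earlier theorem. Choosing $v$ as the nearest maximizer makes $w$ orthogonal to any degenerate directions, and this gives $\mathrm{def}\ge c\theta^2\gtrsim{\rm dist}^2$.

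\textbf{Main obstacle.} The hardest part is the radial spectral gap at $\alpha=4\pi$, i.e.\ nondegeneracy of the radial maximizer. Here I rely on the paper's spectral gap result. My first attempt at proving it would be ODE/Sturm comparison on the linearized radial equation, which shows that the second radial eigenvalue exceeds $\lambda$. A secondary difficulty is the uniform Taylor remainder in the critical case.
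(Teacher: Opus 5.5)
Your overall architecture is the right one: reduce to $\mathcal S_1$, use compactness away from $\mathcal M_\alpha(\mathbf D)$, then Taylor-expand and use coercivity of the constrained Hessian near a maximizer. Two pieces are tighter than in the paper. Your reduction via $\mathscr E_{\alpha,\mathbf D}(\hat u)-\mathscr E_{\alpha,\mathbf D}(t\hat u)\ge\alpha(1-t^2)\|\hat u\|_{L^2}^2$ is more direct than Lemma~\ref{lm:sphere_to_ball}. Your ${\rm dist}\ge\delta$ step supplies something the paper's proof of Theorem~\ref{thm:critical_stability} skips, since that proof only treats $\|\nabla w\|_{L^2}\ll1$.

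Three side remarks. First, uniqueness of the maximizer is not known and not needed in the compactness step; Lemma~\ref{lm:compactness} suffices. Second, P\'olya--Szeg\H{o} alone only shows that the rearrangement is a maximizer; radiality of every maximizer comes from Gidas--Ni--Nirenberg. Third, for the modes $k\ge1$, ``adding $k^2/r^2$ pushes eigenvalues above $\lambda$'' is not an argument, because $\lambda_1^{(0)}(\alpha)<\Lambda_1^\circ(\alpha)$. The actual mechanism is Lemma~\ref{lem:mode_gap_disk}: $\psi=\bar u_*'$ solves the $k=1$ equation at level $\Lambda_1^\circ(\alpha)$, is negative on $(0,1]$, and the Wronskian identity against the positive first $k=1$ eigenfunction forces $\lambda_1^{(1)}(\alpha)>\Lambda_1^\circ(\alpha)$. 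That part you can cite.

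The genuine gap is the radial sector, and it cannot be borrowed from the paper. All maximizers are radial and rotations fix them, so ``coercive modulo kernel directions tangent to $\mathcal M_\alpha$'' here just means nondegeneracy. A radial kernel vector would be tangent to $\mathcal M_\alpha$ only if $\mathcal M_\alpha$ contained a curve through $\bar u_*$. Otherwise, taking the nearest maximizer does not remove it, and uniqueness of $\bar u_*$ would not help either. So you need the paper's $\mathscr Q_{\alpha,\mathbf D}$ (your form up to the factor $2\alpha/\Lambda_1^\circ(\alpha)$) to be positive definite on $T_{\bar u_*}\mathcal S_1\cap W^{1,2}_{0,{\rm rad}}(\mathbf D)=\{w:\int_{\mathbf D}g\,w\ud x=0\}$, where $g=e^{\alpha\bar u_*^2}\bar u_*$, for all $\alpha$ up to $4\pi$.

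Your fallback, $\lambda_2^{(0)}(\alpha)>\Lambda_1^\circ(\alpha)$, is necessary but not sufficient; the non-strict version already follows from maximality. The hyperplane above is not the $\langle\cdot,\cdot\rangle_{\alpha,\mathbf D}$-orthogonal complement of $v_*$. Granting $\lambda_2^{(0)}>\Lambda_1^\circ$, the operator $L=-\Delta-\Lambda_1^\circ(\alpha)(1+2\alpha\bar u_*^2)e^{\alpha\bar u_*^2}$ is invertible on radial functions with exactly one negative direction. Then the constrained form is positive definite if and only if $\int_{\mathbf D}g\,L^{-1}g\ud x<0$, whereas maximality only gives $\le0$. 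Since $L^{-1}g=\partial_\Lambda u_\Lambda$ along the radial branch of $-\Delta u=\Lambda e^{\alpha u^2}u$ through $\bar u_*$, the condition reads $\tfrac{d}{d\Lambda}\|\nabla u_\Lambda\|_{L^2}^2<0$. That is a property of the solution curve, not something an eigenvalue or Sturm comparison delivers.

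The paper's Step~1 does not provide it either, for two reasons. It claims $\mathcal R_{\alpha,\mathbf D}\ge\lambda_2^{(0)}(\alpha)$ on this hyperplane by Courant--Fischer, but for any hyperplane other than the orthogonal complement of $v_*$, min--max only gives an infimum $\le\lambda_2^{(0)}(\alpha)$. It also infers the strict inequality $\lambda_2^{(0)}(\alpha)>\Lambda_1^\circ(\alpha)$ from the limit $\alpha\to0$, which says nothing at $\alpha=4\pi$. The only rigorous radial control in the paper is the $t\,v_*+w^\perp$ argument of Lemma~\ref{lem:Q_coercivity}, which requires $\alpha$ small. Without a proof of radial nondegeneracy of the maximizers, your local step, and hence the theorem for all $\alpha\in(0,4\pi]$, is not established.
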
 

\subsection{Comparison to other results} Historically, stability questions first arose in 
the classical Sobolev inequality. Brezis and Lieb \cite{MR790771} first asked if one can
quantify the Sobolev inequality, and Bianchi and Egnell \cite{BE} proved this 
is possible. Since then, many others have investigated the stability, both 
qualitative and quantitative, of a variety of functional inequalities, including 
higher-order Sobolev inequalities \cite{LW,CFW}, the Gagliardo--Nirenberg--Sobolev
and log Hardy--Littlewood--Sobolev inequalities \cite{CarlenFigalli2013}, $L^p$-Sobolev inequalities for
$p> 2$ \cite{FigalliZhang2022},
the Caffarelli--Kohn--Nirenberg inequality \cite{Wei_Wu, Frank_Peteranderl} and
the Hardy--Littlewood--Sobolev inequality \cite{CLT}. Dolbeault, Esteban, Figalli, Frank, and
Loss \cite{DEFFL2025} recently uncovered the sharp dimension dependence of the 
stability constants in the Sobolev and log-Sobolev inequalities. Engelstein, Neumayer,
and Spolaor \cite{MR4427104} extended the Bianchi--Egnell result to a more geometric
setting, proving a quantitative stability inequality for the conformal
Yamabe constant; see \cite{andrade2024quantitativestabilitytotalqcurvature} for
an extension of the Engelstein, Neumayer, and Spolaor result
to higher-order curvatures. Other related work investigates the best constants in
the Bianchi--Egnell inequality \cite{MR4623703} and the attainability of the stability 
inequality \cite{arxiv:2211.14185}. This list of recent results is far from complete; 
the reader can find a nice overview of this topic in \cite{FrankKonigTang2023}. 

Most of this previous work is quite different from ours, because in 
most cases the set of optimizers is well-known, which greatly simplifies 
the investigation of the behavior of the corresponding functional on 
nearly-optimal functions. As mentioned earlier, the precise form of the
Trudinger--Moser extremals remain unknown, and so we must use substantially different techniques. 

\subsection{Outline of the proofs} 
Our methods rely on several spectral gap inequalities, and so to 
begin we state the relevant eigenvalue problems. Let $\Omega \subset 
\R^2$ be an open, bounded domain with smooth boundary, and let 
$\alpha \in (0, 4\pi]$. The first relevant eigenvalue problem is the 
well-studied \emph{first Dirichlet eigenvalue}
\begin{equation}\label{eq:dirichlet_eigenvalue}
    \mu_1(\Omega) = \inf_{\psi \in W^{1,2}_0(\Omega)}\frac{\int_\Omega |\nabla \psi|^2 \ud x}
    {\int_\Omega \psi^2 \ud x },
\end{equation}
which is associated with the problem
\begin{equation}\label{eq:limiting_BVP}\tag{$\mathcal{P}_{0,\Omega}$}
		\begin{cases}
			-\Delta \psi  = \mu  \psi  & \mbox{in} \quad\Omega\\
			\psi=0 & \mbox{on} \quad\partial \Omega.
		\end{cases}    
\end{equation}

Now let $u_* \in \mathcal{M}_\alpha (\Omega) \cap \mathcal{S}_1$ be an 
optimizer of \eqref{moser_tru_ineq}, so that $\mathscr{E}_{\alpha, \Omega}(u_*) 
= {\rm TM}(\alpha, \Omega)$ and $\| \nabla u_* \|_{L^2(\Omega)} = 1$. We define
the first \emph{$\alpha$-regularized eigenvalue}
\begin{equation}\label{eq:alpha_eigenvalue}
\lambda_1(\alpha,\Omega) = \inf_{\phi \in W^{1,2}_0(\Omega)}\frac{\int_\Omega |\nabla \phi|^2 \ud x}
{\int_\Omega \phi^2(1+2\alpha u_*^2)e^{\alpha u_*^2} \ud x},
\end{equation}
associated with the linearized  problem 
\begin{equation}\label{eq:alpha_BVP}\tag{$\mathcal{P}_{\alpha,\Omega}$}
        \begin{cases}
			-\Delta \phi  = \lambda (1 + 2\alpha u_{*}^2) e^{\alpha u_{*}^2} \phi & \mbox{in} \quad\Omega\\
			\phi=0 & \mbox{on} \quad\partial \Omega.
		\end{cases}      
\end{equation}
We also define the (nonlinear) \emph{first Trudinger--Moser eigenvalue} as
\begin{equation}\label{eq:TM_eigenvalue}
    \Lambda_1(\alpha, \Omega) = \inf_{v \in W^{1,2}_0(\Omega)} 
    \frac{\int_\Omega |\nabla v|^2 \ud x}{\int_{\Omega}v^2e^{\alpha u_*^2}\ud x},
\end{equation}
associated with the nonlinear eigenvalue problem
\begin{equation}\label{eq:TM_BVP}\tag{$\mathcal{TM}_{\alpha,\Omega}$}
		 \begin{cases}
			-\Delta v = \Lambda  e^{\alpha u_{*}^2} v & \mbox{in} \quad \Omega\\
			v=0 & \mbox{on} \quad\partial \Omega.
		\end{cases} 
	\end{equation}

We denote the eigenvalues of \eqref{eq:alpha_BVP} and \eqref{eq:TM_BVP} by
\[
\{\lambda_j(\alpha,\Omega)\}_{j\in\mathbf{N}} \quad {\rm and} \quad \{\Lambda_j(\alpha,\Omega)\}_{j\in\mathbf{N}},
\]
respectively.
For simplicity, we omit the domain in the notation when no confusion can arise.
Also, when $\Omega=\mathbf{D}$ is the unit disk, we simply denote
\begin{equation}\label{eq:eigenvalue_linearized}
 \lambda_j(\alpha, \mathbf{D})=\lambda_j^\circ(\alpha) \quad {\rm and} 
 \quad \Lambda_j(\alpha,\mathbf{D})=\Lambda_j^\circ(\alpha) \quad {\rm for} \quad j\in\mathbf{N}.   
\end{equation}
In Lemma~\ref{lm:eigenvalue_convergence} we will show that 
\[\lim_{\alpha\to0}\lambda_1(\alpha,\Omega)=\lim_{\alpha\to0}\Lambda_1(\alpha,\Omega)=\mu_1(\Omega).
\]

The proof of Theorem~\ref{thm:subcritical_stability} 
proceeds by contradiction. We suppose no uniform coercivity constant 
exists, and so there is a violating sequence $\{ u_k\}_{k \in \mathbf{N}} \subset 
\mathcal{S}_1$ such that 
\begin{equation}\label{contra0}\tag{$\mathcal{H}_0$}
			\lim_{k\rightarrow \infty}\frac{ {\rm TM}(\alpha,\Omega) - 
            \mathscr{E}_{\alpha,\Omega}(u_k)}{\inf_{v \in \mathcal{M}_\alpha(\Omega)} 
            \|{\nabla (v-u_k)}\|_{L^2(\Omega)}^2}=\lim_{k\to\infty}
            \frac{{\rm def}_{\alpha,\Omega}(u_k)}{{\rm dist}_{\alpha,\Omega}(u_k)^2}=0.
	\end{equation}
Thus, the compactness of maximizing sequences for $\mathscr{E}_{\alpha,\Omega}$ (via Vitali’s 
theorem and concentration--compactness) ensures that, up to subsequences, 
$u_k\to u_*\in\mathcal{M}_\alpha(\Omega)$ converges strongly in $W^{1,2}_0(\Omega)$. 
In addition, the normalized deviations $\{\widehat{w}_k\}_{k\in\mathbf{N}}
\subset W^{1,2}_0(\Omega)$, defined as
\begin{equation}\label{eq:normalized_deviations}
\widehat{w}_k=\frac{u_k-u_*}{\|\nabla(u_k-u_*)\|_{L^2(\Omega)}} \quad {\rm for} \quad k\in\mathbf{N},
\end{equation}
satisfy an asymptotic Euler--Lagrange equation, and the limit 
$\widehat{w}_*:=\lim_{k\to\infty}\widehat{w}_k$ solves the linearized eigenvalue 
problem \eqref{eq:alpha_BVP}. We then use Lemma~\ref{lm:spectral_gap} to 
conclude that when $\alpha$ is sufficiently small this cannot occur. 

The proof of Theorem~\ref{thm:subcritical_stability_nondeg} is analogous to the works of 
Bianchi and Egnell~\cite{BE} for the Sobolev inequality, 
and Engelstein, Neumayer, and Spolaor~\cite{MR4427104} for the Yamabe problem. 
In this situation, the quantitative exponent in our stability estimate depends on the spectral properties 
of the second variation at the family of maximizers~$\mathcal{M}_\alpha(\Omega)$.
When all critical points are nondegenerate or integrable, a local Lyapunov--Schmidt reduction 
implies \emph{quadratic} coercivity, hence the optimal quadratic exponent.
Moreover, such nondegeneracy holds \emph{generically}: for an open dense set of parameters 
$(\alpha,\Omega)$, all maximizers are nondegenerate.

The proof of Theorem~\ref{thm:critical_stability} is slightly different and exploits the symmetry of the domain.
On the unit disk $\mathbf{D}$, symmetry and compactness can be restored without smallness in $\alpha$. 
Initially, the results of Carleson and Chang \cite{Carleson-Chang} and Gidas, Ni, and 
Nirenberg  \cite{MR634248} ensure that every extremal $\bar{u}_*\in\mathcal{M}_\alpha(\mathbf{D})$ 
is positive, radial, and strictly decreasing. 
Furthermore, the linearized operator around $u_*$ separates into angular modes, and the $j=1$ 
nonradial sector introduces a positive centrifugal term $r^{-2}$. 
This produces a strict mode gap $\Lambda_1^\circ(\alpha)<\lambda_2^\circ(\alpha)$ between the radial 
eigenvalue of \eqref{eq:TM_eigenvalue} and the first nonradial eigenvalues of \eqref{eq:alpha_BVP}, 
ensuring that the quadratic form associated with the second variation is positive on the entire 
tangent space $T_{u_*}\mathcal{S}_1$. 
The same contradiction argument as in the subcritical case then shows that any sequence violating 
stability would lead to a sign-definite limit eigenfunction, which is again impossible by orthogonality, 
thereby providing a global stability estimate for all $\alpha\in(0,4\pi]$.
\medskip 

Let us describe the plan for the rest of the manuscript.
\S~\ref{sec:preliminaries} introduces the functional setting and collects basic
properties of the Trudinger--Moser functional, including the asymptotic comparison of
the associated eigenvalues and the statement of the Krein--Rutman theorem, which plays
a central role in the spectral analysis.
In \S~\ref{sec:subcritical_regime} we establish the quantitative stability in the
subcritical regime. 
The argument first derives a local coercivity estimate on the unit sphere
via a contradiction strategy based on the sign of the limiting
eigenfunction, and then extends the inequality to the closed unit ball
using the monotonicity of the Trudinger--Moser functional
along radial rescalings.
\S~\ref{sec:critical_regime} is devoted to the critical case on the unit disk,
where the rotational symmetry allows a decomposition into angular modes and the proof
of a strict spectral gap between the first radial and nonradial eigenvalues, leading
to a direct stability estimate without invoking the contradiction argument.
\smallskip

\noindent {\sc Acknowledgement:} Much of this research took place while A.C.M. visited 
the Julius-Maximilians-Universit\"at W\"urzburg from December 2024 to December 2025. 
He thanks the Institute 
of Mathematics for its hospitality. Part of this work was also discussed during 
visits of J.H.A. to the Julius-Maximilians-Universit\"at W\"urzburg on the occasion of 
the Geometric Analysis Workshops in 2025 and 2026, and of J.M.~do~\'O during the 2026 
edition; they gratefully acknowledge the support and hospitality of the Institute.

\section{Preliminaries}\label{sec:preliminaries}
This section collects some preliminary definitions and results that will be used throughout the manuscript.

\subsection{The Krein--Rutman principle}
The positivity and simplicity of the first eigenvalue for the linearized problems
\eqref{eq:alpha_BVP} and \eqref{eq:TM_BVP} follow from the Krein--Rutman theorem,
which extends the Perron--Frobenius theory to compact, strongly positive operators
in ordered Banach spaces.  We recall a convenient formulation for our purposes.

For further background on positivity methods in nonlinear elliptic problems, we 
refer to de~Figueiredo~\cite[Proposition~1.12A, Theorem~1.13]{deFigueiredo1982}.

\begin{theoremletter}[Krein--Rutman \cite{MR2759829}]\label{thm:krein-rutman}
Let $X$ be a real Banach space ordered by a closed, convex, solid cone $\mathcal{K}\subset X$ with 
nonempty interior ${\rm int}(\mathcal{K})\neq \varnothing$ and $\mathscr{T}:X\to X$ be a compact linear 
operator.
If $\mathscr{T}(\mathcal{K}\setminus\{0\})\subset {\rm int}(\mathcal{K})$, then, the spectral 
radius $\rho(\mathscr{T})>0$ is a simple eigenvalue of $\mathscr{T}$ with an eigenvector 
$\phi_1\in {\rm int}(\mathcal{K})$ (strictly positive in the cone ordering),
and no other eigenvalue admits a nonnegative eigenvector.
Moreover, if $\mathcal{S}\subset X$ defined as $\mathcal{S}:=\{u\in X :\langle u,\phi\rangle=0\}$ 
is the hyperplane orthogonal to any
continuous positive linear functional $\phi\in \mathcal{K}^*$, then
\[
\left\|\mathscr{T}^k\big|_{\mathcal{S}}\right\|=\mathrm{o}(\rho(\mathscr{T})^k)\quad as \quad k\to\infty,
\]
and so $\rho(\mathscr{T})\in\mathbf{R}$ is algebraically and geometrically simple and isolated in the spectrum.
\end{theoremletter}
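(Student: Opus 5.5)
Since this is the classical Krein--Rutman theorem, quoted from \cite{MR2759829}, I would reconstruct the standard proof rather than derive it from earlier results of the paper. The plan has five steps, in this order: positivity of $\rho(\mathscr{T})$, a nonnegative eigenvector, simplicity, uniqueness of the positive eigendirection via the adjoint, and the spectral decay on the complementary hyperplane. Throughout, $x\le y$ means $y-x\in\mathcal{K}$.

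\emph{Positivity of $\rho(\mathscr{T})$ and a positive eigenvector.}
\begin{enumerate}
\item Fix $u\in{\rm int}(\mathcal{K})$. Since $\mathscr{T}u\in{\rm int}(\mathcal{K})$, there is $c>0$ with $\mathscr{T}u-cu\in\mathcal{K}$. Iterating and using that $\mathscr{T}$ preserves $\mathcal{K}$ gives $\mathscr{T}^ku\ge c^ku$.
\item A closed cone with nonempty interior admits a continuous linear functional $\ell$ that is strictly positive on $\mathcal{K}\setminus\{0\}$ in a uniform way, so $\ell(\mathscr{T}^k u)\ge c^k\ell(u)$. This forces $\|\mathscr{T}^k\|^{1/k}\gtrsim c$, hence $\rho(\mathscr{T})\ge c>0$.
\item Since $\mathscr{T}$ is compact, every nonzero spectral value is an isolated eigenvalue of finite algebraic multiplicity. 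So $\rho=\rho(\mathscr{T})$ is isolated once we know it is an eigenvalue.
\item For $\lambda>\rho$, the resolvent $R(\lambda)=\sum_{k\ge0}\mathscr{T}^k/\lambda^{k+1}$ maps $\mathcal{K}$ into $\mathcal{K}$. Some point of modulus $\rho$ lies in the spectrum, and a Pringsheim-type argument on this positive series shows that $\|R(\lambda)u\|\to\infty$ as $\lambda\downarrow\rho$.
\item Set $x_\lambda=R(\lambda)u/\|R(\lambda)u\|\in\mathcal{K}$. Then $(\lambda-\mathscr{T})x_\lambda\to0$, so by compactness a subsequence of $x_\lambda$ converges to some $x\in\mathcal{K}$ with $\|x\|=1$ and $\mathscr{T}x=\rho x$. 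Strong positivity then gives $x=\rho^{-1}\mathscr{T}x\in{\rm int}(\mathcal{K})$.
\end{enumerate}

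\emph{Geometric and algebraic simplicity.}
\begin{enumerate}
\item Suppose $\mathscr{T}y=\rho y$ with $y$ not a multiple of $x$; replacing $y$ by $-y$ if needed, we may assume $y\notin-\mathcal{K}$. Let $t^*=\sup\{t>0: x-ty\in\mathcal{K}\}$. Because $x$ is interior, $t^*>0$; because $y\notin-\mathcal{K}$ and $\mathcal{K}$ is closed, $t^*<\infty$.
\item The vector $x-t^*y$ is nonzero, lies in $\partial\mathcal{K}$, and is an eigenvector for $\rho$. Strong positivity gives $x-t^*y=\rho^{-1}\mathscr{T}(x-t^*y)\in{\rm int}(\mathcal{K})$, contradicting the maximality of $t^*$.
\item For algebraic simplicity, suppose $(\mathscr{T}-\rho)z=x$. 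Then $\mathscr{T}^kz=\rho^kz+k\rho^{k-1}x$. Comparing $\pm z$ with $x$ (using $-sx\le z\le sx$ for some $s>0$) yields $k\rho^{k-1}x\le 2s\rho^kx$ for all $k$, which is impossible.
\end{enumerate}

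\emph{No other nonnegative eigenvector.}
\begin{enumerate}
\item Apply the existence argument above to the adjoint $\mathscr{T}^*$ on the dual cone $\mathcal{K}^*$; it is compact with the same spectral radius. This produces $\phi^*\in\mathcal{K}^*\setminus\{0\}$ with $\mathscr{T}^*\phi^*=\rho\phi^*$.
\item $\phi^*$ is strictly positive on $\mathcal{K}\setminus\{0\}$: if $\langle\phi^*,v\rangle=0$ for some $v\in\mathcal{K}\setminus\{0\}$, then $\langle\phi^*,\mathscr{T}v\rangle=\rho\langle\phi^*,v\rangle=0$ while $\mathscr{T}v\in{\rm int}(\mathcal{K})$, which is impossible for a nonzero positive functional.
\item If $\mathscr{T}y=\lambda y$ with $y\in\mathcal{K}\setminus\{0\}$, pairing with $\phi^*$ gives $\lambda\langle\phi^*,y\rangle=\rho\langle\phi^*,y\rangle$ with $\langle\phi^*,y\rangle>0$. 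Hence $\lambda=\rho$.
\end{enumerate}

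\emph{Decay on the hyperplane (expected main obstacle).} I read the hyperplane $\mathcal{S}$ as $\ker\phi^*$ with $\phi^*$ the adjoint eigenfunctional from the previous step. For an arbitrary positive functional the decay claim fails, because $\ker\phi$ need not be $\mathscr{T}$-invariant.
\begin{enumerate}
\item $\ker\phi^*$ is the range of the complementary Riesz projection $I-P$, where $P=\langle\phi^*,\cdot\rangle x/\langle\phi^*,x\rangle$ is the spectral projection onto the eigenspace of $\rho$.
\item The key fact to prove is that $\rho$ is the only spectral value of modulus $\rho$. Suppose $\mathscr{T}w=\rho e^{i\theta}w$ with $w\ne0$ complex. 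I would pass to the modulus: $|w|\le\rho^{-1}\mathscr{T}|w|$ in the cone order, using a suitable lattice-like replacement for $|w|$ in a general ordered Banach space.
\item By the maximality argument of the simplicity step, equality must hold and $|w|$ is a multiple of $x$.
\item This is the delicate part. In the Banach lattices relevant here, such as $\mathcal{C}^1_0(\overline\Omega)$ with the cone of positive functions, strong positivity forces $w=e^{i\eta}|w|$, and hence $\theta=0$.
\item With the peripheral spectrum reduced to $\{\rho\}$, the restriction $\mathscr{T}|_{\mathcal{S}}$ has spectral radius $r<\rho$, since the remaining spectrum is discrete away from $0$. Gelfand's formula then gives $\|\mathscr{T}^k|_{\mathcal{S}}\|\le C(r+\varepsilon)^k=o(\rho^k)$.
\end{enumerate}
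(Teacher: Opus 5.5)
\textbf{The gap: the peripheral spectrum.} The genuine gap is in the decay part, at the claim that $\rho=\rho(\mathscr{T})$ is the only point of $\sigma(\mathscr{T})$ on the circle $|\lambda|=\rho$. Your bound $\|\mathscr{T}^k|_{\mathcal{S}}\|=\mathrm{o}(\rho^k)$ depends entirely on this claim: a second peripheral eigenvalue would give $\|\mathscr{T}^k|_{\ker\phi^*}\|\gtrsim\rho^k$.

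Your argument passes to a modulus $|w|$, which does not exist in a general ordered Banach space, and that is the setting of the statement. So you only cover Banach lattices. Even your model case does not fit: $\mathcal{C}^1_0(\overline\Omega)$ is not a lattice, since $|w|$ of a $\mathcal{C}^1$ function is merely Lipschitz. There you would have to extend $\mathscr{T}$ to $\mathcal{C}(\overline\Omega)$ and run a pointwise phase argument.

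The missing idea is purely order-theoretic: your own sup-trick from the simplicity step, run on a real invariant plane.
\begin{enumerate}
\item Suppose $\mathscr{T}w=\rho e^{i\theta}w$ with $e^{i\theta}\neq1$, and let $E$ be the real span of ${\rm Re}\,w$ and ${\rm Im}\,w$. Then $\mathscr{T}|_E=\rho R$, where $R\neq I$ is a rotation (or $-I$) for a suitable Euclidean norm on $E$.
\item Since $\phi^*\circ\mathscr{T}=\rho\phi^*$, you get $\phi^*\circ(R-I)=0$ on $E$. As $R-I$ is invertible, $\phi^*|_E=0$.
\item Strict positivity of $\phi^*$ then gives $E\cap\mathcal{K}=\{0\}$ and $x\notin E$. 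Hence $\tau(z):=\sup\{t\ge0:\ x-tz\in\mathcal{K}\}$ is finite, positive and continuous on the unit circle of $E$.
\item Let $z_0$ minimize $\tau$ and put $z_1=R^{-1}z_0$. Then $0\neq x-\tau(z_1)z_1\in\mathcal{K}$, so $\mathscr{T}(x-\tau(z_1)z_1)=\rho\,(x-\tau(z_1)z_0)\in{\rm int}(\mathcal{K})$. This forces $\tau(z_0)>\tau(z_1)$, contradicting minimality.
\end{enumerate}

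\textbf{Smaller repairs.}
\begin{enumerate}
\item A functional with $\ell(v)\ge\delta\|v\|$ on $\mathcal{K}$ need not exist; narrow bumps in $\mathcal{C}[0,1]$ rule it out. You only need some $\ell\in\mathcal{K}^*\setminus\{0\}$, which is automatically positive at interior points.
\item The norm-level Pringsheim argument for $\rho\in\sigma(\mathscr{T})$ presupposes a normal cone, and solid cones such as that of $\mathcal{C}^1_0(\overline\Omega)$ are not normal. Instead, run Pringsheim on the scalar series $\langle\psi,R(\lambda)h\rangle$ with $\psi\in\mathcal{K}^*$, $h\in\mathcal{K}$. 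Then use that the peripheral spectral points are poles (by compactness) and that $\mathcal{K}^*$ separates points. Blow-up at your interior $u$ follows by comparing leading Laurent coefficients via $|\langle\psi,R(\lambda)h\rangle|\le\langle\psi,R(\lambda)u\rangle$ for $\|h\|\le\varepsilon$ and $\lambda>\rho$.
\item You cannot literally rerun the existence argument for $\mathscr{T}^*$: $\mathcal{K}^*$ usually has empty interior and $\mathscr{T}^*$ is not strongly positive. Once $\rho$ is algebraically simple, $P=\lim_{\lambda\downarrow\rho}(\lambda-\rho)R(\lambda)$ is a positive rank-one projection, $P=\langle\phi^*,\cdot\rangle x$. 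This yields $\phi^*\in\mathcal{K}^*\setminus\{0\}$ with $\mathscr{T}^*\phi^*=\rho\phi^*$.
\end{enumerate}

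Your reading $\mathcal{S}=\ker\phi^*$ is the right one. For $h\in\ker\phi\setminus\ker\phi^*$ one has $\rho^{-k}\mathscr{T}^kh\to Ph\neq0$, so the statement as printed (``any'' positive $\phi$) is false. The paper itself gives no proof here, only the citation to Brezis.
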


\begin{remark}
In our setting, $X=L^2(\Omega)$, $\mathcal{K}=\{u\in W^{1,2}_0(\Omega) : u\ge0\}$, and
$\mathscr{T}:L^2(\Omega)\to L^2(\Omega)$ is the Green operator associated with
\eqref{eq:alpha_BVP} or \eqref{eq:TM_BVP}.  
The Krein--Rutman theorem ensures that
both smallest eigenvalues $\lambda_1(\alpha,\Omega)$ and $\Lambda_1(\alpha,\Omega)$
are positive,  simple, $\lambda_1(\alpha,\Omega)< \Lambda_1(\alpha,\Omega)$ and possess 
a strictly positive eigenfunction.
\end{remark}

\subsection{Compactness of optimizing sequences}
Now, we will prove that any maximizing sequence for the Trudinger--Moser constant shall, up to a 
subsequence, converge strongly to some extremal.
	\begin{lemma}[Integral convergence]\label{lm:vitali_convergence}
        Let $\Omega\subset\mathbf{R}^2$ be a smooth, bounded domain and $\alpha\in (0,4\pi]$.
       Let $\{u_k\}_{k\in\mathbf{N}} \subset W^{1,2}_0(\Omega)$ be a sequence and suppose that 
       there exist $u_*\in W^{1,2}_0(\Omega)$ and $q>1$ such that $u_k \rightarrow u_*$ a.e. in $\Omega$ and
        \begin{equation}\label{eq:uniform_integrability}
         \sup_{k\in\mathbf{N}}\int_{\Omega} e^{\alpha q {|u_k|}^{2}} \ud x <\infty.
        \end{equation}
		Then one has
		\begin{equation}\label{eq:vitali_convergence}
			\lim_{k\rightarrow \infty}\int_{\Omega} e^{\alpha  {|u_k|}^{2}} 
            \ud x= \int_{\Omega} e^{\alpha  {|u_*|}^{2}} \ud x.
		\end{equation}	
	\end{lemma}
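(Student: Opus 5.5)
The plan is to apply Vitali's convergence theorem to the sequence $f_k := e^{\alpha |u_k|^2}$ on the finite-measure space $\Omega$. First, since $u_k \to u_*$ a.e. in $\Omega$ and $t\mapsto e^{\alpha t^2}$ is continuous, we get $f_k \to f_* := e^{\alpha |u_*|^2}$ a.e. in $\Omega$. Second, the hypothesis \eqref{eq:uniform_integrability} says precisely that $\{f_k\}_{k\in\mathbf{N}}$ is bounded in $L^q(\Omega)$ with $q>1$, because $f_k^q = e^{\alpha q|u_k|^2}$. Set
\[
M:=\sup_{k\in\mathbf{N}}\int_\Omega e^{\alpha q|u_k|^2}\ud x<\infty .
\]

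Next we verify uniform integrability. For any measurable $E\subset\Omega$, Hölder's inequality gives
\[
\int_E f_k \ud x \le |E|^{1-1/q}\Big(\int_\Omega f_k^q\ud x\Big)^{1/q}\le M^{1/q}|E|^{1-1/q}.
\]
This tends to zero uniformly in $k$ as $|E|\to 0$. Because $|\Omega|<\infty$, tightness is automatic, so the family $\{f_k\}$ is uniformly integrable.

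Vitali's theorem then yields $f_k\to f_*$ in $L^1(\Omega)$, and in particular \eqref{eq:vitali_convergence}. If we want to record that the limit is finite without appealing to Vitali's conclusion, Fatou's lemma applied to $f_k^q$ gives $\int_\Omega f_*^q\ud x\le M$. This shows $f_*\in L^q(\Omega)\subset L^1(\Omega)$.

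As an alternative that avoids citing Vitali, one can truncate directly. For $L>0$,
\[
\int_{\{f_k>L\}} f_k\ud x\le L^{1-q}\int_\Omega f_k^q\ud x\le M L^{1-q}.
\]
Dominated convergence applies to $\min\{f_k,L\}$, which is bounded by $L$ on the finite-measure set $\Omega$. Letting $k\to\infty$ and then $L\to\infty$ finishes the proof. There is no real obstacle here. The only point needing care is that the exponent $q>1$ is strictly greater than one, since this is what supplies the uniform integrability. The value of $\alpha$, including $\alpha=4\pi$, plays no role beyond entering through the hypothesis.
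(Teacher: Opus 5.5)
Your proof is correct and follows essentially the same route as the paper: a.e.\ convergence of $e^{\alpha|u_k|^2}$ together with the uniform $L^q$ bound ($q>1$) fed into a Vitali-type convergence result. The only differences are minor. The paper cites \cite[Lemma~3.2]{LuLu2020} for the convergence step and obtains $e^{\alpha|u_*|^2}\in L^1(\Omega)$ from the Trudinger--Moser inequality, whereas you verify uniform integrability directly via H\"older and get $e^{\alpha|u_*|^2}\in L^q(\Omega)$ from Fatou.
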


    \begin{proof} Setting $f_k=e^{\alpha |u_k|^2}$ and $f=e^{\alpha  {|u_*|}^{2}}$ we have 
that $f_k \to f$ a.e.\ in $\Omega$ and by \eqref{eq:uniform_integrability} we get that 
$\{f_k\}_{k\in\N}$ is uniformly bounded in $L^q (\Omega)$. In addition, the Trudinger--Moser 
inequality \eqref{moser_tru_ineq} implies that $f\in L^1(\Omega)$, and 
so \eqref{eq:vitali_convergence} follows from \cite[Lemma~3.2]{LuLu2020}. 
    \end{proof}    
	\begin{lemma}[Compactness of maximizing sequences]\label{lm:compactness}
		Let $\Omega\subset\mathbf{R}^2$ be a smooth, bounded domain, and $\alpha\in (0,4\pi]$.
        If $\{u_k\}_{k\in\mathbf{N}} \subset  \overline{\mathcal{B}}_1$ is a maximizing sequence 
        for the Trudinger--Moser inequality on the closed unit ball, that is, 
        \[
        \lim_{k\rightarrow \infty}\int_{\Omega} e^{\alpha  {|u_k|}^{2}} \ud x={\rm TM}({\alpha},\Omega).
        \]
        Then, there exists an optimizer $u_*\in \mathcal{M}_\alpha(\Omega)\cap \mathcal{S}_1$ such that, 
        up to a subsequence, $u_k \rightarrow u_*$ strongly in $W^{1,2}_0(\Omega)$ as $k\to\infty$. 
	\end{lemma}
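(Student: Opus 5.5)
Since $\{u_k\}\subset\overline{\mathcal{B}}_1$ is bounded in $W^{1,2}_0(\Omega)$, I pass to a subsequence with $u_k\rightharpoonup u_*$ weakly in $W^{1,2}_0(\Omega)$, strongly in $L^2(\Omega)$ and a.e.\ in $\Omega$; weak lower semicontinuity gives $\|\nabla u_*\|_{L^2}\le 1$. The argument then splits according to whether $u_*$ vanishes.

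\emph{Case $u_*\not\equiv 0$.} Here I use P.-L.\ Lions' concentration--compactness improvement of the Trudinger--Moser inequality. If $\|\nabla u_k\|_{L^2}\le 1$, $u_k\rightharpoonup u_*\neq 0$ and $p<(1-\|\nabla u_*\|_{L^2}^2)^{-1}$, then $\sup_k\int_\Omega e^{4\pi p u_k^2}\ud x<\infty$. In the critical case $\alpha=4\pi$, I may take $p=q>1$ in \eqref{eq:uniform_integrability} provided $\|\nabla u_*\|_{L^2}>0$, with $q=\infty$-type control when $\|\nabla u_*\|=1$. In the subcritical case $\alpha<4\pi$, I instead choose $q=4\pi/\alpha>1$ and apply \eqref{moser_tru_ineq} directly. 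In either case Lemma~\ref{lm:vitali_convergence} applies and gives $\mathscr{E}_{\alpha,\Omega}(u_*)=\lim_k\mathscr{E}_{\alpha,\Omega}(u_k)={\rm TM}(\alpha,\Omega)$, so $u_*$ is a maximizer. Radial monotonicity, namely $\mathscr{E}(u/\|\nabla u\|)>\mathscr{E}(u)$ whenever $0<\|\nabla u\|<1$ and $u\neq0$, forces $\|\nabla u_*\|_{L^2}=1$. Then $\|\nabla u_k\|\le 1=\|\nabla u_*\|\le\liminf\|\nabla u_k\|$ gives norm convergence, and together with weak convergence this yields strong convergence in the Hilbert space $W^{1,2}_0(\Omega)$.

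\emph{Case $u_*\equiv0$.} This case must be excluded. For $\alpha<4\pi$ it is immediate: Lemma~\ref{lm:vitali_convergence} with $q=4\pi/\alpha$ gives ${\rm TM}(\alpha,\Omega)=\lim_k\mathscr{E}(u_k)=|\Omega|$. This is impossible, because for any $\varphi\in\mathcal{S}_1$ we have $\mathscr{E}(\varphi)>|\Omega|$, so ${\rm TM}>|\Omega|$. For $\alpha=4\pi$ with $u_*=0$, the sequence either concentrates or does not. If it does not concentrate, Lions' lemma again gives uniform $L^q$ bounds and the same contradiction. If it does concentrate at some point $x_0$ with $|\nabla u_k|^2\,dx\rightharpoonup\delta_{x_0}$, the Carleson--Chang/Flucher concentration bound gives
\[
\limsup_k\mathscr{E}_{4\pi,\Omega}(u_k)\le|\Omega|+\pi e^{1+4\pi A_\Omega(x_0)}\le |\Omega|+\pi e^{1+4\pi\max A_\Omega},
\]
where $A_\Omega$ is the Robin function. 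Flucher's test-function construction shows that ${\rm TM}(4\pi,\Omega)$ strictly exceeds this concentration level, so concentrating sequences cannot be maximizing.

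The main obstacle is this critical concentration alternative. I would not reprove the strict inequality; I would quote Carleson--Chang \cite{Carleson-Chang} for the disk and Flucher \cite{Flucher1992} for general smooth domains, since that inequality is exactly how existence of extremals is proved in the critical case. Lions' lemma likewise only needs to be cited.
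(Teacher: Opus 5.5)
Your proposal is correct and follows essentially the same route as the paper. You take a weak limit and obtain uniform $L^q$ bounds, directly from \eqref{moser_tru_ineq} when $\alpha<4\pi$ and from Lions' concentration--compactness when $\alpha=4\pi$, so that Lemma~\ref{lm:vitali_convergence} applies. You exclude concentration by Flucher's strict inequality and vanishing without concentration by ${\rm TM}(\alpha,\Omega)>|\Omega|$, then finish with norm plus weak convergence. Your explicit exclusion of $u_*\equiv 0$ for $\alpha<4\pi$ is a small but welcome addition, since the paper's argument for \eqref{maxB} assumes $\|\nabla u_*\|_{L^2(\Omega)}>0$ and leaves that case implicit.
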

    
	\begin{proof} First, we note that 
    \begin{equation}\label{maxB}
         u_*\in \mathcal{M}_\alpha(\Omega)\cap \overline{\mathcal{B}}_1\quad {\rm implies} 
         \quad u_*\in\mathcal{M}_\alpha(\Omega)\cap\mathcal{S}_1.
    \end{equation}
    In fact, if $u_*\in \mathcal{M}_\alpha(\Omega)\cap \overline{\mathcal{B}}_1$ and 
    $0<\|\nabla u_*\|_{L^2(\Omega)}<1$, we obtain $\mathscr{E}_{\alpha,\Omega}(v)>
    \mathscr{E}_{\alpha,\Omega}(u_*)={\rm TM}(\alpha,\Omega)$, where $v =\tfrac{u_*}{\theta}$ 
    with $\theta=\|\nabla u_*\|_{L^2(\Omega)}$. Since $v\in\overline{\mathcal{B}}_1$, this 
    yields a contradiction. Hence, $\theta=\|\nabla u_*\|_{L^2(\Omega)}=1$, and so \eqref{maxB} holds.

Since $\{u_k\}_{k\in\mathbf{N}} \subset \overline{\mathcal{B}}_1$ is uniformly bounded in a Hilbert space, 
there exists $u_*\in W^{1,2}_0(\Omega)$ such that, up to a subsequence, $u_k \rightharpoonup u_*$ weakly 
in $W^{1,2}_0(\Omega)$. By the weak lower semicontinuity of the norm, we can write 
 \begin{equation}\label{semi-continuity}
            0\leq\|\nabla u_*\|_{L^2(\Omega)}\leq \liminf_{k\to\infty} 
            \|\nabla u_k\|_{L^2(\Omega)}\leq \limsup_{k\to\infty} \|\nabla u_k\|_{L^2(\Omega)}\leq 1,
            \end{equation}
            and consequently 
        \begin{equation}\label{eq:u_*inball}
            u_*\in\overline{\mathcal{B}}_1 .
        \end{equation}    
Further, from the compact embedding $W^{1,2}_0(\Omega)\hookrightarrow L^p(\Omega),\; p>1$, up to a subsequence, 
\begin{equation}\label{Lp-pointwise-convergence}
  u_k \rightharpoonup u_*\;\;\mbox{weakly in}\;\; W^{1,2}_0(\Omega),\quad u_k\to u_*\;\; 
  \mbox{in }\;\; L^p(\Omega)\;\; \mbox{and}\;\; u_k\to u_*\;\; \mbox{a.e.\ in }\;\;\Omega.
 \end{equation}
        We divide the rest of the proof into several claims as follows:

        \noindent{\bf Claim 1:} If $\alpha\in(0,4\pi)$, then $u_*\in \mathcal{M}_\alpha(\Omega)
        \cap \mathcal{S}_1$.
        
 		\noindent In fact, by choosing $q>1$ (close to $1$) such that $q\alpha\le 4\pi$, we 
        have $\mathscr{E}_{q\alpha, \Omega} (u_k)\le  \mathscr{E}_{4\pi, \Omega} (u_k)$ and the 
        estimate \eqref{moser_tru_ineq} implies that \eqref{eq:uniform_integrability} holds. 
        Thus, from \eqref{Lp-pointwise-convergence} we can apply  Lemma~\ref{lm:vitali_convergence} to 
        obtain
		\begin{equation}\label{conv}
			\lim_{k\to\infty}\int_{\Omega} e^{\alpha {|u_{k}|}^{2}}\ud x = \int_{\Omega} 
            e^{\alpha {|u_*|}^{2}}\ud x={\rm TM}(\alpha,\Omega).
		\end{equation}
       By combining \eqref{eq:u_*inball} with \eqref{conv}, we have $u_*\in
       \mathcal{M}_\alpha(\Omega)\cap\overline{\mathcal{B}}_1.$ From \eqref{maxB}, we 
       obtain $u_*\in \mathcal{M}_\alpha(\Omega)\cap \mathcal{S}_1$.
        
        \noindent{\bf Claim 2:} If $\alpha=4\pi$, then $u_*\in \mathcal{M}_\alpha(\Omega)\cap \mathcal{S}_1$.

    \noindent The critical case $\alpha=4\pi$ is more involved. We shall use the 
    concentration--compactness alternative due to Lions \cite{Lions85} (see also \cite{Cerny2013}). First, 
    let us denote by $\mathbb{M}(\overline{\Omega})$ the space of Radon measures on $\overline{\Omega}$.  
    As proved in \cite[Sec.~3, page~115]{MR2759829}, the boundedness of $\{u_k\}_{k\in\mathbf{N}}$ 
    in $W^{1,2}_0(\Omega)$ implies that, up to a subsequence, 
    \begin{equation}\label{convergence-Radon}
        |\nabla u_k|^2\ud x\overset{*}{\rightharpoonup} \mu \quad \text{in the weak* topology of  } 
        \mathbb{M}(\overline{\Omega}).
    \end{equation}
    By using \eqref{Lp-pointwise-convergence} and \eqref{convergence-Radon}, up to a subsequence,  we obtain
    \begin{equation}\label{Cerny-H}
        u_k \rightharpoonup u_*\;\; \text{in} \;\; W^{1,2}_0(\Omega),\quad u_k\to u_*\;\; 
        \text{a.e.\ in} \;\;\Omega\;\; \text{and} \;\;  |\nabla u_k|^2\ud x\overset{*}{\rightharpoonup} 
        \mu \;\; \text{in} \;\;\mathbb{M}(\overline{\Omega}).
    \end{equation}
   Hence, according to Lions' concentration--compactness alternative, there are only three possibilities:
   \begin{itemize}
        \item [(i)] If $u_*\equiv0$ and $\mu=\delta_{x_0}$ is the Dirac mass concentrated measure at 
        some point $x_0\in\overline{\Omega}$, then
        \begin{equation*}
            \lim_{k\to\infty}\int_{\Omega}e^{4\pi {|u_{k}|}^{2}} \ud x=c+|\Omega|
        \end{equation*}
        for some $c\in [0, \infty)$.
        \item [(ii)] If $u_*\equiv 0$ and $\mu$ is not a Dirac mass concentrated at one point, 
        then there is $q>1$ such that 
        \begin{equation*}
             \sup_{k\in\mathbf{N}}\int_{\Omega}e^{4\pi q{|u_{k}|}^{2}} \ud x<\infty.
        \end{equation*}
        \item [(iii)] If $u_*\not\equiv 0$, then there is $q>1$ such that
        \begin{equation*}
             \sup_{k\in\mathbf{N}}\int_{\Omega}e^{4\pi q{|u_{k}|}^{2}} \ud x<\infty.
        \end{equation*}
    \end{itemize}
   Notice that Lemma~\ref{lm:vitali_convergence} implies, for cases (ii) and (iii), that  
    \begin{equation}\label{Lions-compact}
         {\rm TM}(4\pi, \Omega)=\lim_{k\rightarrow \infty}\int_{\Omega} e^{4\pi  {|u_k|}^{2}} \ud x=\int_{\Omega} e^{4\pi {|u_*|}^{2}}\ud x.
    \end{equation}
    Since ${\rm TM}(4\pi, \Omega)>|\Omega|$, \eqref{Lions-compact} excludes the possibility (ii). In order to exclude possibility (i), we recall that 
 Flucher \cite{Flucher1992} (see also \cite{deFigueiredo2002,Lin1996}) proved that the concentration level is strictly lower than ${\rm TM}(4\pi, \Omega)$, namely,
\[{\rm TM}(4\pi, \Omega)>\sup_{x\in\overline{\Omega}} C_{\Omega}(x),\]
where
\begin{equation*}
  C_{\Omega}(x)
=
\sup\left\{
\limsup_{k\to\infty} \int_{\Omega} e^{4\pi |u_k|^2}\,\ud x
:
\|\nabla u_k\|_{L^2(\Omega)}\le 1,\;\;
|\nabla u_k|^2\ud x\overset{*}{\rightharpoonup} \delta_x \;\; \text{in} \;\;\mathbb{M}(\overline{\Omega}) \;\; \text{with} \;\; x\in \overline{\Omega}
\right\}. 
\end{equation*} 
Since $\{u_k\}_{k\in\N}$ is a maximizing sequence, the concentration case (i) does not occur. Consequently, possibility (iii) must occur, and \eqref{Lions-compact} holds as well. From \eqref{maxB}, \eqref{eq:u_*inball}, and \eqref{Lions-compact}, we must have $u_*\in \mathcal{M}_{4\pi}(\Omega)\cap \mathcal{S}_1$.

        \noindent{\bf Claim 3:} If $\alpha\in (0,4\pi]$, then $u_*\in\mathcal{M}_\alpha(\Omega)\cap \mathcal{S}_1$ and $\lim_{k\to\infty}\|\nabla (u_k -u_*)\|_{L^{2}(\Omega)}=0$.
        
 		\noindent From  Claims 1 and 2, we already have $u_*\in\mathcal{M}_\alpha(\Omega)\cap \mathcal{S}_1$, for any $\alpha\in (0,4\pi]$.  In addition, from \eqref{semi-continuity}, it follows that 
        \[
        \limsup_{k\to\infty} \|\nabla u_k\|_{L^2(\Omega)}\le 1=\|\nabla u_*\|_{L^2(\Omega)}.
        \]
        By virtue of the Hilbert structure of $W^{1,2}_0(\Omega)$, we can write
\begin{align*}
     0\le \| \nabla(u_k - u_*)\|^{2}_{L^{2}(\Omega)} &= \|\nabla u_k\|^2_{L^2(\Omega)}-2 \int_{\Omega}\nabla u_k\nabla u_*\, \ud x + \|\nabla u_*\|^2_{L^2{(\Omega)}}\\
     &\le 2\|\nabla u_*\|^2_{L^2{(\Omega)}}-2\int_{\Omega}\nabla u_k\nabla u_*\, \ud x \to 0 \quad {\rm as}\quad k\to \infty
\end{align*}
due to the weak convergence  $u_k \rightharpoonup u_*$ in $W^{1,2}_0(\Omega)$. Thus, $u_k \rightarrow u_*$ strongly in $W^{1,2}_0(\Omega)$ as $k\to\infty$. 
        The proof of the lemma is a consequence of the last claim.
	\end{proof}

\begin{corollary}[Qualitative stability]\label{maxseqconv}
    Let $\Omega\subset\mathbf{R}^2$ be a smooth, bounded domain and $\alpha\in (0,4\pi]$.
    If $\{u_k\} \subset \overline{\mathcal{B}}_1$ is a maximizing sequence for the Trudinger--Moser supremum ${\rm TM}(\alpha,\Omega)$, then it satisfies
    \[
    \lim_{k\rightarrow \infty}\inf_{v \in \mathcal{M}_\alpha(\Omega)\cap \mathcal{S}_1} \|\nabla (v-u_k)\|_{L^2(\Omega)}^2=0.
    \]
\end{corollary}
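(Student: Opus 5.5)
The corollary should follow from Lemma~\ref{lm:compactness} by a standard subsequence argument. I would argue by contradiction. Suppose the conclusion fails. Then there exist $\varepsilon>0$ and a subsequence, still denoted $\{u_k\}_{k\in\mathbf{N}}$, such that
\[
\inf_{v \in \mathcal{M}_\alpha(\Omega)\cap \mathcal{S}_1} \|\nabla (v-u_k)\|_{L^2(\Omega)}^2\geq \varepsilon \quad \text{for all } k\in\mathbf{N}.
\]
Passing to a subsequence does not change the limiting value of the functional. So this subsequence is still contained in $\overline{\mathcal{B}}_1$ and satisfies $\int_\Omega e^{\alpha|u_k|^2}\ud x\to{\rm TM}(\alpha,\Omega)$; that is, it is again a maximizing sequence on the closed unit ball.

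Next I would apply Lemma~\ref{lm:compactness} to this subsequence. It gives an optimizer $u_*\in\mathcal{M}_\alpha(\Omega)\cap\mathcal{S}_1$ and a further subsequence $\{u_{k_j}\}_{j\in\mathbf{N}}$ with $\|\nabla(u_{k_j}-u_*)\|_{L^2(\Omega)}\to0$. Since $u_*$ is an admissible competitor in the infimum,
\[
\varepsilon\leq \inf_{v \in \mathcal{M}_\alpha(\Omega)\cap \mathcal{S}_1} \|\nabla (v-u_{k_j})\|_{L^2(\Omega)}^2\leq \|\nabla(u_*-u_{k_j})\|_{L^2(\Omega)}^2\to 0,
\]
which is a contradiction. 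Hence the full sequence of distances tends to zero.

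There is no real obstacle, because all the analytic work (Vitali convergence in the subcritical case and Flucher's exclusion of concentration in the critical case) is already contained in Lemma~\ref{lm:compactness}. The only point needing care is the logic: a statement about a subsequence must be upgraded to one about the full sequence. This is exactly what the ``every subsequence has a further subsequence'' contradiction setup achieves.

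Finally, I would note that by \eqref{maxB} one has $\mathcal{M}_\alpha(\Omega)\cap\mathcal{S}_1=\mathcal{M}_\alpha(\Omega)\cap\overline{\mathcal{B}}_1$. Therefore the statement is equivalently phrased with ${\rm dist}_{\alpha,\Omega}(u_k)\to0$, consistent with \eqref{dist_defn}.
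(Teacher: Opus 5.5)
Your proposal is correct and follows the paper's own proof: argue by contradiction, pick a subsequence whose distance to $\mathcal{M}_\alpha(\Omega)\cap\mathcal{S}_1$ stays bounded below, then apply Lemma~\ref{lm:compactness} to extract a further subsequence converging strongly to an optimizer, which contradicts the lower bound. Your observation that the subsequence is still maximizing makes the applicability of Lemma~\ref{lm:compactness} explicit, a point the paper leaves implicit.
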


\begin{proof}
    Suppose by contradiction that the statement is false. Then, there exist a subsequence $\{u_{k_\ell}\}_{\ell\in\mathbf{N}}\subset \overline{\mathcal{B}}_1$ and a constant $\delta > 0$ such that $\inf_{v\in\mathcal{M}_\alpha(\Omega)\cap\mathcal{S}_1}\|\nabla(v-u_{k_\ell})\|_{L^2(\Omega)} > \delta$ for all $\ell\in\mathbf{N}$. Passing to a further subsequence, by Lemma~\ref{lm:compactness}, $u_{k_\ell} \rightarrow u$ strongly in $W_0^{1,2}(\Omega)$ for some $u\in\mathcal{M}_\alpha(\Omega)\cap\mathcal{S}_1$, which contradicts $\|\nabla(u-u_{k_\ell})\|_{L^2(\Omega)}>\delta$.
\end{proof}

\begin{corollary}[Compactness of optimizing sequences]\label{co:compactness}
		Let $\Omega\subset\mathbf{R}^2$ be a smooth, bounded domain and $\alpha\in (0,4\pi]$.
        If $\{u_k\}_{k\in\mathbf{N}} \subset \overline{\mathcal{B}}_1$ is an optimizing sequence for ${\rm TM}(\alpha,\Omega)$  then there exists an optimizer $u_*\in \mathcal{M}_\alpha(\Omega)\cap \mathcal{S}_1$ such that, up to a subsequence, $u_k \rightarrow u_*$ strongly in $W^{1,2}_0(\Omega)$ as $k\to\infty$.
	\end{corollary}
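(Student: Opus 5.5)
This corollary is a restatement of Lemma~\ref{lm:compactness}. The plan is to note that an ``optimizing sequence'' for ${\rm TM}(\alpha,\Omega)$ in $\overline{\mathcal{B}}_1$ is, by definition, a sequence $\{u_k\}_{k\in\mathbf{N}}\subset\overline{\mathcal{B}}_1$ with $\mathscr{E}_{\alpha,\Omega}(u_k)\to{\rm TM}(\alpha,\Omega)$. This is exactly a maximizing sequence in the sense of Lemma~\ref{lm:compactness}, since the supremum in \eqref{moser_tru_ineq} is taken over the closed unit ball. I would therefore apply Lemma~\ref{lm:compactness} directly. It provides $u_*\in\mathcal{M}_\alpha(\Omega)\cap\mathcal{S}_1$ and a subsequence along which $u_k\to u_*$ strongly in $W^{1,2}_0(\Omega)$, for every $\alpha\in(0,4\pi]$.

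For completeness, I would recall briefly why the lemma covers both regimes. In the subcritical range $\alpha<4\pi$, we choose $q>1$ with $q\alpha\le 4\pi$. The Trudinger--Moser bound then gives the uniform $L^q$ integrability \eqref{eq:uniform_integrability}, and Lemma~\ref{lm:vitali_convergence} passes the functional to the weak limit. In the critical case $\alpha=4\pi$, Lions' alternative, combined with Flucher's strict inequality between ${\rm TM}(4\pi,\Omega)$ and the concentration level, rules out concentration, so the same conclusion holds. In both cases the weak limit $u_*$ attains ${\rm TM}(\alpha,\Omega)$. By \eqref{maxB} it then lies on $\mathcal{S}_1$. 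Finally, $\limsup_k\|\nabla u_k\|_{L^2(\Omega)}\le 1=\|\nabla u_*\|_{L^2(\Omega)}$ together with weak convergence upgrades the convergence to strong convergence in the Hilbert space $W^{1,2}_0(\Omega)$.

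There is no real obstacle here. The only point to check is that ``optimizing'' and ``maximizing'' refer to the same class of sequences, which is immediate from the definition of ${\rm TM}(\alpha,\Omega)$ as a supremum over $\overline{\mathcal{B}}_1$. All of the substantive work, in particular the exclusion of concentration at $\alpha=4\pi$, has already been carried out in Lemma~\ref{lm:compactness}.
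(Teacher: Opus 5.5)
Your proposal is correct and matches the paper. The paper gives no separate proof of this corollary, because it is exactly Lemma~\ref{lm:compactness} restated: an ``optimizing'' sequence in $\overline{\mathcal{B}}_1$ is the same as a maximizing sequence there, so invoking that lemma, as you do, is precisely what is intended.
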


\subsection{From the unit sphere to the closed unit ball}
The quantitative stability results established so far concern functions normalized
on the unit sphere.
In contrast with the Sobolev functional, which is homogeneous and hence scale-invariant,
the Trudinger--Moser functional does not preserve its value under rescalings of the Dirichlet energy.
Thus, to obtain a formulation valid for arbitrary functions satisfying
$\|\nabla u\|_{L^2(\Omega)}\le1$, one must extend the stability inequality from the sphere to the closed unit ball, which is the content of the next lemma.

Before establishing the extension result, let us set some preliminary notation.
For any $u\in\overline{\mathcal{B}}_1\setminus\{0\}$, we set 
$\theta:=\|\nabla u\|_{L^2(\Omega)}\in(0,1]$.
We define the normalized projection $\Pi:\overline{\mathcal{B}}_1\setminus\{0\}\to \mathcal{S}_1$ as 
\(\Pi(u):=\frac{u}{\theta}\).

\begin{lemma}[Extension from the sphere to the ball]
\label{lm:sphere_to_ball}
Let $\Omega\subset\mathbf{R}^2$ be a smooth bounded domain, $\alpha\in(0,4\pi]$, and $\zeta\geq 2$.
Assume that
\begin{equation}\label{eq:stability_sphere}
{\rm def}_{\alpha, \Omega} (u) \gtrsim_{\alpha, \Omega} {\rm dist}_{\alpha, \Omega}(u)^\zeta
\quad {\rm for \ all}\quad u\in \mathcal{S}_1.
\end{equation}
Then one has
\begin{equation}\label{eq:stability_ball}
{\rm def}_{\alpha,\Omega}(u)\gtrsim_{\alpha,\Omega,\zeta}{\rm dist}_{\alpha,\Omega}(u)^\zeta \quad {\rm for \ all} \quad u\in  \overline{\mathcal{B}}_1.
\end{equation}
\end{lemma}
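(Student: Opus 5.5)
Fix $u\in\overline{\mathcal{B}}_1$. The case $u=0$ is trivial: ${\rm def}(0)={\rm TM}-|\Omega|>0$, while ${\rm dist}(0)=1$ because optimizers lie in $\mathcal{S}_1$ by \eqref{maxB}. So assume $u\neq0$, set $\theta=\|\nabla u\|_{L^2(\Omega)}\in(0,1]$ and $\tilde u=\Pi(u)=u/\theta\in\mathcal{S}_1$. The plan is to split the deficit as
\[
{\rm def}_{\alpha,\Omega}(u)=\big[\mathscr{E}_{\alpha,\Omega}(\tilde u)-\mathscr{E}_{\alpha,\Omega}(u)\big]+{\rm def}_{\alpha,\Omega}(\tilde u),
\]
where both brackets are nonnegative by monotonicity along radial rescalings. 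The distance will be handled by the triangle inequality:
\[
{\rm dist}(u)\le \|\nabla(u-\tilde u)\|_{L^2(\Omega)}+{\rm dist}(\tilde u)=(1-\theta)+{\rm dist}(\tilde u).
\]
Using $(a+b)^\zeta\le 2^{\zeta-1}(a^\zeta+b^\zeta)$, it then suffices to prove two bounds. The first is ${\rm def}(\tilde u)\gtrsim {\rm dist}(\tilde u)^\zeta$, which is exactly the hypothesis \eqref{eq:stability_sphere}. The second is the radial estimate
\[
\mathscr{E}(\tilde u)-\mathscr{E}(u)\gtrsim (1-\theta)^\zeta .
\]

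For the radial term, write $\mathscr{E}(u)=\int_\Omega e^{\alpha\theta^2\tilde u^2}\ud x$. By the convexity inequality $e^{a}-e^{b}\ge a-b$ for $a\ge b$,
\[
\mathscr{E}(\tilde u)-\mathscr{E}(u)\ge \int_\Omega\big(e^{\alpha\tilde u^2}-e^{\alpha\theta^2\tilde u^2}\big)\ud x\ge \alpha(1-\theta^2)\int_\Omega \tilde u^2\ud x .
\]
This is useful only if $\int_\Omega\tilde u^2$ is bounded below, which fails in general for $\tilde u\in\mathcal{S}_1$ (concentrating profiles). So I would split into two cases according to a small threshold $\delta>0$.

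In the first case, ${\rm dist}(\tilde u)\le\delta$. Then $\tilde u$ is $W^{1,2}_0$-close to some $v\in\mathcal{M}_\alpha(\Omega)$. By Poincar\'e, $\|\tilde u\|_{L^2}\ge\|v\|_{L^2}-C\delta$. Moreover $\inf_{v\in\mathcal{M}_\alpha}\|v\|_{L^2}>0$, because $\mathcal{M}_\alpha(\Omega)$ is compact in $W^{1,2}_0$ by Lemma~\ref{lm:compactness} and excludes $0$. This gives $\int\tilde u^2\ge c_0>0$, hence
\[
\mathscr{E}(\tilde u)-\mathscr{E}(u)\ge \alpha c_0(1-\theta)\ge \alpha c_0(1-\theta)^\zeta ,
\]
since $1-\theta\le1$ and $\zeta\ge 2$.

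In the second case, ${\rm dist}(\tilde u)>\delta$. There I would argue directly, without the radial estimate: ${\rm def}(u)\ge{\rm def}(\tilde u)\gtrsim\delta^\zeta$ by \eqref{eq:stability_sphere}, while ${\rm dist}(u)\le 2$ because $\|\nabla u\|\le 1$ and optimizers have norm one. Hence ${\rm def}(u)\gtrsim (\delta/2)^\zeta\,{\rm dist}(u)^\zeta$. In both cases the constants depend only on $\alpha,\Omega,\zeta$.

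The main obstacle is the lower bound on $\int_\Omega\tilde u^2$ in the first case, which is where compactness of the set of optimizers (Lemma~\ref{lm:compactness} and Corollary~\ref{co:compactness}) is needed to keep the optimizers uniformly away from zero in $L^2$. Everything else reduces to elementary inequalities. Note also that the exponent condition $\zeta\ge2$, indeed just $\zeta\ge1$, is what lets the linear gain $(1-\theta)$ dominate $(1-\theta)^\zeta$.
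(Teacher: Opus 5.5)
Your argument is correct, and it is organized differently from the paper's. The paper argues by contradiction. A violating sequence $\widetilde u_k$ in the open ball must be maximizing, so Lemma~\ref{lm:compactness} gives $\widetilde u_k\to\widetilde u_*\in\mathcal{M}_\alpha(\Omega)\cap\mathcal{S}_1$ strongly. Hence $\theta_k\to1$ and $\|\Pi(\widetilde u_k)\|_{L^2(\Omega)}$ stays away from zero. The paper then splits according to the limit of ${\rm dist}_{\alpha,\Omega}(\Pi(\widetilde u_k))/{\rm dist}_{\alpha,\Omega}(\widetilde u_k)$. If this ratio stays bounded below, the sphere hypothesis applies. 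If it tends to zero, then $1-\theta_k\gtrsim{\rm dist}_{\alpha,\Omega}(\widetilde u_k)$, and the same radial bound $\mathscr{E}_{\alpha,\Omega}(\Pi u)-\mathscr{E}_{\alpha,\Omega}(u)\ge\alpha(1-\theta^2)\|\Pi u\|_{L^2(\Omega)}^2$ that you use makes the quotient blow up like $(1+\theta_k)/(1-\theta_k)$.

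You use the same three ingredients (triangle inequality, radial monotonicity, compactness) but combine them directly. The additive split of the deficit together with $(a+b)^\zeta\le2^{\zeta-1}(a^\zeta+b^\zeta)$ replaces the ratio dichotomy. You apply compactness only to $\mathcal{M}_\alpha(\Omega)$ itself, to get $\min_{v\in\mathcal{M}_\alpha(\Omega)}\|v\|_{L^2(\Omega)}>0$ and hence a uniform $L^2$ lower bound on $\Pi u$ in a fixed neighbourhood of the optimizers. The far regime is then handled by the trivial bound ${\rm dist}_{\alpha,\Omega}(u)\le2$.

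What your route buys:
\begin{itemize}
\item an explicit constant, roughly $\min\{2^{1-\zeta}\min\{c_1,\alpha c_0\},\,c_1(\delta/2)^\zeta\}$, where $c_1$ is the constant in \eqref{eq:stability_sphere};
\item no subsequence extraction;
\item a single argument valid for every $\zeta\ge1$, whereas the paper writes out only $\zeta=2$.
\end{itemize}
It also sidesteps two points the paper's proof leans on without full justification. One is the comparison ${\rm dist}_{\alpha,\Omega}(\widetilde u_k)\ge{\rm dist}_{\alpha,\Omega}(\Pi(\widetilde u_k))$ for large $k$. The other is passing to a subsequence along which the distance ratio converges, which is needed for the two cases to be exhaustive. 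The paper's version, in return, fits the contradiction-plus-compactness template it uses for the main theorems.
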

\begin{proof} We give the proof in the case $\zeta=2$; the proof for $\zeta>2$ is 
very similar. 
Suppose by contradiction that there exists a sequence $\{\widetilde{u}_k\}_{k\in\mathbf{N}}
\subset {\rm int}(\mathcal{B}_1)$, {\it i.e.,} $0<\|\nabla \widetilde{u}_k\|_{L^2(\Omega)}<1$ and 
\begin{equation}\label{contradsphereball}
    \lim_{k\rightarrow \infty}\frac{{\rm TM}({\alpha},\Omega) - \mathscr{E}_{\alpha, \Omega} (\widetilde{u}_k)}{{\rm dist}_{\alpha,\Omega}^2(\widetilde{u}_k)}=0.
\end{equation}
In addition, notice that $\widetilde{u}_k\in W^{1,2}_0(\Omega)$ must be a maximizing sequence  for the Trudinger--Moser inequality, that is, $\lim_{k\to\infty}\mathscr{E}_{\alpha,\Omega}(\widetilde{u}_k)={\rm TM}({\alpha},\Omega)$. 
Passing to a subsequence if necessary, we may use Lemma~\ref{lm:compactness} and assume that $\widetilde{u}_k\to \widetilde{u}_*$ converges strongly in $W^{1,2}_0(\Omega)$ for some $\widetilde{u}_*\in \mathcal{M}_{\alpha}(\Omega)\cap\mathcal{S}_1$ and also
\[
-\infty<\lim_{k \rightarrow \infty}\frac{{\rm dist}_{\alpha,\Omega}(\Pi(\widetilde{u}_k))}{{\rm dist}_{\alpha,\Omega}(\widetilde{u}_k)}<\infty.
\]
Since ${\rm dist}_{\alpha,\Omega}(\widetilde{u}_k)\geq  {\rm dist}_{\alpha,\Omega}\left(\Pi(\widetilde{u}_k)\right)$ for $k\gg1$ large, it is not hard to check that $\Pi(\widetilde{u}_k)\to u_*$ must also converge strongly in $W^{1,2}_0(\Omega)$.

We distinguish two cases:

\noindent{\it Case 1:} There exists $\delta>0$ such that $\lim_{k \rightarrow \infty}\frac{{\rm dist}_{\alpha,\Omega}(\Pi(\widetilde{u}_k))}{{\rm dist}_{\alpha,\Omega}(\widetilde{u}_k)}>\delta>0$.

\noindent By means of \eqref{eq:stability_sphere}, we have the uniform estimate below:
\begin{align*}
    \frac{{\rm TM}({\alpha},\Omega) - \mathscr{E}_{\alpha, \Omega} (\widetilde{u}_k)}{{\rm dist}_{\alpha,\Omega}^2(\widetilde{u}_k)}&\geq \frac{{\rm TM}({\alpha},\Omega) - \mathscr{E}_{\alpha, \Omega} (\Pi(\widetilde{u}_k))}{{\rm dist}_{\alpha,\Omega}^2(\widetilde{u}_k)}\geq \delta^2\frac{{\rm TM}({\alpha},\Omega) - \mathscr{E}_{\alpha, \Omega} (\Pi(\widetilde{u}_k))}{{\rm dist}_{\alpha,\Omega}^2(\Pi(\widetilde{u}_k))}\gtrsim \delta^2 \quad {\rm for} \quad k\gg1,
\end{align*}
which contradicts \eqref{contradsphereball}.

\noindent{\it Case 2:} One has $\lim_{k \rightarrow \infty}\frac{{\rm dist}_{\alpha,\Omega}(\Pi(\widetilde{u}_k))}{{\rm dist}_{\alpha,\Omega}(\widetilde{u}_k)}=0$.

\noindent In this case, we observe 
\[
{\rm dist}_{\alpha,\Omega}(\widetilde{u}_k)\leq {\rm dist}_{\alpha,\Omega}\left(\Pi(\widetilde{u}_k)\right)+\left(1-\|\nabla \widetilde{u}_k\|_{L^2(\Omega)}\right)\leq 3 {\rm dist}_{\alpha,\Omega}(\widetilde{u}_k).
\]
From this, we can find $k\gg1$ sufficiently large satisfying
\[
\frac{1}{2}< \frac{\left(1-\|\nabla \widetilde{u}_k\|_{L^2(\Omega)}\right)}{{\rm dist}_{\alpha,\Omega}(\widetilde{u}_k)}.
\]
Consequently, we get
\begin{align*}
    \frac{{\rm TM}({\alpha},\Omega) - \mathscr{E}_{\alpha, \Omega} (\widetilde{u}_k)}{{\rm dist}_{\alpha,\Omega}^2(\widetilde{u}_k)}&=\frac{{\rm TM}({\alpha},\Omega) - \mathscr{E}_{\alpha, \Omega} (\Pi(\widetilde{u}_k))}{{\rm dist}_{\alpha,\Omega}^2(\widetilde{u}_k)}+\frac{\mathscr{E}_{\alpha, \Omega} \left( \Pi(\widetilde{u}_k)\right) - \mathscr{E}_{\alpha, \Omega} (\widetilde{u}_k)}{{\rm dist}_{\alpha,\Omega}^2(\widetilde{u}_k)}\\
    &\geq \frac{\mathscr{E}_{\alpha, \Omega} \left( \Pi(\widetilde{u}_k)\right) - \mathscr{E}_{\alpha, \Omega} (\widetilde{u}_k)}{{\rm dist}_{\alpha,\Omega}^2(\widetilde{u}_k)}\\
    &> \frac{1}{4}\left[\frac{\mathscr{E}_{\alpha, \Omega} \left( \Pi(\widetilde{u}_k)\right) - \mathscr{E}_{\alpha, \Omega} (\widetilde{u}_k)}{\left(1-\|\nabla \widetilde{u}_k\|_{L^2(\Omega)}\right)^2}\right]\\
    &\ge \frac{\alpha}{4}\left[\frac{1-\|\nabla \widetilde{u}_k\|_{L^2(\Omega)}^2}{\left(1-\|\nabla \widetilde{u}_k\|_{L^2(\Omega)}\right)^2}\right]\|\Pi(\widetilde{u}_k)\|_{L^2(\Omega)}^2,
\end{align*}
and so
\[
\frac{{\rm TM}({\alpha},\Omega) - \mathscr{E}_{\alpha, \Omega} (\widetilde{u}_k)}{{\rm dist}_{\alpha,\Omega}^2(\widetilde{u}_k)}\ge \frac{\alpha}{4}\left(\frac{1+\|\nabla \widetilde{u}_k\|_{L^2(\Omega)}}{1-\|\nabla \widetilde{u}_k\|_{L^2(\Omega)}}\right)\|\Pi(\widetilde{u}_k)\|_{L^2(\Omega)}^2\rightarrow \infty \quad {\rm as} \quad k \to \infty,
\]
which contradicts \eqref{contradsphereball}.
\end{proof}

\begin{remark}[Extension mechanism]
\label{rmk:sphere-to-ball}
In contrast with the Sobolev inequality, whose sharp quotient is homogeneous and
thus automatically invariant under rescaling of $\|\nabla u\|_{L^2(\Omega)}$,
the Trudinger--Moser functional lacks this homogeneity.
Nevertheless, the monotonicity of the map
$t\mapsto\mathscr{E}_{\alpha,\Omega}(t\,u)$ for $t\in[0,1]$
compensates for the absence of scale invariance and guarantees that the stability
inequality valid on the unit sphere $\mathcal{S}_1$ extends to the closed unit ball
$\overline{\mathcal{B}}_1$ without loss of generality.
\end{remark}

\section{Subcritical regime on domains}\label{sec:subcritical_regime}
This section is devoted to the proof of Theorems~\ref{thm:subcritical_stability} and \ref{thm:subcritical_stability_nondeg}.

\subsection{First eigenvalue asymptotic comparison}\label{subsec:first_eigenvalue}
In our next lemma, we analyze the asymptotic behavior of the first eigenvalues associated with the linearized problems \eqref{eq:alpha_BVP} and \eqref{eq:TM_BVP}.  
Both operators can be regarded as $\alpha$-perturbations of the Dirichlet Laplacian \eqref{eq:limiting_BVP}, with exponentially weighted potentials depending on the corresponding extremal $u_*$.  
The following result shows that, as $\alpha\to0$, these weights converge uniformly to $1$, so that the associated eigenvalues approach the first Dirichlet eigenvalue $\mu_1(\Omega)$.  
Moreover, the inequality $\lambda_1(\alpha)<\Lambda_1(\alpha)$ reflects the fact that the stronger weight $(1+2\alpha u_*^2)e^{\alpha u_*^2}$ in \eqref{eq:alpha_BVP} produces a smaller Rayleigh quotient.

    \begin{lemma}[First eigenvalue asymptotic gap]\label{lm:eigenvalue_convergence}
        Let $\Omega\subset\mathbf{R}^2$ be a smooth, bounded domain.
        Then $\lambda_1(\alpha)<\Lambda_1(\alpha)<\mu_1$
         and
         \[
            \lim_{\alpha\to 0}\lambda_1(\alpha)=\lim_{\alpha\to 0}\Lambda_1(\alpha)=\mu_1.
        \]
        In other words, given $\delta\in(0,\mu_1)$ there exists $0<\alpha_0=\alpha_0(\delta)$, 
        independent of the optimizer such that $\alpha\in(0,\alpha_0)$ implies
    \begin{equation*} 
    \mu_1-\delta < \lambda_1(\alpha) < \mu_1. 
    \end{equation*} 
    \end{lemma}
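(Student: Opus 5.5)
Comparing the three Rayleigh quotients \eqref{eq:dirichlet_eigenvalue}, \eqref{eq:alpha_eigenvalue}, \eqref{eq:TM_eigenvalue} pointwise gives the strict inequalities; a uniform bound on the weights as $\alpha\to0$ gives the limits. Fix an optimizer $u_*\in\mathcal{M}_\alpha(\Omega)\cap\mathcal{S}_1$. Since $u_*\not\equiv 0$ and $u_*$ solves the Euler--Lagrange equation $-\Delta u_* = c\,u_* e^{\alpha u_*^2}$, elliptic regularity makes $u_*$ continuous. We may take $u_*\ge0$, replacing it by $|u_*|$, which is again an optimizer; the strong maximum principle then gives $u_*>0$ in $\Omega$. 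Hence, pointwise on $\Omega$,
\[
1 < e^{\alpha u_*^2} < (1+2\alpha u_*^2)e^{\alpha u_*^2}.
\]
Therefore, for every $\phi\in W^{1,2}_0(\Omega)\setminus\{0\}$ the denominators in \eqref{eq:dirichlet_eigenvalue}, \eqref{eq:TM_eigenvalue} and \eqref{eq:alpha_eigenvalue} are strictly increasing in that order. Now take $\phi=\psi_1>0$, the first Dirichlet eigenfunction. It is an admissible competitor in \eqref{eq:TM_eigenvalue}, so $\Lambda_1(\alpha)\le \int|\nabla\psi_1|^2/\int\psi_1^2e^{\alpha u_*^2}<\mu_1$. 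Similarly, the positive first eigenfunction $v_1$ of \eqref{eq:TM_BVP}, whose existence and positivity follow from Theorem~\ref{thm:krein-rutman} and the preceding remark, is admissible in \eqref{eq:alpha_eigenvalue} and gives $\lambda_1(\alpha)<\Lambda_1(\alpha)$. Strictness uses only that the weights differ on a set of positive measure where the test function is nonzero.

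For the limits, the lower bound $\lambda_1(\alpha)\ge \mu_1/\sup_\Omega\big[(1+2\alpha u_*^2)e^{\alpha u_*^2}\big]$ follows immediately from the same comparison. So it suffices to show that $\alpha\|u_*\|_{L^\infty}^2\to0$ as $\alpha\to0$, \emph{uniformly over all optimizers}; this is the main obstacle. The plan is as follows.

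\textbf{Step 1: the Lagrange multiplier is bounded.} Testing the Euler--Lagrange equation against $u_*$ gives $1=c\int u_*^2e^{\alpha u_*^2}$. Since $\int u_*^2 e^{\alpha u_*^2}\ge \int u_*^2 \ge$ a positive constant (this needs some lower bound on $\int u_*^2$; see Step 3), $c$ stays bounded. Fix $\alpha\le \alpha_1<4\pi$.

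\textbf{Step 2: $L^\infty$ bound.} By Hölder and the Trudinger--Moser inequality at exponent $p\alpha\le4\pi$, the right-hand side $c\,u_*e^{\alpha u_*^2}$ is bounded in $L^p$ for some $p>1$, uniformly in the optimizer and in $\alpha\le\alpha_1$. Standard $W^{2,p}$ estimates then give $\|u_*\|_{L^\infty}\le M(\Omega)$ independent of $\alpha\le\alpha_1$. Consequently $\alpha\|u_*\|_\infty^2\le \alpha M^2\to0$, both weights tend to $1$ uniformly, and
\[
\mu_1\big/\big[(1+2\alpha M^2)e^{\alpha M^2}\big]\le\lambda_1(\alpha)<\Lambda_1(\alpha)<\mu_1 .
\]
This yields the limits and the explicit choice of $\alpha_0(\delta)$ independent of the optimizer.

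\textbf{Step 3: the lower bound on $\int u_*^2$.} This is the delicate point left open in Step 1. I would obtain it by comparing with the test function $\psi_1/\|\nabla\psi_1\|$: expanding ${\rm TM}(\alpha)\ge|\Omega|+\alpha\int\psi_1^2/\|\nabla\psi_1\|^2+O(\alpha^2)$ against $\mathscr{E}_{\alpha,\Omega}(u_*)\le|\Omega|+\alpha\int u_*^2e^{\alpha u_*^2}$ shows that $\int u_*^2e^{\alpha u_*^2}$ is bounded below by roughly $1/\mu_1$ for small $\alpha$, which is exactly what Step 1 requires.
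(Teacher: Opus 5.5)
Your argument is correct. Its skeleton is the paper's: a pointwise comparison of the three Rayleigh quotients plus a uniform $L^\infty$ bound on optimizers from elliptic regularity. The differences are in how you control the Lagrange multiplier and how you obtain strictness.

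The paper shows that $c=1/\int_\Omega u_*^2e^{\alpha u_*^2}\ud x$ equals the principal eigenvalue $\Lambda_1(\alpha)$: by Krein--Rutman, the positive eigenfunction $u_*$ must belong to the first eigenvalue. It then reads off $c\le\mu_1$ from the Rayleigh quotients and uses a $W^{2,2}$ estimate, which needs $4\alpha<4\pi$.

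You avoid this spectral identification. Combining ${\rm TM}(\alpha,\Omega)\ge\mathscr{E}_{\alpha,\Omega}(\psi_1/\|\nabla\psi_1\|_{L^2(\Omega)})$ with $e^x\ge 1+x$ and $e^x\le 1+xe^x$ (for $x\ge0$) gives exactly $\int_\Omega u_*^2e^{\alpha u_*^2}\ud x\ge 1/\mu_1$. That is, $c\le\mu_1$ for every $\alpha\in(0,4\pi]$ and every optimizer. No ``roughly'' and no smallness is needed. Note that this bounds $\int_\Omega u_*^2e^{\alpha u_*^2}\ud x$ itself, not $\int_\Omega u_*^2\ud x$ as your Step~1 suggests; that integral is all you actually use. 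Your $W^{2,p}$ estimate with $p>1$ also works for any $\alpha_1<4\pi$. The resulting $L^\infty$ bound is uniform over optimizers directly, without appealing to compactness of $\mathcal{M}_\alpha(\Omega)$.

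What the paper's route buys is the identity $c=\Lambda_1(\alpha)$. The paper uses it afterwards, since the Euler--Lagrange equation with multiplier $\Lambda_1(\alpha)$ enters the Taylor expansion and the second variation. Your proof establishes the lemma as stated but does not supply that identity.

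Conversely, your explicit test functions $\psi_1$ and $v_1$ genuinely prove the strict inequalities $\lambda_1(\alpha)<\Lambda_1(\alpha)<\mu_1$. The paper's final sandwich only yields non-strict ones, and there strictness rests on the remark following the Krein--Rutman theorem.
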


\begin{proof}
Let $u_{*}\in \mathcal{M}_\alpha(\Omega)$ be an optimizer for the Trudinger--Moser constant, that is,
\[
\mathscr{E}_{\alpha,\Omega}(u_*)={\rm TM}({\alpha},\Omega)
\quad\text{and}\quad
\|\nabla u_*\|_{L^2(\Omega)}=1.
\]
A direct computation shows that defining
\begin{equation}\label{eq:first_TM_eigenvalue}
 \Lambda_1({\alpha})=\frac{1}{\int_{\Omega} u_*^2 e^{\alpha u_*^2} \ud x}
\end{equation}
implies that $|u_*|\in \mathcal{M}_\alpha(\Omega)$ satisfies
\begin{equation}\label{eq:u*_abs}
\begin{cases}
-\Delta |u_*|=\Lambda_1({\alpha}) e^{\alpha |u_*|^2} |u_*| & \text{in} \quad\Omega,\\
|u_*|=0 & \text{on} \quad \partial \Omega.
\end{cases}
\end{equation}
By the maximum principle, $|u_*|>0$ in $\Omega$. 
Hence $u_*$ does not change sign; without loss of generality, we assume $u_*>0$. 
Then $u_{*}\in \mathcal{M}_\alpha(\Omega)$ solves the Euler--Lagrange equation associated with \eqref{eq:TM_BVP}
\begin{equation}\label{eqext}
\begin{cases}
-\Delta u_*=\Lambda_1({\alpha})  e^{\alpha u_*^2} u_* & \text{in} \quad \Omega,\\
u_*=0 & \text{on} \quad \partial \Omega,
\end{cases}
\end{equation}
where $\Lambda_1(\alpha)$ is the first Trudinger--Moser eigenvalue defined in \eqref{eq:TM_eigenvalue}. Indeed, by the Krein--Rutman principle in Theorem~\ref{thm:krein-rutman}, the first eigenvalue of \eqref{eq:TM_BVP} can be characterized as
\[
\Lambda_1({\alpha})
=\inf_{v \in W^{1,2}_0(\Omega)\setminus\{0\}}
\frac{\int_{\Omega} |\nabla v|^2 \ud x}
{\int_{\Omega} e^{\alpha u_*^2} v^2 \ud x}
\leq
\inf_{\psi \in W^{1,2}_0(\Omega)\setminus\{0\}}
\frac{\int_{\Omega} |\nabla \psi|^2 \ud x}
{\int_{\Omega}  \psi^2 \ud x}
=\mu_1(\Omega),
\]
where $\mu_1>0$ denotes the first Dirichlet eigenvalue of the Laplacian.

We now establish a uniform $L^\infty$ bound for the optimizers $u_*$ as $\alpha\to0$.  
Recall that $u_{*}\in \mathcal{M}_\alpha(\Omega)$ satisfies $\|\nabla u_*\|_{L^2(\Omega)}=1$ and solves the boundary value problem
\eqref{eqext}.
By elliptic regularity for the Dirichlet Laplacian, one has
\begin{equation}\label{eq:elliptic_estimate}
\|u_*\|_{W^{2,2}(\Omega)} \le C_\Omega\,\|\Lambda_1(\alpha)e^{\alpha u_*^2}u_*\|_{L^2(\Omega)}
= C_\Omega\,\Lambda_1(\alpha)\|e^{\alpha u_*^2}u_*\|_{L^2(\Omega)}.
\end{equation}
We next bound the right-hand side uniformly in $\alpha$.  
Since $\Lambda_1(\alpha)\le\mu_1(\Omega)$ and $u_*\in\mathcal{S}_1$, we can apply Hölder’s inequality and the Trudinger--Moser estimate to obtain
\[
\|e^{\alpha u_*^2}u_*\|_{L^2(\Omega)}^2
=\int_\Omega u_*^2 e^{2\alpha u_*^2}\ud x
\le
\|u_*\|_{L^4(\Omega)}^2
\left(\int_\Omega e^{4\alpha u_*^2}\ud x\right)^{1/2}
\le
C_\Omega
\left(\int_\Omega e^{4\alpha u_*^2}\ud x\right)^{1/2}.
\]
For $\alpha$ sufficiently small, say $\alpha\in(0,\alpha_0)$ with $\alpha_0<\pi$, the last integral is uniformly bounded by the Trudinger--Moser inequality, since $\| \nabla u_* \|_2 = 1$ and $4\alpha_0<4\pi$.  
Consequently, uniformly, we get
\[
\|e^{\alpha u_*^2}u_*\|_{L^2(\Omega)}\le C_\Omega,
\quad
{\rm for } \quad \alpha\in(0,\alpha_0),
\]
and from \eqref{eq:elliptic_estimate}, we obtain
\[
\|u_*\|_{W^{2,2}(\Omega)} \le C_\Omega'.
\]
Because $\Omega\subset\mathbf{R}^2$, the Sobolev embedding $W^{2,2}(\Omega)\hookrightarrow \mathcal{C}^{0,\gamma}(\overline{\Omega})$ (for any $\gamma\in(0,1)$) implies
\[
\|u_*\|_{L^\infty(\Omega)} \le C_\Omega''\,\|u_*\|_{W^{2,2}(\Omega)} \le M,
\]
for some constant $M>0$ depending only on $\Omega$ and independent of $\alpha$ as long as $\alpha\in(0,\alpha_0)$.  
Thus, we obtain the desired uniform bound
\begin{equation}\label{eq:uinftybound}
\|u_*\|_{L^\infty(\Omega)}\le M
\quad\text{for all }\alpha\in(0,\alpha_0).
\end{equation}
From the compactness of $\mathcal{M}_\alpha(\Omega)\cap {\mathcal{S}}_1$ (Corollary~\ref{co:compactness}), the constant $M$ can be taken independent of the extremal $u_*$.

Next, we compare the Rayleigh quotients of the three eigenvalue problems \eqref{eq:dirichlet_eigenvalue}, \eqref{eq:alpha_eigenvalue}, and \eqref{eq:TM_eigenvalue}.
For any $\phi\in W^{1,2}_0(\Omega)\setminus\{0\}$, using \eqref{eq:uinftybound} we have
\[
\frac{1}{(1 + 2\alpha M^2) e^{\alpha M^2}}
\frac{\int_{\Omega} |\nabla \phi|^2 \ud x}{\int_{\Omega}  \phi^2 \ud x}
\le
\frac{\int_{\Omega} |\nabla \phi|^2 \ud x}{\int_{\Omega} (1 + 2\alpha u_*^2) e^{\alpha u_*^2} \phi^2 \ud x}
\le
\frac{\int_{\Omega} |\nabla \phi|^2 \ud x}{\int_{\Omega} e^{\alpha u_*^2} \phi^2 \ud x}
\le
\frac{\int_{\Omega} |\nabla \phi|^2 \ud x}{\int_{\Omega}  \phi^2 \ud x}.
\]
Taking the infimum over all $\phi\in W^{1,2}_0(\Omega)\setminus\{0\}$ gives us
\[
\frac{\mu_1}{(1 + 2\alpha M^2) e^{\alpha M^2}}
\ \le\
\lambda_1(\alpha)
\ \le\
\Lambda_1(\alpha)
\ \le\
\mu_1.
\]
Since $(1 + 2\alpha M^2) e^{\alpha M^2}\to1$ as $\alpha\to0$, it follows that
\[
\lim_{\alpha\to0}\lambda_1(\alpha)
=\lim_{\alpha\to0}\Lambda_1(\alpha)
=\mu_1.
\]
Thus, for every $\delta>0$, there exists $\alpha_0>0$ such that $\alpha\in(0,\alpha_0)$ implies
\[
\mu_1-\delta<\lambda_1(\alpha)<\mu_1,
\]
which completes the proof.
\end{proof}

\begin{lemma}[Spectral gap]\label{lm:spectral_gap}
    Let $\Omega\subset\mathbf{R}^2$ be a smooth, bounded domain.
    For $\alpha\in(0,\alpha_0)$ sufficiently small, one has
    \[
    \lambda_1(\alpha) < \Lambda_1(\alpha) < \lambda_2(\alpha).
    \]
    In particular, $\Lambda_1(\alpha)$ is not an eigenvalue of the linearized problem \eqref{eq:alpha_BVP}.
\end{lemma}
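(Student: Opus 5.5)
The plan has three parts: the left inequality, the right inequality via a weighted comparison to $\mu_2(\Omega)$, and the final ``in particular'' claim from simplicity of $\lambda_1$.

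\emph{Left inequality.} For $\lambda_1(\alpha)<\Lambda_1(\alpha)$, take the positive first eigenfunction $v_1$ of \eqref{eq:TM_BVP}; by \eqref{eqext} this is $u_*$ itself. Insert it into the Rayleigh quotient \eqref{eq:alpha_eigenvalue}. Since $u_*>0$ in $\Omega$, the weight $(1+2\alpha u_*^2)e^{\alpha u_*^2}$ is strictly larger than $e^{\alpha u_*^2}$ on a set where $u_*^2>0$. Hence
\[
\lambda_1(\alpha)\le \frac{\int_\Omega|\nabla u_*|^2\ud x}{\int_\Omega u_*^2(1+2\alpha u_*^2)e^{\alpha u_*^2}\ud x}<\frac{1}{\int_\Omega u_*^2e^{\alpha u_*^2}\ud x}=\Lambda_1(\alpha),
\]
using $\|\nabla u_*\|_{L^2(\Omega)}=1$ and \eqref{eq:first_TM_eigenvalue}.

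\emph{Right inequality.} To show $\Lambda_1(\alpha)<\lambda_2(\alpha)$ for small $\alpha$, compare $\lambda_2(\alpha)$ with the second Dirichlet eigenvalue $\mu_2(\Omega)$. The key input is the uniform bound \eqref{eq:uinftybound}: for $\alpha\in(0,\alpha_0)$ one has $\|u_*\|_{L^\infty(\Omega)}\le M$ with $M$ independent of $\alpha$ and of the optimizer. Then the weight $w_\alpha=(1+2\alpha u_*^2)e^{\alpha u_*^2}$ satisfies $1\le w_\alpha\le (1+2\alpha M^2)e^{\alpha M^2}=:K_\alpha\to 1$. Apply the Courant--Fischer min-max characterization
\[
\lambda_2(\alpha)=\inf_{\dim V=2}\ \sup_{\phi\in V\setminus\{0\}}\frac{\int_\Omega|\nabla\phi|^2\ud x}{\int_\Omega w_\alpha\phi^2\ud x}.
\]
This is valid because $w_\alpha$ is bounded above and below, so the weighted problem has a compact self-adjoint resolvent on $L^2(\Omega,w_\alpha\ud x)$. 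Comparing quotients pointwise in $\phi$ gives $\lambda_2(\alpha)\ge \mu_2(\Omega)/K_\alpha$. Combining with $\Lambda_1(\alpha)\le\mu_1(\Omega)$ from Lemma~\ref{lm:eigenvalue_convergence}, it suffices that $\mu_2/K_\alpha>\mu_1$. Because $\mu_1<\mu_2$ strictly (simplicity of the first Dirichlet eigenvalue on a connected domain) and $K_\alpha\to1$, this holds after shrinking $\alpha_0$ so that $K_{\alpha_0}<\mu_2/\mu_1$. I will note in passing that connectedness is implicit in ``domain''; on a disconnected set $\mu_1$ could be multiple.

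\emph{Final claim.} The ``in particular'' statement follows because $\lambda_1(\alpha)$ is simple (Krein--Rutman, Theorem~\ref{thm:krein-rutman}). The spectrum of \eqref{eq:alpha_BVP} therefore satisfies $\lambda_1<\lambda_2\le\lambda_3\le\cdots$, and $\Lambda_1$ lies strictly in the gap $(\lambda_1,\lambda_2)$, so it is not an eigenvalue.

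\emph{Main obstacle.} The delicate step is ensuring uniformity. The bound $M$ must not depend on which optimizer $u_*$ is chosen; this is supplied by the compactness of $\mathcal{M}_\alpha(\Omega)\cap\mathcal{S}_1$ (Corollary~\ref{co:compactness}) together with the $W^{2,2}$ estimate from the proof of Lemma~\ref{lm:eigenvalue_convergence}. The threshold $\alpha_0$ must then be fixed explicitly in terms of $M$ and $\mu_2/\mu_1$. I would double-check that the elliptic estimate's constant depends only on $\Omega$ once $\alpha<\pi$, since that is what makes $K_\alpha\to1$ legitimate.
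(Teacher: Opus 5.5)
Your proposal is correct and follows essentially the same route as the paper. It uses the Courant--Fischer sandwich $\lambda_2(\alpha)\ge\mu_2/K_\alpha$, where $K_\alpha=(1+2\alpha M^2)e^{\alpha M^2}$ comes from the uniform bound \eqref{eq:uinftybound}, together with $\Lambda_1(\alpha)\le\mu_1<\mu_2$, and reads off the non-eigenvalue claim from the strict gap. Your test-function argument with $u_*$ for the strict inequality $\lambda_1(\alpha)<\Lambda_1(\alpha)$ is a welcome addition, since the paper simply cites Lemma~\ref{lm:eigenvalue_convergence}, whose Rayleigh-quotient comparison on its own only yields $\lambda_1(\alpha)\le\Lambda_1(\alpha)$.
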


\begin{proof}
The strict inequality $\lambda_1(\alpha)<\Lambda_1(\alpha)$ is established in Lemma~\ref{lm:eigenvalue_convergence}.
For the upper bound, we extend the Rayleigh quotient comparison to higher eigenvalues.
By the Courant--Fischer min-max principle (see, e.g., \cite[Chapter~VI]{CourantHilbert}), the $j$-th eigenvalue of \eqref{eq:alpha_BVP} satisfies
\[
\lambda_j(\alpha)
=\inf_{\substack{V\subset W^{1,2}_0(\Omega)\\ \dim V=j}}\;
\sup_{v\in V\setminus\{0\}}
\frac{\int_\Omega|\nabla v|^2\ud x}
{\int_\Omega(1+2\alpha u_*^2)e^{\alpha u_*^2}v^2\ud x}.
\]
Using the uniform bound $\|u_*\|_{L^\infty(\Omega)}\le M$ from \eqref{eq:uinftybound}, the same sandwich estimate as in Lemma~\ref{lm:eigenvalue_convergence} yields
\[
\frac{\mu_j}{(1+2\alpha M^2)e^{\alpha M^2}}
\le \lambda_j(\alpha) \le \mu_j
\quad\text{for all }j\in\mathbf{N}.
\]
In particular, $\lambda_j(\alpha)\to\mu_j$ as $\alpha\to0$ for every $j\ge1$.
Since $\mu_1<\mu_2$ (the first Dirichlet eigenvalue is simple) and $\Lambda_1(\alpha)\to\mu_1$, there exists $\alpha_0>0$ such that
\[
\Lambda_1(\alpha)<\tfrac{1}{2}(\mu_1+\mu_2)<\lambda_2(\alpha)
\quad\text{for all }\alpha\in(0,\alpha_0).
\]
Combined with Lemma~\ref{lm:eigenvalue_convergence}, this gives the desired spectral gap.
Since the eigenvalues $\{\lambda_j(\alpha)\}_{j\ge1}$ are discrete and $\Lambda_1(\alpha)$ lies strictly between consecutive eigenvalues, it cannot belong to the spectrum of \eqref{eq:alpha_BVP}.
\end{proof}

\subsection{Taylor expansion for almost optimizers}\label{subsec:taylor_expansion}
We obtain an asymptotic expansion for the Trudinger--Moser functional near optimizers.

\begin{lemma}[Expansion near optimizers]\label{lm:taylor_expansion}
Let $\Omega\subset\mathbf{R}^2$ be a smooth, bounded domain.
For any $u_*\in \mathcal{M}_\alpha(\Omega)\cap \mathcal{S}_1$ and $u\in \mathcal{S}_1$ one has 
\begin{equation} \label{eq:taylor_expansion} 
\mathscr{E}_{\alpha,\Omega} (u_*) - \mathscr{E}_{\alpha,\Omega} (u) 
= \frac{\alpha}{\Lambda_1(\alpha)} 
		\| \nabla w\|_{L^2(\Omega)}^2 - \alpha\mathcal{I}^*_{\alpha,\Omega}(w) 
        + \mathcal{O}(\|\nabla w\|_{L^2(\Omega)}^3) \quad {\rm as} \quad \|\nabla w\|_{L^{2}(\Omega)}\to0,
\end{equation} 
where $w=u-u_*$ and
\begin{equation}\label{eq:integral_error}
  \mathcal{I}^*_{\alpha,\Omega}(w)= \int_\Omega w^2(1+2\alpha u_*^2)
		e^{\alpha u_*^2} \ud x.  
\end{equation}
\end{lemma}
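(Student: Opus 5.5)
The plan is a second-order Taylor expansion of the integrand $e^{\alpha u^2}$ around $u_*$ with $u=u_*+w$. The first-order term is eliminated using the Euler--Lagrange equation \eqref{eqext} together with the sphere constraint. The cubic remainder is controlled by Trudinger--Moser integrability of the small perturbation $w$.

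\textbf{Step 1 (pointwise expansion).} As in the proof of Lemma~\ref{lm:eigenvalue_convergence}, take $u_*>0$ solving \eqref{eqext}. Set $f(s)=e^{\alpha s^2}$, so that $f'(s)=2\alpha s e^{\alpha s^2}$ and $f''(s)=2\alpha(1+2\alpha s^2)e^{\alpha s^2}$. Taylor's formula with integral remainder gives, pointwise,
\[
e^{\alpha(u_*+w)^2}=e^{\alpha u_*^2}+2\alpha u_* w\, e^{\alpha u_*^2}+\alpha(1+2\alpha u_*^2)e^{\alpha u_*^2}w^2+R(x),
\]
with $|R|\le \tfrac16 |w|^3\sup_{|t|\le 1}|f'''(u_*+tw)|$. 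Since $|f'''(s)|\lesssim_\alpha (1+|s|^3)e^{\alpha s^2}$, this yields
\[
|R|\lesssim |w|^3\bigl(1+|u_*|+|w|\bigr)^3 e^{\alpha(|u_*|+|w|)^2}.
\]
Integrating over $\Omega$, the quadratic term is exactly $\alpha\,\mathcal{I}^*_{\alpha,\Omega}(w)$.

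\textbf{Step 2 (first-order term).} Test \eqref{eqext} against $w$ to get
\[
2\alpha\int_\Omega u_* w\, e^{\alpha u_*^2}\ud x=\frac{2\alpha}{\Lambda_1(\alpha)}\int_\Omega\nabla u_*\cdot\nabla w\ud x.
\]
Because both $u$ and $u_*$ lie on $\mathcal{S}_1$, expanding $\|\nabla(u_*+w)\|_{L^2(\Omega)}^2=1$ gives $2\int_\Omega\nabla u_*\cdot\nabla w\ud x=-\|\nabla w\|_{L^2(\Omega)}^2$. Hence the linear term equals $-\frac{\alpha}{\Lambda_1(\alpha)}\|\nabla w\|^2_{L^2(\Omega)}$. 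Combining with Step 1,
\[
\mathscr{E}_{\alpha,\Omega}(u_*)-\mathscr{E}_{\alpha,\Omega}(u)=\frac{\alpha}{\Lambda_1(\alpha)}\|\nabla w\|_{L^2(\Omega)}^2-\alpha\,\mathcal{I}^*_{\alpha,\Omega}(w)-\int_\Omega R\ud x.
\]
It therefore remains to prove $\int_\Omega |R|\ud x=\mathcal{O}(\|\nabla w\|_{L^2(\Omega)}^3)$.

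\textbf{Step 3 (remainder; the main obstacle).} First I need $u_*\in L^\infty(\Omega)$ for every $\alpha\in(0,4\pi]$, including the critical case. The function $u_*$ is a fixed element of $W^{1,2}_0(\Omega)$, and Trudinger's theorem gives $e^{\beta u_*^2}\in L^1(\Omega)$ for every $\beta>0$. So the right-hand side of \eqref{eqext} lies in $L^p(\Omega)$ for all $p<\infty$. Elliptic $W^{2,p}$ regularity and Sobolev embedding then give $\|u_*\|_{L^\infty(\Omega)}\le M$ for some $M=M(u_*)$.

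Next, for any $\varepsilon>0$, Young's inequality gives $(|u_*|+|w|)^2\le(1+\varepsilon)w^2+C_\varepsilon M^2$. Using this and the bound on $u_*$, Hölder's inequality with exponents $p,p'$ gives
\[
\int_\Omega|R|\ud x\lesssim_{M,\varepsilon}\int_\Omega\bigl(|w|^3+|w|^6\bigr)e^{\alpha(1+\varepsilon)w^2}\ud x
\le \bigl(\|w\|_{L^{3p}}^3+\|w\|_{L^{6p}}^6\bigr)\Bigl(\int_\Omega e^{p'\alpha(1+\varepsilon)w^2}\ud x\Bigr)^{1/p'}.
\]
To handle the last factor, write $w=\|\nabla w\|_{L^2(\Omega)}\,\hat w$ with $\hat w\in\mathcal{S}_1$. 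Then
\[
p'\alpha(1+\varepsilon)w^2=p'\alpha(1+\varepsilon)\|\nabla w\|^2_{L^2(\Omega)}\,\hat w^2\le 4\pi\hat w^2
\]
once $\|\nabla w\|_{L^2(\Omega)}$ is small. By \eqref{moser_tru_ineq} the exponential integral is then bounded by ${\rm TM}(4\pi,\Omega)$, uniformly in $w$, even when $\alpha=4\pi$.

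Finally, the Sobolev embedding $W^{1,2}_0(\Omega)\hookrightarrow L^{q}(\Omega)$ for all $q<\infty$ gives $\|w\|_{L^{3p}}^3\lesssim\|\nabla w\|_{L^2(\Omega)}^3$ and $\|w\|_{L^{6p}}^6\lesssim\|\nabla w\|_{L^2(\Omega)}^6$. This yields $\int_\Omega|R|\ud x=\mathcal{O}(\|\nabla w\|_{L^2(\Omega)}^3)$ as $\|\nabla w\|_{L^2(\Omega)}\to0$, which completes \eqref{eq:taylor_expansion}.

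The delicate point is the uniform exponential integrability in the critical case $\alpha=4\pi$. It is resolved by keeping the bounded part $u_*$ in the constant $C_\varepsilon M^2$ and using only the smallness of $\|\nabla w\|_{L^2(\Omega)}$.
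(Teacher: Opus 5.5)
Your proposal is correct. It matches the paper on the first-order term (testing \eqref{eqext} against $w$ and using $2\int_\Omega\nabla u_*\cdot\nabla w\ud x=-\|\nabla w\|_{L^2(\Omega)}^2$) and on identifying the quadratic term as $\alpha\,\mathcal{I}^*_{\alpha,\Omega}(w)$. It handles the remainder by a different route.

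The paper expands $e^{\alpha(u_*+w)^2}=e^{\alpha u_*^2}\sum_{\ell\ge 0}\frac{\alpha^\ell}{\ell!}(2u_*w+w^2)^\ell$ as a full power series. It isolates the cubic and quartic pieces of the $\ell=2$ term and the tail $\ell\ge3$, and bounds them term by term. The tools are the moment inequalities $\int_\Omega|w|^{2\ell}\ud x\le{\rm TM}(\alpha,\Omega)\,\ell!\,\alpha^{-\ell}\|\nabla w\|_{L^2(\Omega)}^{2\ell}$, obtained by keeping a single term of the exponential series in the Trudinger--Moser inequality for $w/\|\nabla w\|_{L^2(\Omega)}$. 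It adds H\"older for odd moments and ratio-test summation, with $M=\max\{1,\|u_*\|_{L^\infty(\Omega)}\}$ in the coefficients.

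You instead take the third-order Lagrange remainder pointwise and absorb $u_*$ via $(|u_*|+|w|)^2\le(1+\varepsilon)w^2+C_\varepsilon M^2$. You then make one appeal to the sharp inequality at $4\pi$ for $\hat w$, with the smallness of $\|\nabla w\|_{L^2(\Omega)}$ paying for the larger exponent $p'\alpha(1+\varepsilon)$.

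What your version buys:
\begin{itemize}
\item It avoids all series bookkeeping.
\item It makes transparent why the estimate survives at $\alpha=4\pi$.
\item It needs only a growth bound on $f'''$, so it transfers to other nonlinearities of the same type.
\item It justifies $u_*\in L^\infty(\Omega)$ for every $\alpha\in(0,4\pi]$. The paper's proof uses this implicitly, although \eqref{eq:uinftybound} is only derived there for small $\alpha$.
\end{itemize}
What the paper's version buys: explicit moment bounds for $w$ in terms of ${\rm TM}(\alpha,\Omega)$, $\alpha$ and $M$ alone, without Sobolev constants or ${\rm TM}(4\pi,\Omega)$.
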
 

\begin{proof}
Initially, a direct computation implies that the following expansion holds: 
\begin{align} \label{expansion1} 
\nonumber
\mathscr{E}_{\alpha,\Omega} (u_* + w)
&= \int_\Omega e^{\alpha u_*^2} \sum_{\ell =0}^\infty \frac{\alpha^\ell}{\ell !}
(2u_* w + w^2)^\ell \ud x \\ \nonumber 
& = \int_\Omega e^{\alpha u_*^2} \ud x + \alpha \int_\Omega e^{\alpha u_*^2} 
(2u_* w + w^2) \ud x + \frac{\alpha^2}{2} \int_\Omega e^{\alpha u_*^2} (4u_*^2 w^2 
+ 4u_* w^3 + w^4) \ud x\\\nonumber
&+ \sum_{\ell=3}^\infty \frac{\alpha^\ell}{\ell !} \int_\Omega e^{\alpha u_*^2} 
(2u_* w + w^2)^\ell \ud x\\
&:=\mathscr{E}_{\alpha,\Omega} (u_*)+\alpha\mathcal{I}^*_{\alpha,\Omega}(w)+T_1(w)+T_2(w)+T_3(w),
\end{align} 
where $\mathcal{I}^*_{\alpha,\Omega}$ is defined in \eqref{eq:integral_error} and
\[
T_1(w) = 2\alpha \int_\Omega u_* w e^{\alpha u_*^2} \ud x,
\]
\[
T_2(w) = \int_{\Omega} \left (2\alpha^2 u_* w^3 + 
\frac{\alpha^2}{2} w^4 \right ) e^{\alpha u_*^2} \ud x, 
\]
and
\[
T_3(w) = \sum_{\ell=3}^\infty \frac{\alpha^\ell}{\ell !} \int_\Omega e^{\alpha u_*^2} 
(2u_* w + w^2)^\ell \ud x. 
\]
Then
\[\mathscr{E}_{\alpha,\Omega} (u_*) - \mathscr{E}_{\alpha,\Omega} (u) =-T_1(w)-\alpha\mathcal{I}^*_{\alpha,\Omega}(w)-T_2(w)-T_3(w).\]
We analyze each of these terms individually.
We start with the first term, for which we use the first eigenvalue of the boundary problem \eqref{eq:TM_BVP}. 

\noindent{\bf Claim 1:} $T_1(w)=-\frac{\alpha}{\Lambda_1(\alpha)}\|\nabla w\|_{L^2(\Omega)}^2$.

\noindent First, observe
that since $u_*,u\in \mathcal{S}_1$, one has
\[1 = \int_\Omega |\nabla u|^2 \ud x = \int_\Omega |\nabla u_* + \nabla w|^2 \ud x =
\int_\Omega |\nabla u_*|^2 \ud x + 2 \int_\Omega \langle \nabla u_*, \nabla w \rangle
\ud x + \int_\Omega |\nabla w|^2 \ud x,\]
which, by rearranging, implies  
\begin{equation} \label{first_order1}  
\int_\Omega |\nabla w|^2 \ud x = -2 \int_\Omega \langle \nabla u_*, \nabla w
\rangle \ud x.
\end{equation} 
Second, since $u_*\in\mathcal{M}_\alpha(\Omega)$ is an extremal of the functional $\mathscr{E}_{\alpha,\Omega}$, it solves the 
boundary value problem \eqref{eq:TM_BVP} with Lagrange multiplier $\Lambda_1(\alpha)>0$ defined in \eqref{eq:first_TM_eigenvalue}, namely 
\begin{equation}\label{euler_lag_eqn}
		 \begin{cases}
			-\Delta u_*  = \Lambda_1(\alpha)  e^{\alpha u_{*}^2} u_* & \mbox{in} \quad \Omega\\
			u_*=0 & \mbox{on} \quad\partial \Omega.
		\end{cases} 
	\end{equation}
As a consequence of \eqref{first_order1} and 
\eqref{euler_lag_eqn}, we get
\begin{eqnarray} \label{first_order2} 
T_1(w) =  \frac{2\alpha}{\Lambda_1(\alpha)} \int_\Omega \langle \nabla u_*, \nabla w \rangle \ud x = -\frac{\alpha}{\Lambda_1(\alpha)} \int_\Omega |\nabla w|^2\ud x = -\frac{\alpha}{\Lambda_1(\alpha)} 
\| \nabla w \|_{L^2(\Omega)}^2. 
\end{eqnarray} 
This proves the first estimate.

Before analyzing the terms $T_2(w)$ and $T_3(w)$,
let us perform some preliminary computations. 
First, observe that the convexity of the function $t \mapsto t^\ell$ implies that 
for each $a,b >0$ we have 
\begin{equation}\label{convex_power}
\left ( \frac{a + b}{2} \right )^\ell \leq \frac{a^\ell + b^\ell}{2} \quad {\rm or} \quad
(a+b)^\ell \leq 2^{\ell-1} (a^\ell + b^\ell ).
\end{equation} 
Next, we use the Trudinger--Moser and H\"older 
inequalities to bound integrals of powers of $w$. 
More precisely, we have 
    \begin{align*} 
    {\rm TM}(\alpha,\Omega) \geq \int_\Omega \exp \left (\frac{\alpha w^2}{\| \nabla w\|_{L^2(\Omega)}^2} \right ) \ud x 
    & = \int_\Omega \sum_{j = 0}^\infty \frac{\alpha^j}{j ! 
    \| \nabla w \|_{L^2(\Omega)}^{2j} }w^{2j}\ud x\\
    &= \sum_{j = 0}^\infty \frac{\alpha^j}
    {j ! \| \nabla w\|_{L^2(\Omega)}^{2j}} 
    \int_\Omega w^{2j} \ud x \\ 
    & \geq \frac{\alpha^\ell}{\ell! \| \nabla w \|_{L^2(\Omega)} ^{2\ell}} 
    \int_\Omega w^{2\ell} \ud x\\
    &= \frac{\alpha^{\ell} \| w\|_{L^{2\ell}(\Omega)}^{2\ell}}
    {\ell! \| \nabla w\|_{L^2(\Omega)}^{2\ell} }
    \end{align*} 
    for any specific $\ell \in \mathbf{N}$. This is justified by the fact that $w^2 \geq 0$, 
    so each term in the sum is non-negative. We can rearrange this inequality to read 
    \begin{equation} \label{higher_order1}
    \int_\Omega |w|^{2\ell}\ud x \leq {\rm TM}(\alpha,\Omega)\frac{\ell!}{\alpha^\ell}
    \| \nabla w \|_{L^2(\Omega)}^{2\ell} . \end{equation}
    We can similarly control the integral of $w^{2\ell-1}$ using H\"older's inequality as follows: 
  \begin{align} \label{higher_order2} 
    \| w \|_{L^{2\ell-1}(\Omega)}^{2\ell -1} & =  \| w^{2\ell -1} \|_{L^1(\Omega)}    \leq \| \chi_{\Omega} \|_{L^{2\ell}(\Omega)} 
    \| w^{2\ell -1} \|_{L^{\frac{2\ell}{2\ell-1}}(\Omega)} \\ \nonumber 
    & = |\Omega|^{\frac{1}{2\ell}} \left ( \int_\Omega w^{2\ell} \ud x\right )^{\frac{2\ell-1}{2\ell}} 
    = |\Omega|^{\frac{1}{2\ell}} \left ( 
    \| w \|_{L^{2\ell}(\Omega)}^{2\ell} \right )^{\frac{2\ell-1}{2\ell}} \\ \nonumber 
    & \leq {\rm TM}(\alpha,\Omega)^{\frac{2\ell-1}{2\ell}} \frac{(\ell !)^{\frac{2\ell-1}{2\ell}}}
    {\alpha^{\frac{2\ell-1}{2}}} \| \nabla w\|_{L^2(\Omega)}^{2\ell -1}.
    \end{align} 

    We can proceed to estimate the second term. 
    
    \noindent{\bf Claim 2:} $T_2(w)=\mathcal{O}(\|\nabla w\|^3_{L^2(\Omega)})$ as $\|\nabla w\|_{L^2(\Omega)}\to0$.

    \noindent
    As a matter of fact, using \eqref{higher_order1} we see 
    \begin{equation} \label{higher_order8} 
    \left | \frac{\alpha^2}{2} \int_\Omega e^{\alpha u_*^2} w^4 \ud x \right | \leq 
    {\rm TM}(\alpha,\Omega) |\Omega |e^{\alpha M^2} \| \nabla w \|_{L^2(\Omega)}^4 \lesssim_{\alpha, \Omega, M}\| \nabla w\|_{L^2(\Omega)}^4.\end{equation} 
    Additionally, by means of \eqref{higher_order2}, we get
    \begin{equation} \label{higher_order9} 
    \left | 2\alpha^2 \int_\Omega u_*w^3 e^{\alpha u_*^2} \ud x \right | \leq 
    2^{7/4} \alpha ^{1/4} {\rm TM}(\alpha,\Omega)^{3/4} M e^{\alpha M^2} 
    \| \nabla w\|_{L^2(\Omega)}^3\lesssim_{\alpha, \Omega, M}\| \nabla w\|_{L^2(\Omega)}^3.\end{equation}
    This concludes the proof of the claim.

    At last, we can estimate the third term, which is slightly more technical.
    
    \noindent{\bf Claim 3:} $T_3(w)=\mathcal{O}(\|\nabla w\|^3_{L^2(\Omega)})$ as $\|\nabla w\|_{L^2(\Omega)}\to0$.

    \noindent Now, let $M = \max \{ 1, \| u_*\|_{L^\infty (\Omega)} \}$. Using \eqref{convex_power}
    we can bound $T_3(w)$ by the following three series: 
    \begin{align} \label{higher_order3} 
    |T_3(w)| & =  \left | \sum_{\ell=3}^\infty \frac{\alpha^\ell}{\ell !} \int_\Omega 
    e^{\alpha u_*^2} (2u_* w + w^2)^{\ell} \ud x \right | \\ \nonumber 
    & \leq  e^{\alpha M^2} \sum_{\ell=3}^\infty  \frac{\alpha^\ell 2^{\ell-1}}{\ell !} 
    \int_\Omega (2^\ell u_*^\ell |w|^\ell + |w|^{2\ell} ) \ud x \\ \nonumber 
    & \leq e^{\alpha M^2} \left [ \sum_{\ell=2}^\infty \frac{\alpha^{2\ell-1} 2^{4\ell-3} M^{2\ell-1} }
    {(2\ell-1) !} \int_\Omega |w|^{2\ell-1} \ud x + \sum_{\ell=2}^\infty \frac{\alpha^{2\ell} 2^{4\ell-1}
    M^{2\ell} }{(2\ell)!} \int_\Omega |w|^{2\ell} \ud x  \right ] \\ \nonumber 
    &+ e^{\alpha M^2} \sum_{\ell=3}^\infty \frac{\alpha^\ell 2^{\ell-1}}{\ell !}
    \int_\Omega |w|^{2\ell} \ud x\\ \nonumber 
    &:= e^{\alpha M^2} [S_1(w)+ S_2(w)+S_3(w)]. 
    \end{align} 
    Here we set
    \[
    S_1(w):= \sum_{\ell=2}^\infty \frac{\alpha^{2\ell-1} 2^{4\ell-3} M^{2\ell-1} }
    {(2\ell-1) !} \int_\Omega |w|^{2\ell-1} \ud x,
    \]
    \[
    S_2(w):=\sum_{\ell = 2}^\infty \frac{\alpha^{2\ell} 2^{4\ell-1}M^{2\ell} }{(2\ell)!}
    \int_\Omega |w|^{2\ell} \ud x,
    \]
    and
    \[
    S_3(w):=\sum_{\ell=3}^\infty \frac{\alpha^\ell 2^{\ell-1}}{\ell !}
    \int_\Omega |w|^{2\ell} \ud x.
    \]
    \noindent Next, we estimate each sum individually. First, we use \eqref{higher_order1} to bound the 
    last sum by a geometric series, so long as 
    $\| \nabla w \|_{L^2(\Omega)}^2 < \frac{1}{2} .$
    More precisely, one has
    \begin{align*} 
    S_3(w) & = \sum_{\ell=3}^\infty \frac{\alpha^\ell 2^{\ell-1}}{\ell !}
    \int_\Omega |w|^{2\ell} \ud x \\ \nonumber 
    & \leq {\rm TM}(\alpha,\Omega) \frac{|\Omega |}{2} \sum_{\ell=3}^\infty(2\| 
    \nabla w\|_{L^2(\Omega)}^2)^\ell \\ \nonumber 
    & = {\rm TM}(\alpha,\Omega)\frac{|\Omega|}{2} \left ( \frac{1}{1-2\|\nabla w\|_{L^2(\Omega)}^2}
    - 1 - 2\| \nabla w\|_{L^2(\Omega)}^2 - 4\| \nabla w\|_{L^2(\Omega)}^4 \right ) \\ \nonumber 
    & = {\rm TM}(\alpha,\Omega)\left(\frac{4\| \nabla w \|_{L^2(\Omega)}^6}{1-2\| \nabla w \|_{L^2(\Omega)}^2}\right). 
    \end{align*} 
    From this, we conclude
    \begin{equation} \label{higher_order4}
    |S_3(w)| \lesssim_{\alpha,\Omega} \| \nabla w \|_{L^2(\Omega)}^4.
    \end{equation}
    We can also use \eqref{higher_order1} to bound the second sum when $0<\alpha\ll1$. Also, when $\| \nabla w \|_{L^2(\Omega)} < 1$, we have  
    \begin{eqnarray*} 
    S_2(w) & = & \sum_{\ell = 2}^\infty \frac{\alpha^{2\ell} 2^{4\ell-1}M^{2\ell} }{(2\ell)!} 
    \int_\Omega |w|^{2\ell} \ud x \\  
    & \leq & {\rm TM}(\alpha,\Omega) |\Omega| \sum_{\ell=3}^\infty \frac{\alpha^\ell 2^{4\ell-1} \ell! M^{2\ell}}
    {(2\ell)!} \| \nabla w\|_{L^2(\Omega)}^{2\ell} \\  
    & \leq & {\rm TM}(\alpha,\Omega) \| \nabla w\|_{L^2(\Omega)}^4 \sum_{\ell=2}^\infty 
    \frac{\alpha^\ell 2^{4\ell-1} \ell! M^{2\ell}}{(2\ell)!}. 
    \end{eqnarray*} 
    One can show, using the ratio test, that this last sum converges; and so
    \begin{equation} \label{higher_order5} 
    |S_2(w)| \lesssim_{\alpha,\Omega,M} \| \nabla w \|_{L^2(\Omega)}^4.
    \end{equation}
    At last, we use \eqref{higher_order2} to bound $S_1(w)$. 
    In this case, we have 
    \begin{align*} 
    S_1(w) & = \sum_{\ell=2}^\infty \frac{\alpha^{2\ell-1} 2^{4\ell-3} M^{2\ell-1} }
    {(2\ell-1) !} \int_\Omega |w|^{2\ell-1} \ud x \\ 
    & \leq |\Omega| \sum_{\ell=2}^\infty \frac{\alpha^{\frac{2\ell-2}{2}} 2^{4\ell-3} 
    M^{2\ell-1} {\rm TM}(\alpha,\Omega)^{\frac{2\ell-1}{2\ell}} (\ell!)^{\frac{2\ell-1}{2\ell}}}
    {(2\ell-1)!} \| \nabla w\|_{L^2(\Omega)}^{2\ell-1} \\ 
    & \leq  \| \nabla w \|_{L^2(\Omega)}^3 \sum_{\ell=2}^\infty 
    {\rm TM}(\alpha,\Omega)^{\frac{2\ell-1}{2\ell}}\frac{\alpha^{\frac{2\ell-2}{2}} 2^{4\ell-3} M^{2\ell-1} (\ell!)^{\frac{2\ell-1}{2\ell}}}{(2\ell-1)!}. 
    \end{align*} 
    Once again, one can use the ratio test to show that this last sum converges, which in 
    turn gives us 
    \begin{equation} \label{higher_order6} 
    S_1(w) \lesssim_{\alpha,\Omega,M} \| \nabla w \|_{L^2(\Omega)}^3. 
    \end{equation} 
    Combining \eqref{higher_order4}, \eqref{higher_order5} and \eqref{higher_order6}
    we now see 
    \begin{equation} \label{higher_order7} 
    |T_3(w)| \lesssim_{\alpha, \Omega, M} \| \nabla w \|_{L^2(\Omega)}^3.
    \end{equation} 
    This concludes the proof of the claim.
    
    Finally, by combining \eqref{first_order2}, \eqref{higher_order7}, \eqref{higher_order8}, 
    and \eqref{higher_order9} from Claims 1, 2, 3, respectively, we obtain 
    the desired asymptotic expansion \eqref{eq:taylor_expansion}. 
    The lemma is proved.
\end{proof}

\subsection{Second variation}\label{subsec:second_variation}

Now, we prove some properties of the second variation of the Trudinger--Moser functional.

Let $u_*\in\mathcal{M}_\alpha(\Omega)\cap\mathcal{S}_1$ be a fixed optimizer and
$\{u_k\}\subset\mathcal{S}_1$ be a violating sequence with $u_k\to u_*$ in
$W^{1,2}_0(\Omega)$, that is, \eqref{contra0} holds. Let $\{w_k\}_{k\in\mathbf{N}}$ be the sequence of deviations $w_k=u_k-u_*$, and denote by $\{\widehat{w}_k\}_{k\in\mathbf{N}}$ its normalization defined in \eqref{eq:normalized_deviations}.

We introduce the Lagrangian functional
\begin{equation}\label{eq:Lagrangian}
\mathscr{F}_{\alpha,\Omega}(u)
:=\frac{1}{2}\int_{\Omega}|\nabla u|^2\ud x
-\frac{\Lambda_1(\alpha)}{2\alpha}\int_{\Omega}e^{\alpha u^2}\ud x,
\end{equation}
whose critical points on $\mathcal{S}_1$ correspond to solutions of the
Euler--Lagrange equation~\eqref{eq:TM_BVP}.
The tangent space to $\mathcal{S}_1$ at $u_*$ is
\[
T_{u_*}\mathcal{S}_1
:=\Bigl\{\,w\in W^{1,2}_0(\Omega):
\int_\Omega\langle \nabla u_*,\nabla w\rangle \ud x=0\,\Bigr\}.
\]

The first variation of $\mathscr{F}_{\alpha,\Omega}$ at $u_*$ is
\[
\mathscr{F}_{\alpha,\Omega}'(u_*)(v)
=\int_{\Omega}\langle \nabla u_*,\nabla v\rangle \ud x
-\Lambda_1(\alpha)\int_{\Omega}e^{\alpha u_*^2}u_*v\ud x,
\]
and the second variation is the symmetric bilinear form
\[
\mathscr{F}_{\alpha,\Omega}''(u_*)(w,v)
=\int_{\Omega}\langle \nabla w,\nabla v\rangle \ud x
-\Lambda_1(\alpha)\int_{\Omega}(1+2\alpha u_*^2)
e^{\alpha u_*^2}wv\ud x.
\]
We denote this quadratic form by
\begin{equation}\label{eq:weighted-ip}
\mathscr{Q}_{\alpha,\Omega}(w,v)
:=\int_\Omega\langle \nabla w,\nabla v\rangle \ud x
-\Lambda_1(\alpha)\,\langle w,v\rangle_{\alpha,\Omega},
\quad {\rm with} \quad
\langle w,v\rangle_{\alpha,\Omega}
:=\int_\Omega(1+2\alpha u_*^2)e^{\alpha u_*^2}wv\ud x.
\end{equation}

With these notations, we can summarize the structural properties of the second variation.

\begin{lemma}[Second variation kernel]\label{lem:Qalpha-kernel-extension}
Let $\Omega\subset\mathbf{R}^2$ be a smooth bounded domain and $\alpha\in(0,4\pi]$.
Assume that $v_*\in\mathcal{S}_1$ is the first (normalized) eigenfunction of
\eqref{eq:alpha_BVP} and that $u_*\in\mathcal{M}_\alpha(\Omega)\cap\mathcal{S}_1$
is a Trudinger--Moser optimizer.
If $\{u_k\}\subset\mathcal{S}_1$ is an optimizing sequence such that
$u_k\to u_*$ in $W^{1,2}_0(\Omega)$, then the following properties hold:
\begin{enumerate}[label=\textup{(\roman*)}]
\item For all $w\in T_{u_*}\mathcal{S}_1$, one has $\mathscr{Q}_{\alpha,\Omega}(w,w)\ge0$
\item For each $\widehat w_*
\in T_{u_*}\mathcal{S}_1$ satisfying
\(\mathscr{Q}_{\alpha,\Omega}(\widehat w_*,\widehat w_*)=0\), it follows that $\mathscr{Q}_{\alpha,\Omega}(\widehat w_*,v)=0$ for any $v\in T_{u_*}\mathcal{S}_1$, that is,
\begin{equation}\label{eq:EL-on-tangent}
\int_\Omega\langle \nabla\widehat w_*,\nabla v\rangle \ud x
=\Lambda_1(\alpha)\int_\Omega(1+2\alpha u_*^2)e^{\alpha u_*^2}
\widehat w_*v\ud x
\quad {\rm for \ all} \quad v\in T_{u_*}\mathcal{S}_1.
\end{equation}
\item One has $v_*\notin T_{u_*}\mathcal{S}_1$ and $W^{1,2}_0(\Omega)
=T_{u_*}\mathcal{S}_1\oplus\mathrm{span}\{v_*\}.$
\end{enumerate}
\end{lemma}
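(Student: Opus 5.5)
The plan is to treat the three items separately. Item (i) is the second-order necessary condition for a constrained maximum. Item (ii) is a general fact about positive semidefinite quadratic forms. Item (iii) follows from positivity of the first eigenfunctions together with a codimension-one argument. The hypothesis on the optimizing sequence $\{u_k\}$ plays no real role: (i)--(iii) depend only on $u_*$ being a maximizer on $\mathcal{S}_1$ and on $v_*$.

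For (i), fix $w\in T_{u_*}\mathcal{S}_1$ and consider the curve $\gamma(t)=(u_*+tw)/\|\nabla(u_*+tw)\|_{L^2(\Omega)}\in\mathcal{S}_1$. Since $\int_\Omega\langle\nabla u_*,\nabla w\rangle\ud x=0$, we have $\|\nabla(u_*+tw)\|^2_{L^2(\Omega)}=1+t^2\|\nabla w\|^2_{L^2(\Omega)}$. Hence
\[
\gamma(t)=u_*+tw-\tfrac{t^2}{2}\|\nabla w\|_{L^2(\Omega)}^2u_*+\mathcal{O}(t^3)
\]
in $W^{1,2}_0(\Omega)$. Because $u_*$ maximizes $\mathscr{E}_{\alpha,\Omega}$ on $\mathcal{S}_1$, the function $t\mapsto\mathscr{E}_{\alpha,\Omega}(\gamma(t))$ has a maximum at $t=0$, so its second derivative there is $\le0$. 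I compute it as follows. The first-order term is $2\alpha\int_\Omega e^{\alpha u_*^2}u_*\,\gamma''(0)\ud x$. Using $\gamma''(0)=-\|\nabla w\|^2_{L^2(\Omega)}u_*$ and \eqref{eq:first_TM_eigenvalue}, this term equals $-2\alpha\|\nabla w\|^2_{L^2(\Omega)}/\Lambda_1(\alpha)$. The genuinely quadratic term equals $2\alpha\langle w,w\rangle_{\alpha,\Omega}$. Adding the two gives
\[
\frac{d^2}{dt^2}\Big|_{t=0}\mathscr{E}_{\alpha,\Omega}(\gamma(t))=-\frac{2\alpha}{\Lambda_1(\alpha)}\mathscr{Q}_{\alpha,\Omega}(w,w)\le0,
\]
which is (i). Alternatively, one could plug $u=\gamma(t)$ into Lemma~\ref{lm:taylor_expansion} and let $t\to0$.

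The step I expect to be the main obstacle is justifying twice differentiability of $t\mapsto\mathscr{E}_{\alpha,\Omega}(u_*+tw)$ at $t=0$ when $\alpha=4\pi$. Here I would use that $u_*\in L^\infty(\Omega)$, which follows from elliptic regularity for \eqref{euler_lag_eqn} since $e^{\alpha u_*^2}\in L^p(\Omega)$ for every $p$ by Moser--Trudinger applied to a single fixed function. Then the bound $e^{\alpha(u_*+tw)^2}\le e^{2\alpha u_*^2}e^{2\alpha t^2w^2}$, together with the fact that $e^{\beta w^2}\in L^1(\Omega)$ for every $\beta$ (Trudinger), gives an integrable dominating function for $|t|$ small. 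This allows differentiation under the integral sign.

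For (ii), I will use the standard argument for semidefinite forms. By (i), for $v\in T_{u_*}\mathcal{S}_1$ and $s\in\mathbf{R}$,
\[
0\le\mathscr{Q}_{\alpha,\Omega}(\widehat w_*+sv,\widehat w_*+sv)=2s\,\mathscr{Q}_{\alpha,\Omega}(\widehat w_*,v)+s^2\mathscr{Q}_{\alpha,\Omega}(v,v).
\]
Letting $s\to0^\pm$ forces $\mathscr{Q}_{\alpha,\Omega}(\widehat w_*,v)=0$, which is exactly \eqref{eq:EL-on-tangent}.

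For (iii), Theorem~\ref{thm:krein-rutman} gives $v_*>0$ in $\Omega$, after possibly replacing $v_*$ by $-v_*$. Moreover $u_*>0$ by Lemma~\ref{lm:eigenvalue_convergence}. Testing \eqref{euler_lag_eqn} against $v_*$ gives
\[
\int_\Omega\langle\nabla u_*,\nabla v_*\rangle\ud x=\Lambda_1(\alpha)\int_\Omega e^{\alpha u_*^2}u_*v_*\ud x>0,
\]
so $v_*\notin T_{u_*}\mathcal{S}_1$. Since $T_{u_*}\mathcal{S}_1$ is the kernel of the nonzero bounded functional $\ell(w)=\int_\Omega\langle\nabla u_*,\nabla w\rangle\ud x$, every $w\in W^{1,2}_0(\Omega)$ decomposes uniquely as
\[
w=\Bigl(w-\tfrac{\ell(w)}{\ell(v_*)}v_*\Bigr)+\tfrac{\ell(w)}{\ell(v_*)}v_*,
\]
with the first summand in $T_{u_*}\mathcal{S}_1$. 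This gives the direct sum decomposition $W^{1,2}_0(\Omega)=T_{u_*}\mathcal{S}_1\oplus\mathrm{span}\{v_*\}$.
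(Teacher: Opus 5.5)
Your proposal is correct. Parts (i) and (ii) are the paper's argument made explicit, while part (iii) takes a genuinely different route.

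In (i), the paper simply notes that $u_*$ minimizes the Lagrangian $\mathscr{F}_{\alpha,\Omega}$ on $\mathcal{S}_1$. The ambient Hessian of that Lagrangian is $\mathscr{Q}_{\alpha,\Omega}$, with the multiplier already built in. Your curve computation recovers the same multiplier term from $\gamma''(0)=-\|\nabla w\|_{L^2(\Omega)}^2u_*$. Your domination argument at $\alpha=4\pi$ also supplies a differentiability justification the paper omits. In (ii), your polarization argument is the paper's statement that a minimizer of $\mathscr{Q}_{\alpha,\Omega}$ on $T_{u_*}\mathcal{S}_1$ is a critical point.

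In (iii), the paper computes $\mathscr{Q}_{\alpha,\Omega}(v_*,v_*)=1-\Lambda_1(\alpha)/\lambda_1(\alpha)<0$ and invokes (i). This needs the strict inequality $\lambda_1(\alpha)<\Lambda_1(\alpha)$, which the paper quotes from Lemma~\ref{lm:spectral_gap} (stated only for small $\alpha$). The inequality in fact holds for all $\alpha\in(0,4\pi]$: evaluate both Rayleigh quotients at $u_*$ and use $(1+2\alpha u_*^2)e^{\alpha u_*^2}>e^{\alpha u_*^2}$ in $\Omega$. What the paper's route buys is the identification of $v_*$ as a negative direction of $\mathscr{Q}_{\alpha,\Omega}$, which is the structure exploited in Lemma~\ref{lem:Q_coercivity}.

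Your route instead tests the Euler--Lagrange equation against $v_*$ and needs only that $u_*$ and $v_*$ have strict signs. It is independent of (i) and of any eigenvalue comparison. It also shows more generally that no nontrivial one-signed function lies in $T_{u_*}\mathcal{S}_1$; the paper itself uses this same positivity in the proof of Lemma~\ref{lem:Q_coercivity}.

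One minor point: the paper only shows that $u_*$ is one-signed and then normalizes $u_*>0$ without loss of generality. This changes nothing for you, since $T_{-u_*}\mathcal{S}_1=T_{u_*}\mathcal{S}_1$ and you only need $\ell(v_*)\neq0$.
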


\begin{proof}
We divide the proof into several steps as follows.

\noindent\textrm{(i)}
Since $u_*$ is a global minimizer of $\mathscr{F}_{\alpha,\Omega}$ on the
Hilbert manifold $\mathcal{S}_1$, its second variation is nonnegative on
$T_{u_*}\mathcal{S}_1$.
Thus $\mathscr{Q}_{\alpha,\Omega}(w,w)\ge0$ for all
$w\in T_{u_*}\mathcal{S}_1$.

\noindent\textrm{(ii)}
From (i), $\widehat w_*$ is a minimizer of
$\mathscr{Q}_{\alpha,\Omega}(\cdot,\cdot)$ on $T_{u_*}\mathcal{S}_1$, and
so it is also a critical point, that is,
$\mathscr{Q}_{\alpha,\Omega}(\widehat w_*,v)=0$ for any $v\in T_{u_*}\mathcal{S}_1$.

\noindent\textrm{(iii)}
For the positive first Krein--Rutman eigenfunction $v_*>0$ of~\eqref{eq:alpha_BVP}
(normalized by $\|\nabla v_*\|_{L^2(\Omega)}=1$), since $v_*$ solves \eqref{eq:alpha_BVP} we have $\|\nabla v_*\|_{L^2}^2=\lambda_1(\alpha)\langle v_*,v_*\rangle_{\alpha,\Omega}$, and thus a direct computation gives
\[
\mathscr{Q}_{\alpha,\Omega}(v_*,v_*)
=\|\nabla v_*\|_{L^2}^2 - \Lambda_1(\alpha)\langle v_*,v_*\rangle_{\alpha,\Omega}
=\left(1-\frac{\Lambda_1(\alpha)}{\lambda_1(\alpha)}\right)<0,
\]
where we used $\lambda_1(\alpha)<\Lambda_1(\alpha)$ (cf.\ Lemma~\ref{lm:spectral_gap}).
By (i), $\mathscr{Q}_{\alpha,\Omega}(w,w)\ge0$ for all $w\in T_{u_*}\mathcal{S}_1$,
hence $v_*\notin T_{u_*}\mathcal{S}_1$.
Since $T_{u_*}\mathcal{S}_1$ is a closed codimension-one subspace of
$W^{1,2}_0(\Omega)$, the direct sum decomposition in (iii) follows.
\end{proof}

\begin{lemma}[Coercivity on the tangent space]\label{lem:Q_coercivity}
Let $\Omega\subset\mathbf{R}^2$ be a smooth, bounded domain.
Let $u_*\in\mathcal{M}_\alpha(\Omega)\cap\mathcal{S}_1$ be a Trudinger--Moser optimizer and
let $v_*\in\mathcal{S}_1$ denote the first (positive, normalized) eigenfunction
of the linearized problem~\eqref{eq:alpha_BVP}.
Then, for $\alpha\in(0,\alpha_0)$ with $\alpha_0>0$ given by Lemma~\ref{lm:spectral_gap},
there exists a constant $c(\alpha)>0$ such that
\begin{equation}\label{eq:Q_coercivity}
\mathscr{Q}_{\alpha,\Omega}(w,w)\ge c(\alpha)\,\|w\|_{L^2(\Omega)}^2
\quad\text{for all }w\in T_{u_*}\mathcal{S}_1.
\end{equation}
In particular, the quadratic form $\mathscr{Q}_{\alpha,\Omega}$ has trivial kernel on $T_{u_*}\mathcal{S}_1$.
\end{lemma}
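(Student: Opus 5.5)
We argue spectrally, using the min--max characterization of $\lambda_2(\alpha)$ together with the spectral gap of Lemma~\ref{lm:spectral_gap}. Let $v_*$ be the positive first eigenfunction of \eqref{eq:alpha_BVP}, and set
\[
V:=\{w\in W^{1,2}_0(\Omega):\langle w,v_*\rangle_{\alpha,\Omega}=0\}.
\]
This is the orthogonal complement of $v_*$ for the weighted inner product. By the Courant--Fischer principle,
\[
\int_\Omega|\nabla w|^2\ud x\ge\lambda_2(\alpha)\langle w,w\rangle_{\alpha,\Omega}\quad\text{for all } w\in V.
\]
For general $w\in W^{1,2}_0(\Omega)$ we decompose $w=a v_*+z$ with $z\in V$. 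The two pieces are orthogonal both in $\langle\cdot,\cdot\rangle_{\alpha,\Omega}$ and in the Dirichlet inner product, because $v_*$ is an eigenfunction. Consequently
\[
\mathscr{Q}_{\alpha,\Omega}(w,w)=a^2\mathscr{Q}_{\alpha,\Omega}(v_*,v_*)+\mathscr{Q}_{\alpha,\Omega}(z,z),
\]
with $\mathscr{Q}_{\alpha,\Omega}(v_*,v_*)=1-\Lambda_1/\lambda_1<0$ and
\[
\mathscr{Q}_{\alpha,\Omega}(z,z)\ge\Bigl(1-\tfrac{\Lambda_1}{\lambda_2}\Bigr)\|\nabla z\|_{L^2}^2\ge0 .
\]
So $\mathscr{Q}_{\alpha,\Omega}$ has exactly one negative direction, and it is positive definite on $V$.

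The key step is to exploit the tangency constraint. The Euler--Lagrange equation \eqref{euler_lag_eqn} shows that $w\in T_{u_*}\mathcal{S}_1$ is equivalent to $\int_\Omega e^{\alpha u_*^2}u_*w\ud x=0$. I would argue by contradiction with a compactness argument, since a direct computation of the restricted form seems hard. Suppose $w_k\in T_{u_*}\mathcal{S}_1$ satisfy $\|w_k\|_{L^2}=1$ and $\mathscr{Q}_{\alpha,\Omega}(w_k,w_k)\to0$. The identity $\|\nabla w\|^2=\mathscr{Q}+\Lambda_1\langle w,w\rangle_{\alpha,\Omega}$ and the $L^\infty$ bound \eqref{eq:uinftybound} give a uniform $W^{1,2}_0$ bound. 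Passing to a subsequence, $w_k\rightharpoonup w_\infty$ weakly in $W^{1,2}_0$ and strongly in $L^2$. Hence $\|w_\infty\|_{L^2}=1$, and $w_\infty\in T_{u_*}\mathcal{S}_1$ because this space is weakly closed. Weak lower semicontinuity of the Dirichlet term, combined with strong convergence of the weighted term (the weight is bounded), gives $\mathscr{Q}_{\alpha,\Omega}(w_\infty,w_\infty)\le0$. By Lemma~\ref{lem:Qalpha-kernel-extension}(i) this forces $\mathscr{Q}_{\alpha,\Omega}(w_\infty,w_\infty)=0$.

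It therefore suffices to show that the kernel of $\mathscr{Q}_{\alpha,\Omega}$ on $T_{u_*}\mathcal{S}_1$ is trivial, and this is where I expect the main obstacle. By Lemma~\ref{lem:Qalpha-kernel-extension}(ii), $w_\infty$ satisfies \eqref{eq:EL-on-tangent} on $T_{u_*}\mathcal{S}_1$. Using the splitting of Lemma~\ref{lem:Qalpha-kernel-extension}(iii), there is a Lagrange multiplier $\beta$ with
\[
-\Delta w_\infty-\Lambda_1(1+2\alpha u_*^2)e^{\alpha u_*^2}w_\infty=\beta\,e^{\alpha u_*^2}u_* .
\]
Expand $w_\infty$ and $e^{\alpha u_*^2}u_*$ in the $\langle\cdot,\cdot\rangle_{\alpha,\Omega}$-orthonormal eigenbasis of \eqref{eq:alpha_BVP}. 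Since $\Lambda_1$ is not an eigenvalue (Lemma~\ref{lm:spectral_gap}), the operator $-\Delta-\Lambda_1(1+2\alpha u_*^2)e^{\alpha u_*^2}$ is invertible. If $\beta=0$, then $w_\infty=0$, which contradicts $\|w_\infty\|_{L^2}=1$.

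The case $\beta\neq0$ is the delicate one. Here $w_\infty=\beta R f$, where $R$ is the resolvent and $f:=e^{\alpha u_*^2}u_*$. The tangency condition gives $0=\int f w_\infty=\beta\langle Rf,f\rangle$, so $\langle Rf,f\rangle=0$. I would then compute
\[
\langle Rf,f\rangle=\sum_j\frac{c_j^2}{\lambda_j-\Lambda_1},\qquad c_j=\int f\phi_j ,
\]
where $\phi_j$ are the eigenfunctions of \eqref{eq:alpha_BVP}. I would try to show this sum is strictly negative for $\alpha$ small. The $j=1$ term is negative, since $\lambda_1<\Lambda_1$ and $c_1>0$ by positivity of $f$ and $\phi_1$. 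As $\alpha\to0$, the function $f/\|f\|$ converges to the first Dirichlet eigenfunction, so the coefficients $c_j$ for $j\ge2$ become small relative to $c_1$. Meanwhile the denominators $\lambda_j-\Lambda_1$ stay bounded below by $\tfrac12(\mu_2-\mu_1)$ for $j\ge2$, whereas $|\lambda_1-\Lambda_1|$ tends to $0$. Making these rates quantitative is the hard part. Once it is done, $\langle Rf,f\rangle\neq0$ forces $\beta=0$, giving the contradiction, and hence a constant $c(\alpha)>0$ as claimed.
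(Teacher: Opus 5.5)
Your reduction is correct. The splitting $w=av_*+z$, the compactness argument producing a nonzero $w_\infty\in T_{u_*}\mathcal{S}_1$ with $\mathscr{Q}_{\alpha,\Omega}(w_\infty,w_\infty)=0$, the multiplier equation, the invertibility of $-\Delta-\Lambda_1(1+2\alpha u_*^2)e^{\alpha u_*^2}$, and the formula $\int_\Omega f\,Rf\ud x=\sum_j c_j^2/(\lambda_j-\Lambda_1)$ all hold. But the proof stops at the step that carries the content of the lemma. You never show $\int_\Omega f\,Rf\ud x\neq0$; you only say you would try. Moreover, the strategy you sketch, showing that the $c_j$ with $j\ge2$ are small relative to $c_1$ and comparing rates, aims at the wrong thing. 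As written, the lemma is not proved.

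No rates are needed. Put $g:=u_*/(1+2\alpha u_*^2)$, so that $c_j=\int_\Omega f\phi_j\ud x=\langle g,\phi_j\rangle_{\alpha,\Omega}$. Bessel's inequality in the weighted inner product gives $\sum_{j\ge2}c_j^2\le\langle g,g\rangle_{\alpha,\Omega}\le\int_\Omega u_*^2e^{\alpha u_*^2}\ud x=1/\Lambda_1(\alpha)$. Since $\lambda_j\ge\lambda_2>\Lambda_1$ for $j\ge2$, this yields
\[
\int_\Omega f\,Rf\ud x\le-\frac{c_1^2}{\Lambda_1(\alpha)-\lambda_1(\alpha)}+\frac{1}{\Lambda_1(\alpha)\bigl(\lambda_2(\alpha)-\Lambda_1(\alpha)\bigr)} .
\]
The last term stays bounded, while $\Lambda_1-\lambda_1\to0$. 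So the sum is negative for small $\alpha$, provided $c_1$ is bounded below uniformly as $\alpha\to0$. That uniform lower bound is the ingredient you are actually missing. It comes from the fact you mention, but applied to $c_1$ rather than to the decay of the $c_j$. The uniform $W^{2,2}$ bound in the proof of Lemma~\ref{lm:eigenvalue_convergence} forces $u_*$ and $v_*$ to converge to the normalized first Dirichlet eigenfunction, so $c_1\to\mu_1^{-1/2}$. The paper relies on the same fact when it asserts $C_0(\alpha)\to C_0(0)<\infty$.

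Writing $c_1^2=\langle v_*,g\rangle_{\alpha,\Omega}^2/\langle v_*,v_*\rangle_{\alpha,\Omega}$ and using the bound $\sum_{j\ge2}c_j^2\le\langle g,g\rangle_{\alpha,\Omega}$, the condition $\int_\Omega f\,Rf\ud x<0$ becomes exactly the paper's condition $C_1(\alpha)>0$. The paper reaches it directly, without compactness, multipliers or resolvents. The tangency condition reads $\langle w,g\rangle_{\alpha,\Omega}=0$, so in $w=tv_*+w^\perp$ one has $t=-\langle w^\perp,g\rangle_{\alpha,\Omega}/\langle v_*,g\rangle_{\alpha,\Omega}$. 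Cauchy--Schwarz then gives $t^2\le C_0(\alpha)\langle w^\perp,w^\perp\rangle_{\alpha,\Omega}$, whence $\mathscr{Q}_{\alpha,\Omega}(w,w)\ge C_1(\alpha)\langle w^\perp,w^\perp\rangle_{\alpha,\Omega}$, with $C_1(\alpha)\to\mu_2-\mu_1$ and an explicit constant.
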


\begin{proof}
By Lemma~\ref{lem:Qalpha-kernel-extension}\textup{(iii)},
$v_*\notin T_{u_*}\mathcal{S}_1$ and
$W^{1,2}_0(\Omega)=T_{u_*}\mathcal{S}_1\oplus\mathrm{span}\{v_*\}$.
We decompose $w=t\,v_*+w^\perp$ with
$\langle w^\perp,v_*\rangle_{\alpha,\Omega}=0$.
Since $v_*$ solves~\eqref{eq:alpha_BVP} with eigenvalue $\lambda_1(\alpha)$
and the cross terms vanish by orthogonality, a direct computation gives us
\begin{align*}
\mathscr{Q}_{\alpha,\Omega}(w,w)
&=t^2\left(\lambda_1(\alpha)-\Lambda_1(\alpha)\right)
\langle v_*,v_*\rangle_{\alpha,\Omega}
+\int_\Omega|\nabla w^\perp|^2\,\ud x
-\Lambda_1(\alpha)\langle w^\perp,w^\perp\rangle_{\alpha,\Omega}.
\end{align*}
By the Courant--Fischer characterization of $\lambda_2(\alpha)$, one has
\[
\int_\Omega|\nabla w^\perp|^2\,\ud x
\ge\lambda_2(\alpha)\,\langle w^\perp,w^\perp\rangle_{\alpha,\Omega},
\]
which yields
\begin{equation}\label{eq:Q_lower}
\mathscr{Q}_{\alpha,\Omega}(w,w)
\ge t^2\left(\lambda_1(\alpha)-\Lambda_1(\alpha)\right)
\langle v_*,v_*\rangle_{\alpha,\Omega}
+\left(\lambda_2(\alpha)-\Lambda_1(\alpha)\right)
\langle w^\perp,w^\perp\rangle_{\alpha,\Omega}.
\end{equation}
We now use the tangent constraint to control the coefficient $t$.
Since $w\in T_{u_*}\mathcal{S}_1$, one has
$\int_\Omega\nabla u_*\!\cdot\!\nabla w\,\ud x=0$, which by the
Euler--Lagrange equation~\eqref{eq:TM_BVP} reads
$\int_\Omega e^{\alpha u_*^2}u_*\,w\,\ud x=0$.
We set $f:=u_*/(1+2\alpha u_*^2)$, so that the tangent constraint becomes
$\langle w,f\rangle_{\alpha,\Omega}=0$, from which we obtain
\[
t=-\frac{\langle w^\perp,f\rangle_{\alpha,\Omega}}
{\langle v_*,f\rangle_{\alpha,\Omega}}.
\]
We observe that $\langle v_*,f\rangle_{\alpha,\Omega}
=\int_\Omega v_*\,u_*\,e^{\alpha u_*^2}\,\ud x>0$
since $v_*>0$ and $u_*>0$ in $\Omega$.
It follows from the Cauchy--Schwarz inequality that
\[
t^2\le\frac{\langle f,f\rangle_{\alpha,\Omega}}
{\langle v_*,f\rangle_{\alpha,\Omega}^2}\,
\langle w^\perp,w^\perp\rangle_{\alpha,\Omega}
=:C_0(\alpha)\,\langle w^\perp,w^\perp\rangle_{\alpha,\Omega}.
\]
Substituting into~\eqref{eq:Q_lower}, one has
\begin{align*}
\mathscr{Q}_{\alpha,\Omega}(w,w)
&\ge\left[\lambda_2(\alpha)-\Lambda_1(\alpha)
+C_0(\alpha)\left(\lambda_1(\alpha)-\Lambda_1(\alpha)\right)
\langle v_*,v_*\rangle_{\alpha,\Omega}\right]
\langle w^\perp,w^\perp\rangle_{\alpha,\Omega}\\
&:=C_1(\alpha)\langle w^\perp,w^\perp\rangle_{\alpha,\Omega}.
\end{align*}
By Lemma~\ref{lm:spectral_gap} and Lemma~\ref{lm:eigenvalue_convergence},
as $\alpha\to0$ one has $\lambda_j(\alpha)\to\mu_j$ for $j=1,2$,
$\Lambda_1(\alpha)\to\mu_1$, and $C_0(\alpha)\to C_0(0)<\infty$.
Therefore, one has the convergence
\[
\lim_{\alpha\to0}C_1(\alpha)=\mu_2-\mu_1>0,
\]
which is the Dirichlet spectral gap of $\Omega$,
and in particular remains strictly positive for $\alpha\in(0,\alpha_0)$ with $0<\alpha_0\ll1$
sufficiently small.
Since $t^2\le C_0(\alpha)\,\langle w^\perp,w^\perp\rangle_{\alpha,\Omega}$,
it follows that
\[
\|w\|_{L^2(\Omega)}^2\lesssim\langle w^\perp,w^\perp\rangle_{\alpha,\Omega},
\]
and estimate~\eqref{eq:Q_coercivity} is proved.
\end{proof}

\subsection{General subcritical case}\label{subsec:generic_deg}
    At last, we can present the main result of this section, which is the proof of the stability estimate for small subcritical growth parameters. 

    \begin{proof}[Proof of Theorem~\ref{thm:subcritical_stability}]
        The proof is by contradiction. 
        Suppose that there exists a sequence $\{u_k\}_{k\in\mathbf{N}} \subset \mathcal{S}_1$, an optimizing sequence for the Trudinger--Moser inequality, such that $\|\nabla u_k\|_{L^2(\Omega)}= 1$ and \eqref{contra0} holds.
        Notice that by Lemma~\ref{lm:sphere_to_ball}, it is enough to work on the unit sphere $\mathcal{S}_1\subset W^{1,2}_0(\Omega)$.
    
	From Lemma~\ref{lm:compactness}, up to a subsequence there exists a maximizer $u_*\in \mathcal{M}_\alpha(\Omega)\cap \mathcal{S}_1$ such that $u_k \rightarrow u_*$ in $W^{1,2}_0(\Omega)$ and \eqref{contra0} still hold, that is,
    \[
    \lim_{k\rightarrow \infty}\frac{ \mathscr{E}_{\alpha,\Omega}(u_*) - \mathscr{E}_{\alpha,\Omega}(u_k)}{\|{\nabla (u_*-u_k)}\|_{L^2(\Omega)}^2}=0.
    \]
    In particular, since 
    \[{\rm dist}_\alpha(u_k,\mathcal{M}_\alpha(\Omega))=\inf_{v \in \mathcal{M}_\alpha(\Omega)} \|{\nabla (v-u_k)}\|_{L^2(\Omega)}^2 \leq\|{\nabla (u_*-u_k)}\|_{L^2(\Omega)}^2={\rm dist}_\alpha(u_k,u_*) \quad {\rm for} \quad k\in \mathbf{N},\]
    it follows
	\[
	\frac{ {\rm TM}(\alpha,\Omega) - \mathscr{E}_{\alpha,\Omega}(u_k)}{ \inf_{v \in \mathcal{M}_\alpha(\Omega)} \|{\nabla (v-u_k)}\|_{L^2(\Omega)}^2}\geq \frac{\mathscr{E}_{\alpha,\Omega}(u_*) - \mathscr{E}_{\alpha,\Omega}(u_k)}{\|{\nabla (u_*-u_k  )}\|_{L^2(\Omega)}^2}\rightarrow 0 \quad {\rm as} \quad k\rightarrow\infty.
	\]
	
	Again, by the maximum principle, we may assume without loss of generality that $u_*>0$. Next, we set $w_k=u_k-u_*$ and $\{\widehat{w}_k\}_{k\in\mathbf{N}}$ given by \eqref{eq:normalized_deviations}.
    Using Lemma~\ref{lm:taylor_expansion}, we can expand the Trudinger--Moser functional near an optimizer to obtain 
	\begin{equation} \label{final_estimate1}  
		\mathscr{E}_{\alpha,\Omega} (u_k) =  \mathscr{E}_{\alpha,\Omega}(u_*) - \frac{\alpha}{\Lambda_1(\alpha)}
		\| \nabla w_k\|_{L^2(\Omega)}^2 + \alpha \mathcal{I}_{\alpha,\Omega}^*(w_k) + \mathcal{O}(\|\nabla w_k\|_{L^2(\Omega)}^3) \quad {\rm as} \quad k\to \infty.
	\end{equation} 
   
    \noindent{\bf Claim 1:} There exists $\widehat{w}_*\in \overline{\mathcal{B}}_1$ such that $\widehat{w}_k\rightharpoonup \widehat{w}_*$ weakly and $\widehat{w}_*$ satisfies 
    \[
    \mathcal{I}_{\alpha,\Omega}^*(\widehat{w}_*)=\int_\Omega \widehat{w}_*^2(1+2\alpha u_*^2)e^{\alpha u_*^2} \ud x=\frac{1}{\Lambda_1(\alpha)} \quad {\rm for} \quad \alpha\in (0,\alpha_0).
    \]

    \noindent First, since $\sup_{k\in\mathbf{N}}\|\nabla w_k\|_{L^2(\Omega)}<\infty$, there exists $\widehat{w}_*\in \overline{\mathcal{B}}_1$ such that $\widehat{w}_k\rightharpoonup \widehat{w}_*\in W^{1,2}_0(\Omega)$, 
    up to a subsequence.
    Moreover, by the lower semi-continuity of the norm, one has
    \[
    0\leq\|{\nabla  \widehat{w}_*}\|_{L^2(\Omega)}\leq \liminf_{k\to\infty} \|{\nabla \widehat{w}_k}\|_{L^2(\Omega)}=1.
    \] 
    Next, by the compact Sobolev embedding $W^{1,2}_0(\Omega)\hookrightarrow L^2(\Omega)$ applied to \eqref{final_estimate1}, we get
	\begin{equation}\label{vai0} 
		\lim_{k \rightarrow \infty} \frac{\mathscr{E}_{\alpha,\Omega} (u_*) - \mathscr{E}_{\alpha,\Omega} (u_k)}
		{\| \nabla w_k \|_{L^2(\Omega)}^2}= \frac{\alpha}{\Lambda_1(\alpha)}\left(1 
		- \Lambda_1(\alpha) \int_\Omega \widehat{w}_*^2(1+2\alpha u_*^2)
		e^{\alpha u_*^2} \ud x\right) = 0,
	\end{equation}
    which, in turn, proves the claim.

    \noindent{\bf Claim 2:} The convergence $\widehat{w}_k\rightarrow \widehat{w}_*$ is strong in $W^{1,2}_0(\Omega)$, with $\widehat{w}_*\in T_{u_*}\mathcal{S}_1$, $\|\nabla \widehat{w}_*\|_{L^2(\Omega)}=1$, and $\mathscr{Q}_{\alpha,\Omega}(\widehat{w}_*,\widehat{w}_*)=0$.
	
    \noindent Indeed, a routine computation, as in \eqref{first_order1}, yields
	\[
	\int_\Omega \langle \nabla  u_*,  \nabla  \widehat{w}_*\rangle \ud x=\lim_{k\rightarrow \infty}\int_\Omega  \langle \nabla  u_* ,\nabla \widehat{w}_k\rangle \ud x= -\lim_{k\rightarrow \infty}\frac{1}{2}\|{\nabla w_k}\|_{L^2(\Omega)} = 0,
	\]
	which implies that $\widehat{w}_*\in T_{u_*}\mathcal{S}_1$. 
    Recall the Lagrangian functional $\mathscr{F}_{\alpha,\Omega}$ defined in~\eqref{eq:Lagrangian}, whose Euler--Lagrange equation is~\eqref{eq:TM_BVP}. Its Fréchet derivative
    $\mathscr{F}'_{\alpha,\Omega}(u_*):W^{1,2}_0(\Omega)\to \mathbf{R}$ is given by
    \[
    \mathscr{F}_{\alpha,\Omega}'(u_*)(v)=-\int_{\Omega}  v\Delta u_*   \ud x +\Lambda_1(\alpha) \int_{\Omega} u_*ve^{\alpha u_*^2} \ud x.
    \]
	Furthermore, one can obtain the second derivative $\mathscr{F}_{\alpha,\Omega}'':T_{u_*}\mathcal{S}_1\times T_{u_*}\mathcal{S}_1\to \mathbf{R}$ as
	\[
	\mathscr{F}_{\alpha,\Omega}''(u_*)(w,v)= \int_{\Omega}  \langle \nabla  w, \nabla  v \rangle \ud x- \Lambda_1(\alpha) \int_{\Omega} (1 + 2\alpha u_*^2) e^{\alpha u_*^2} wv \ud x.
	\]
	Thus, $u_*\in {\rm Crit}(\mathscr{F}_{\alpha,\Omega})$ is a critical point and, moreover, a global minimizer for the $\mathscr{F}_{\alpha,\Omega}$ over the Hilbert manifold $\mathcal{S}_1\subset W^{1,2}_0(\Omega)$; this implies 
	\begin{equation}\label{positiv}
		\mathscr{F}_{\alpha,\Omega}''(u_*)(w,w)= \int_{\Omega}|\nabla w|^2\ud x- \Lambda_1(\alpha) \int_{\Omega} (1 + 2\alpha u_*^2) e^{\alpha u_*^2} w^2 \ud x\geq 0 \quad {\rm for} \quad  w\in T_{u_*} \mathcal{S}_1.
	\end{equation}
    In addition, from  \eqref{vai0} and Fatou's lemma, one has
    \[
    \begin{aligned}
    0\leq \mathscr{F}_{\alpha,\Omega}''(u_*)(\widehat{w}_*,\widehat{w}_*)&= \|{\nabla  \widehat{w}_*}\|^2_{L^2(\Omega)}- \Lambda_1(\alpha) \int_\Omega \widehat{w}_*^2(1+2\alpha u_*^2)
    e^{\alpha u_*^2} \ud x\\
    &\leq 1
    - \Lambda_1(\alpha) \int_\Omega \widehat{w}_*^2(1+2\alpha u_*^2)
    e^{\alpha u_*^2} \ud x=0,
    \end{aligned}
    \]
    which implies $\|{\nabla  \widehat{w}_*}\|^2_{L^2(\Omega)}=1$, concluding the proof of the claim.

    We are now in a position to derive the desired contradiction.
    By Claim~2, the limit $\widehat{w}_*\in T_{u_*}\mathcal{S}_1$
    satisfies $\|\nabla\widehat{w}_*\|_{L^2(\Omega)}=1$ (in particular $\widehat{w}_*\not\equiv0$) and
    $\mathscr{Q}_{\alpha,\Omega}(\widehat{w}_*,\widehat{w}_*)=0$.
    On the other hand, by Lemma~\ref{lem:Q_coercivity},
    for $\alpha\in(0,\alpha_0)$ one has
    $\mathscr{Q}_{\alpha,\Omega}(w,w)\ge c(\alpha)\|w\|_{L^2(\Omega)}^2>0$
    for every $w\in T_{u_*}\mathcal{S}_1\setminus\{0\}$,
    a contradiction.
    This contradiction to assumption~\eqref{contra0} shows that
    \({\rm def}_{\alpha,\Omega}(u)\ \gtrsim_\alpha\ {\rm dist}_{\alpha,\Omega}(u)^2\)
    holds for all $u\in\mathcal{S}_1$, completing the proof of Theorem~\ref{thm:subcritical_stability}.
    \end{proof}
    
\subsection{Generic nondegenerate case}\label{subsec:generic_nondeg}
We present the proof of Theorem~\ref{thm:subcritical_stability_nondeg}, which applies only under the nondegeneracy condition.

For nondegenerate critical points, the kernel of the Hessian is entirely tangential
to the manifold of maximizers. In this situation, a Lyapunov--Schmidt reduction
(see \cite{BE,MR4427104})
shows that the Trudinger--Moser functional decreases quadratically in the normal directions,
yielding a local coercivity estimate of second order. We make this precise below.

We recall that for a maximizer $u_*\in\mathcal{M}_\alpha(\Omega)$, 
the second variation of $\mathscr{E}_{\alpha,\Omega}$ restricted to the tangent 
space $T_{u_*}\mathcal{S}_1$ is denoted by $\mathscr{Q}_{\alpha,\Omega}(u_*)[w,w]$. 
If $u_*$ is nondegenerate, there exists $\mu>0$ such that
\[
\mathscr{Q}_{\alpha,\Omega}(u_*)[w,w]\ge \mu\|\nabla w\|_{L^2(\Omega)}^2
\quad \text{for all} \quad w\in T_{u_*}\mathcal{S}_1.
\]
This property provides the spectral gap needed for local quadratic coercivity.

\begin{proposition}[Local quadratic coercivity]\label{prop:local_coercivity}
Let $\Omega\subset\mathbf{R}^2$ be a smooth, bounded domain.
If all optimizers of the Trudinger--Moser constant are nondegenerate, then there exists $\alpha_0>0$ such that for any $\alpha\in(0,\alpha_0)$, one can find constants $\varepsilon=\varepsilon(\alpha,u_*)>0$ and $c=c(\alpha,\Omega,u_*)>0$ such that if $\|\nabla(u-u_*)\|_{L^2(\Omega)}\le \varepsilon$, uniformly on $\mathcal{M}_\alpha(\Omega)$, it holds
\begin{equation}\label{eq:local_stability}
    {\rm TM}(\alpha,\Omega)-\mathscr{E}_{\alpha,\Omega}(u)
\ge
c(\alpha,\Omega,u_*)\|\nabla(u-u_*)\|_{L^2(\Omega)}^2
\quad {\rm for \ all} \quad u\in\mathcal{S}_1.
\end{equation}
\end{proposition}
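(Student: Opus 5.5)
The plan is to combine the second-order expansion of Lemma~\ref{lm:taylor_expansion} with the nondegeneracy hypothesis, after handling the fact that $w=u-u_*$ is not exactly tangent to $\mathcal{S}_1$ at $u_*$.

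Fix $u_*\in\mathcal{M}_\alpha(\Omega)\cap\mathcal{S}_1$ (positive, as in Lemma~\ref{lm:eigenvalue_convergence}) and $u\in\mathcal{S}_1$, and set $w=u-u_*$. By \eqref{first_order1} we have $\int_\Omega\langle\nabla u_*,\nabla w\rangle\ud x=-\tfrac12\|\nabla w\|_{L^2(\Omega)}^2$. Decompose
\[
w=w^T+s\,u_*,\qquad w^T\in T_{u_*}\mathcal{S}_1 .
\]
Then $s=\int_\Omega\langle\nabla u_*,\nabla w\rangle\ud x=-\tfrac12\|\nabla w\|_{L^2(\Omega)}^2$, so $|s|=\mathcal{O}(\|\nabla w\|^2_{L^2(\Omega)})$. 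It follows that $\|\nabla w^T\|_{L^2(\Omega)}=\|\nabla w\|_{L^2(\Omega)}+\mathcal{O}(\|\nabla w\|^2_{L^2(\Omega)})$.

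Next, rewrite the right-hand side of \eqref{eq:taylor_expansion} as $\tfrac{\alpha}{\Lambda_1(\alpha)}\mathscr{Q}_{\alpha,\Omega}(w,w)+\mathcal{O}(\|\nabla w\|^3_{L^2(\Omega)})$. This uses the definition \eqref{eq:weighted-ip}, since $\tfrac{\alpha}{\Lambda_1}\|\nabla w\|^2-\alpha\mathcal{I}^*_{\alpha,\Omega}(w)=\tfrac{\alpha}{\Lambda_1}\mathscr{Q}_{\alpha,\Omega}(w,w)$. Expanding bilinearly gives
\[
\mathscr{Q}_{\alpha,\Omega}(w,w)=\mathscr{Q}_{\alpha,\Omega}(w^T,w^T)+2s\,\mathscr{Q}_{\alpha,\Omega}(w^T,u_*)+s^2\mathscr{Q}_{\alpha,\Omega}(u_*,u_*).
\]
The last two terms are $\mathcal{O}(\|\nabla w\|^3_{L^2(\Omega)})$, because $\mathscr{Q}_{\alpha,\Omega}$ is a bounded bilinear form on $W^{1,2}_0(\Omega)$. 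Boundedness holds since $u_*\in L^\infty$ by elliptic regularity, as in \eqref{eq:uinftybound}, so the weight is bounded. Nondegeneracy then yields
\[
{\rm TM}(\alpha,\Omega)-\mathscr{E}_{\alpha,\Omega}(u)\ge \frac{\alpha\mu}{\Lambda_1(\alpha)}\|\nabla w^T\|^2_{L^2(\Omega)}-C\|\nabla w\|^3_{L^2(\Omega)}\ge\frac{\alpha\mu}{2\Lambda_1(\alpha)}\|\nabla w\|^2_{L^2(\Omega)}
\]
whenever $\|\nabla w\|_{L^2(\Omega)}\le\varepsilon$ with $\varepsilon$ small relative to $\mu$ and $C$. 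This gives \eqref{eq:local_stability} with $c=\alpha\mu/(2\Lambda_1(\alpha))$. The smallness $\alpha<\alpha_0$ is only used to invoke the constants of Lemma~\ref{lm:taylor_expansion} (and the uniform $L^\infty$ bound $M$). A cleaner alternative is to use the boundedness of $u_*$ for any fixed subcritical $\alpha$.

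The main obstacle is the claimed uniformity over $\mathcal{M}_\alpha(\Omega)$. The constants $C$ in the cubic remainder depend on $\alpha$, $\Omega$ and $M$, and Corollary~\ref{co:compactness} plus \eqref{eq:uinftybound} make $M$ uniform, so $C$ is uniform. The coercivity constant $\mu(u_*)$ is the harder part. I would show that $u_*\mapsto\mu(u_*)$ is lower semicontinuous on the compact set $\mathcal{M}_\alpha(\Omega)\cap\mathcal{S}_1$, which is compact by Corollary~\ref{co:compactness}. To do so, argue by contradiction: take $u_j\to u_*$ in $W^{1,2}_0(\Omega)$ and unit tangent vectors $w_j\in T_{u_j}\mathcal{S}_1$ with $\mathscr{Q}_{u_j}(w_j,w_j)\to\le0$. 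After projecting onto $T_{u_*}\mathcal{S}_1$, use the strong convergence of the weights $(1+2\alpha u_j^2)e^{\alpha u_j^2}$ in $L^p$ together with compactness of $W^{1,2}_0\hookrightarrow L^q$ to pass to the limit. This contradicts nondegeneracy at $u_*$. Together, these give $\inf\mu>0$ and hence uniform $\varepsilon$ and $c$.
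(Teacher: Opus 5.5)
Your argument is correct, and it takes a different route from the paper's at the one step that matters. The paper does not split $w$. It bounds the quadratic part of Lemma~\ref{lm:taylor_expansion} on all of $W^{1,2}_0(\Omega)$ by the global Rayleigh quotient, $\mathcal{I}^*_{\alpha,\Omega}(w)\le \lambda_1(\alpha)^{-1}\|\nabla w\|_{L^2(\Omega)}^2$. This gives ${\rm def}_{\alpha,\Omega}(u)\ge \alpha\delta(\alpha)\|\nabla w\|^2_{L^2(\Omega)}-C\|\nabla w\|^3_{L^2(\Omega)}$ with $\delta(\alpha)=\Lambda_1(\alpha)^{-1}-\lambda_1(\alpha)^{-1}$. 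The paper then asserts that nondegeneracy yields $\alpha\delta(\alpha)\ge\mu$, and it disposes of uniformity in one line via compactness of $\mathcal{M}_\alpha(\Omega)$ and continuity of the Hessian.

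Since $\lambda_1(\alpha)<\Lambda_1(\alpha)$, that $\delta(\alpha)$ is in fact negative. This is consistent with $\mathscr{Q}_{\alpha,\Omega}(v_*,v_*)<0$ in Lemma~\ref{lem:Qalpha-kernel-extension}(iii). So a bound valid in every direction cannot give a positive constant, and the nondegeneracy hypothesis, which only concerns $T_{u_*}\mathcal{S}_1$, never actually enters. Your decomposition $w=w^T+s\,u_*$ with $s=-\tfrac12\|\nabla w\|_{L^2(\Omega)}^2$ supplies the missing mechanism. The sphere constraint makes $w$ tangent up to second order, so the negative direction is invisible at quadratic order and the cross terms $2s\,\mathscr{Q}_{\alpha,\Omega}(w^T,u_*)+s^2\mathscr{Q}_{\alpha,\Omega}(u_*,u_*)$ are cubic. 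Nondegeneracy is then applied exactly where it holds.

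Your lower-semicontinuity argument for $\mu(u_*)$ likewise gives an actual proof of the uniformity the paper only asserts. When you write it out, record two small points. First, $\Lambda_1=\bigl(\int_\Omega u_*^2e^{\alpha u_*^2}\ud x\bigr)^{-1}$ depends on the optimizer, but it converges along $u_j\to u_*$. Second, the weak limit $w$ of the $w_j$ is automatically in $T_{u_*}\mathcal{S}_1$, since $u_j\to u_*$ strongly. It is nonzero because $\Lambda_1(u_*)\int_\Omega W_*w^2\ud x\ge 1\ge\|\nabla w\|^2_{L^2(\Omega)}$, which gives $\mathscr{Q}_{u_*}(w,w)\le 0$ and contradicts nondegeneracy.

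Your route also shows that $\alpha<\alpha_0$ is inessential. Any subcritical $\alpha$ with a uniform $L^\infty$ bound on $\mathcal{M}_\alpha(\Omega)$ works, and such a bound follows from compactness plus elliptic regularity. That is the setting Theorem~\ref{thm:subcritical_stability_nondeg} actually needs, whereas for small $\alpha$ Lemma~\ref{lem:Q_coercivity} already gives nondegeneracy for free.
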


\begin{proof}
By Lemma~\ref{lm:eigenvalue_convergence}, we have 
\[
\delta(\alpha):=\frac{1}{\Lambda_1(\alpha)}-\frac{1}{\lambda_1(\alpha)}>0 \quad {\rm for} \quad 0<\alpha\ll 1.
\]
Applying Lemma~\ref{lm:taylor_expansion} with $w=u-u_*$ gives us
\[
\mathscr{E}_{\alpha,\Omega}(u_*)-\mathscr{E}_{\alpha,\Omega}(u)
\geq 
\alpha\delta(\alpha)\|\nabla w\|_{L^2(\Omega)}^2 - C\|\nabla w\|_{L^2(\Omega)}^3,
\]
for some $C=C(\alpha, \Omega)>0$.
By the nondegeneracy hypothesis, it follows that $\alpha\,\delta(\alpha)\ge \mu>0$. 
Therefore, by choosing $\varepsilon>0$ such that $2\varepsilon C\le \mu$, we find
\[
\mathscr{E}_{\alpha,\Omega}(u_*)-\mathscr{E}_{\alpha,\Omega}(u)\ge \tfrac{\mu}{2}\|\nabla w\|_{L^2(\Omega)}^2 \quad {\rm for \ all} \quad u\in\mathcal{S}_1,
\]
which is the desired coercivity estimate.
This proves the desired quadratic coercivity estimate with 
$c(\alpha,\Omega,u_*)=\tfrac{\mu}{2}$ and $\varepsilon=\varepsilon(\alpha,u_*)$.

Finally, since $\mathcal{M}_\alpha(\Omega)\subset\mathcal{S}_1$ is compact and 
the map $u_*\mapsto \mathscr{Q}''_{\alpha,\Omega}(u_*)$ is continuous,
the nondegeneracy constants $\mu(u_*)>0$ admit a uniform positive lower bound on 
$\mathcal{M}_\alpha(\Omega)$. 
Furthermore, from Lemma~\ref{lm:sphere_to_ball}, one can extend the estimate for optimizers on $\overline{\mathcal{B}}_1\subset W^{1,2}_0(\Omega)$.
\end{proof}

\begin{proof}[Proof of Theorem~\ref{thm:subcritical_stability_nondeg} (stability part)]
By Lemma~\ref{lm:compactness}, the set $\mathcal{M}_\alpha(\Omega)\cap\mathcal{S}_1$ 
is compact, and so one finds finitely many balls $\mathcal{B}(u_*^j,\varepsilon_j)\subset W^{1,2}_0(\Omega)$ with $j\in\{1,\dots,N\}$ such that $\mathcal{M}_\alpha(\Omega)\subset \bigcup_{j =1}^N \mathcal{B}(u_*^j,\varepsilon_j)$ and
\[
    {\rm TM}(\alpha,\Omega)-\mathscr{E}_{\alpha,\Omega}(u)
\geq
c(\alpha,\Omega,u_*^j)\|\nabla(u-u_*^j)\|_{L^2(\Omega)}^2
\quad {\rm for \ all} \quad u\in\mathcal{B}(u_*^j,\varepsilon_j),
\]
where 
\[
\mathcal{B}(u_*^j,\varepsilon_j)=\{u\in W^{1,2}_0(\Omega) : \|\nabla(u- u_*^j)\|_{L^2(\Omega)}<\varepsilon_j\}.
\]

Let $C^*(\alpha,\Omega)=\min_{1\le j\le N} c(\alpha,\Omega,u_*^j)>0$ and $\varepsilon_*(\alpha,u_*):=\min_{1\le j\le N}\varepsilon_j(\alpha,u_*)$. 
If $u\in\mathcal{S}_1$ satisfies
${\rm dist}_{\alpha,\Omega}(u)\le \varepsilon_*$, 
then there exists $u_*^j\in\mathcal{M}_\alpha(\Omega)$ with $\|\nabla(u-u_*^j)\|_{L^2(\Omega)}\le \varepsilon_j$, 
and Proposition~\ref{prop:local_coercivity} applies. 
Otherwise, if $\mathscr{E}_{\alpha,\Omega}(u)\le{\rm TM}(\alpha,\Omega)$, then ${\rm dist}_{\alpha,\Omega}(u)$ is bounded below by a fixed positive number, yielding a trivial lower bound.
In both cases, the estimate below follows 
\[
{\rm def}_{\alpha,\Omega}(u) \ge C^*(\alpha,\Omega){\rm dist}_{\alpha,\Omega}(u)^2.
\]
Thus, the stability part of the result is proved.
\end{proof}

In the remainder of this section we justify the genericity part 
of Theorem~\ref{thm:subcritical_stability_nondeg}, showing that
for a generic choice of parameters $(\alpha,\Omega)$, all optimizers in $\mathcal{M}_\alpha(\Omega)$
are nondegenerate. 
The proof follows a standard Sard--Smale transversality scheme \cite{MR185604} 
(see also \cite{MR2560131,MR2982783}), adapted
to the present setting by pulling the problem back to a fixed reference domain to
parameterize $\Omega$ smoothly. This type of argument has now 
appeared several times in the literature, but we include it here for the 
reader's convenience.

Fix a smooth, bounded reference domain $\Omega_0\subset\mathbf{R}^2$ of unit volume.
For integers $m\ge 3$ and $\beta\in(0,1)$ let
\[
\mathcal{P}:=(0,4\pi)\times \mathcal{U}_{m,\beta} \quad {\rm with} \quad
\mathcal{U}_{m,\beta}\subset \mathcal{C}^{m,\beta}(\overline{\Omega}_0;\mathbf{R}^2)
\]
be an open $\mathcal{C}^{m,\beta}$-neighborhood of the identity consisting of $\mathcal{C}^{m,\beta}$-diffeomorphisms
$\psi:\overline{\Omega}_0\to\overline{\psi(\Omega_0)}$ with $\det (D\psi)>0$ and
$|\psi(\Omega_0)|=1$. Each $\psi\in\mathcal{U}_{m,\beta}$ determines a unit-volume domain
$\Omega_\psi:=\psi(\Omega_0)$ with smooth boundary.

For any $\psi\in\mathcal{U}_{m,\beta}$ we define the pullback isomorphism
\(\Xi_\psi: W^{1,2}_0(\Omega_\psi) \rightarrow\ W^{1,2}_0(\Omega_0)\) by
\(\Xi_\psi(u):=u\circ\psi\).
We pull back the Dirichlet energy and the
Trudinger--Moser functional to $\Omega_0$ by setting $\mathcal{A}_\psi, \mathcal{E}_{\alpha,\psi}:W^{1,2}_0(\Omega_0)\to \mathbf{R}$ as
\[
\mathcal{A}_\psi(v):=\frac12\int_{\Omega_0}\!\!\langle (D\psi)^{-T}\nabla v, (D\psi)^{-T}\nabla v\rangle \det (D\psi)\ud x,
\quad {\rm and} \quad 
\mathcal{E}_{\alpha,\psi}(v):=(\mathscr{E}_{\alpha,\Omega_\psi}\circ \Xi_\psi^{-1})(v).
\]
Both maps depend $\mathcal{C}^{m-1}$-smoothly on $\psi$ (indeed $\mathcal{C}^{m-1,\beta}$),
and are $\mathcal{C}^\infty$ in $v$ (and in $\alpha$).

We work on the fixed constraint
\[
\mathcal{S}_{1,\psi}:=\Bigl\{v\in W^{1,2}_0(\Omega_0): \mathcal{A}_\psi(v)=1\Bigr\}
\quad {\rm and} \quad
\mathcal{B}:=W^{1,2}_0(\Omega_0).
\]

Next, notice that critical points of $\mathcal{E}_{\alpha,\psi}$ on $\mathcal{S}_{1,\psi}$ are characterized by the
Euler--Lagrange equation with Lagrange multiplier $\Lambda\in\mathbf{R}$, {\it i.e.,}
\[
D_v\mathcal{E}_{\alpha,\psi}(v)-\Lambda\,D_v\mathcal{A}_\psi(v)=0 \quad  {\rm and} \quad \mathcal{A}_\psi(v)=1.
\]
Let us define the augmented map $\mathfrak{F}:\ \mathcal{P}\times \bigl(\mathcal{B}\times\mathbf{R}\bigr)\ \longrightarrow\ 
\mathcal{B}'\times\mathbf{R}$ by
\[
\mathfrak{F}(\alpha,\psi;v,\Lambda):=
\Bigl(D_v\mathcal{E}_{\alpha,\psi}(v)-\Lambda D_v\mathcal{A}_\psi(v),\; \mathcal{A}_\psi(v)-1\Bigr),
\]
where $\mathcal{B}'=W^{-1,2}(\Omega_0)$.
Then $(\alpha,\psi;v,\Lambda)$ is a zero of $\mathfrak{F}$ if and only if $v\in\mathcal{S}_{1,\psi}$ is a
critical point of $\mathcal{E}_{\alpha,\psi}$ with multiplier $\Lambda\in\mathbf{R}$.

\begin{lemma}[Regularity and Fredholm property]\label{lm:fredholm}
For $m\ge 3$ the map $\mathfrak{F}$ is $\mathcal{C}^{m-1}$ in $(\alpha,\psi;v,\Lambda)$.
For each $(\alpha,\psi;v,\Lambda)$ with $\mathfrak{F}(\alpha,\psi;v,\Lambda)=0$, the linearization 
\(D_{(v,\Lambda)}\mathfrak{F}(\alpha,\psi;v,\Lambda):\ \mathcal{B}\times\mathbf{R}\ \longrightarrow\ \mathcal{B}'\times\mathbf{R}\)
is a Fredholm operator of index $0$.
\end{lemma}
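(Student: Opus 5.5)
We will treat the two assertions of Lemma~\ref{lm:fredholm} separately: first the smoothness of $\mathfrak{F}$, then the structure of its partial linearization in $(v,\Lambda)$. The linearization will be split as an isomorphism plus a compact perturbation, and the bordered (augmented) structure will be handled by a finite-rank argument.

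\textbf{Regularity.} Write the pulled-back Dirichlet form as
\[
D_v\mathcal{A}_\psi(v)[h]=\int_{\Omega_0}\langle G_\psi\nabla v,\nabla h\rangle\ud x,
\qquad G_\psi:=\det(D\psi)\,(D\psi)^{-1}(D\psi)^{-T}.
\]
The coefficient matrix $G_\psi$ is built algebraically from $D\psi\in\mathcal{C}^{m-1,\beta}$, with $\det(D\psi)>0$. Hence $\psi\mapsto G_\psi\in\mathcal{C}^{m-1,\beta}(\overline\Omega_0)$ is real-analytic on $\mathcal{U}_{m,\beta}$. The term $D_v\mathcal{A}_\psi$ is linear in $v$, so it is smooth in $(\psi,v)$.

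Similarly, $\mathcal{E}_{\alpha,\psi}(v)=\int_{\Omega_0}e^{\alpha v^2}\det(D\psi)\ud x$ and
\[
D_v\mathcal{E}_{\alpha,\psi}(v)[h]=2\alpha\int_{\Omega_0}e^{\alpha v^2}v\,h\,\det(D\psi)\ud x.
\]
The main analytic point is that the Nemytskii map $v\mapsto e^{\alpha v^2}v$ is $\mathcal{C}^\infty$ from $W^{1,2}_0(\Omega_0)$ into $L^p(\Omega_0)$ for every $p<\infty$, locally uniformly. We would verify this from the Trudinger--Moser inequality \eqref{moser_tru_ineq} applied to $(v+th)$, as in the power-series bounds \eqref{higher_order1}--\eqref{higher_order2}. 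Indeed, for $\|\nabla v\|$ bounded, $\int e^{q\alpha v^2}<\infty$ for every $q$ (Trudinger's subcritical integrability of any fixed $W^{1,2}_0$ function), and this is locally uniform in $v$. Composing with the embedding $L^p\hookrightarrow\mathcal{B}'$, and with the smooth dependence on $\det D\psi$ and on $\alpha$, gives joint $\mathcal{C}^{m-1}$ regularity. The loss of one derivative comes only from $D\psi$.

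\textbf{Fredholm property.} At a zero of $\mathfrak{F}$, the linearization acts by
\[
(h,s)\mapsto\Big(D_v^2\mathcal{E}_{\alpha,\psi}(v)h-\Lambda D_v^2\mathcal{A}_\psi h-sD_v\mathcal{A}_\psi(v),\;D_v\mathcal{A}_\psi(v)[h]\Big).
\]
We split this operator into a principal part and a remainder.

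1. The principal part is $L_0(h,s):=(-\Lambda D^2_v\mathcal{A}_\psi h,\,s)$. Here $D^2_v\mathcal{A}_\psi h=-\mathrm{div}(G_\psi\nabla h)$, which is uniformly elliptic. By Lax--Milgram it is an isomorphism $\mathcal{B}\to\mathcal{B}'$.

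2. We need $\Lambda\neq0$ for $L_0$ to be an isomorphism. Testing the Euler--Lagrange equation with $v$ gives $2\alpha\int e^{\alpha v^2}v^2\det D\psi=2\Lambda\mathcal{A}_\psi(v)=2\Lambda$. Since $v\neq0$ on the constraint, this forces $\Lambda>0$.

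3. The remainder consists of three pieces:
\begin{itemize}
\item the term $h\mapsto D_v^2\mathcal{E}_{\alpha,\psi}(v)h$, which is multiplication of $h$ by the weight $2\alpha(1+2\alpha v^2)e^{\alpha v^2}\det D\psi\in L^p$;
\item the rank-one map $s\mapsto -sD_v\mathcal{A}_\psi(v)$;
\item the rank-one map $(h,s)\mapsto D_v\mathcal{A}_\psi(v)[h]-s$.
\end{itemize}
The first piece factors through the compact embedding $W^{1,2}_0\hookrightarrow L^{r}$ (for $r$ large, via Hölder with the $L^p$ weight), so it is compact into $\mathcal{B}'$. The other two are finite rank.

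Therefore the linearization is an isomorphism plus a compact operator, hence Fredholm of index $0$.

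We expect the main obstacle to be the uniform smoothness of the exponential Nemytskii operator on all of $W^{1,2}_0$, including near the critical growth. We would handle it by noting that each fixed $v\in W^{1,2}_0$ has $e^{\alpha v^2}\in L^q$ for all $q$. Uniformity on small balls around $v$ would follow by writing $v=v_1+v_2$ with $v_1$ bounded and $\|\nabla v_2\|$ small, and then applying \eqref{moser_tru_ineq} to $v_2$.
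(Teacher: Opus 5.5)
Your proposal is correct and takes essentially the paper's route: the principal part $-\Lambda D_v^2\mathcal{A}_\psi$ is an isomorphism $\mathcal{B}\to\mathcal{B}'$, the exponential weight term is a compact perturbation, and the rank-one constraint bordering leaves the index unchanged, so the linearization is Fredholm of index $0$. Your version fills in points the paper's sketch leaves implicit: you check that $\Lambda=\alpha\int_{\Omega_0}e^{\alpha v^2}v^2\det(D\psi)\ud x>0$ at every zero (without this the Hessian would not be elliptic), and you justify smoothness of the Nemytskii map by local-uniform Trudinger--Moser bounds.
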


\begin{proof}
The claimed regularity follows from the $\mathcal{C}^{m-1}$-dependence of $(D\psi)^{\pm1}$ and $\det D\psi$,
and standard composition/chain rules in $\mathcal{C}^{m-1,\beta}$ and Sobolev spaces.
At a zero, the linearization is
\[
D_{(v,\Lambda)}\mathfrak{F}(\alpha,\psi;v,\Lambda)[h,\ell]
=
\Bigl(\mathscr{L}_{\alpha,\psi,v}(h)-\ell\,D_v\mathcal{A}_\psi(v),\ D_v\mathcal{A}_\psi(v)[h]\Bigr),
\]
where $\mathscr{L}_{\alpha,\psi,v}$ is the second variation (Hessian) of the Lagrangian
$(\mathcal{E}_{\alpha,\psi}-\Lambda\mathcal{A}_\psi)[v]$, acting as a symmetric elliptic operator
$\mathcal{B}\to\mathcal{B}'$. The boundary condition is Dirichlet, so $\mathscr{L}_{\alpha,\psi,v}$
is Fredholm of index $0$; the rank-one constraint row/column keeps the total index equal to $0$.
\end{proof}

We say that a critical point $v\in\mathcal{S}_{1,\psi}$ is \emph{nondegenerate} if the operator
$D_{(v,\Lambda)}\mathfrak{F}(\alpha,\psi;v,\Lambda)$ is bijective. Indeed, its kernel consists of pairs
$(h,\ell)$ such that
\[
\mathscr{L}_{\alpha,\psi,v}(h)-\ell\,D_v\mathcal{A}_\psi(v)=0
\quad {\rm and} \quad
D_v\mathcal{A}_\psi(v)[h]=0,
\]
which, after eliminating $\ell$ along the constraint tangent $T_v\mathcal{S}_{1,\psi}=
\ker D_v\mathcal{A}_\psi(v)$, reduces to $\mathscr{L}_{\alpha,\psi,v}(h)=0$ with $h\in T_v\mathcal{S}_{1,\psi}$.
Thus, bijectivity is equivalent to trivial kernel on $T_v\mathcal{S}_{1,\psi}$, {\it i.e.,} nondegeneracy.

Let us consider the projection $\pi:\mathcal{P}\times(\mathcal{B}\times\mathbf{R})\to\mathcal{P}$ and the
zero set $\mathcal{Z}:=\mathfrak{F}^{-1}(0)$. By Lemma~\ref{lm:fredholm} and the implicit function
theorem in Banach manifolds (cf. \cite[Chapter 4]{Zeidler1985}), it follows that $\mathcal{Z}$ is a $\mathcal{C}^{m-1}$-Banach submanifold and the restriction
\[
\Pi:=\pi|_{\mathcal{Z}}:\ \mathcal{Z}\ \longrightarrow\ \mathcal{P}
\]
is a $\mathcal{C}^{m-1}$ Fredholm map of index $0$. A parameter $(\alpha,\psi)\in\mathcal{P}$ is a \emph{regular
value} of $\Pi$ iff for every $(v,\Lambda)$ with $\mathfrak{F}(\alpha,\psi;v,\Lambda)=0$ the partial
derivative $D_{(v,\Lambda)}\mathfrak{F}(\alpha,\psi;v,\Lambda)$ is onto (equivalently, bijective by index
$0$). As observed above, this is precisely the nondegeneracy of all critical points of
$\mathcal{E}_{\alpha,\psi}$ on $\mathcal{S}_{1,\psi}$.

\begin{proposition}[Residual set of parameters]\label{prop:residual}
The set
\[
\mathcal{G}\ :=\ \Bigl\{(\alpha,\psi)\in\mathcal{P}:\ \text{all critical points of $\mathcal{E}_{\alpha,\psi}$
on $\mathcal{S}_{1,\psi}$ are nondegenerate}\Bigr\}
\]
is residual (countable intersection of open dense sets) in $\mathcal{P}$. In particular, it is dense.
\end{proposition}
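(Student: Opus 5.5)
The plan is the Sard--Smale transversality argument applied to the Fredholm map $\Pi=\pi|_{\mathcal{Z}}:\mathcal{Z}\to\mathcal{P}$ built above. We already know from Lemma~\ref{lm:fredholm} that $\Pi$ is $\mathcal{C}^{m-1}$ and Fredholm of index $0$. The Sard--Smale theorem \cite{MR185604} requires $\mathcal{C}^q$ regularity with $q>\max\{0,\mathrm{index}\}=0$, which holds for $m\ge3$. It then asserts that the regular values of a $\mathcal{C}^q$ Fredholm map between separable Banach manifolds form a residual set. By the discussion preceding the proposition, $(\alpha,\psi)$ is a regular value of $\Pi$ exactly when every critical point of $\mathcal{E}_{\alpha,\psi}$ on $\mathcal{S}_{1,\psi}$ is nondegenerate. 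Hence $\mathcal{G}$ coincides with the set of regular values, and the statement follows once the hypotheses of Sard--Smale are verified.

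The first step is to check that $\mathcal{Z}$ is a Banach manifold, which is not automatic. The implicit function theorem requires the full derivative $D\mathfrak{F}$ in all variables $(\alpha,\psi;v,\Lambda)$ to be surjective at every zero. Since $D_{(v,\Lambda)}\mathfrak{F}$ is Fredholm, its range has finite codimension. It therefore suffices to show that the parameter derivatives $D_{(\alpha,\psi)}\mathfrak{F}$ span a complement of that range. I would argue by duality. Suppose $(h,\ell)$ annihilates the range of $D\mathfrak{F}$. Annihilating the range of $D_{(v,\Lambda)}\mathfrak{F}$ forces $\ell=0$ and $h\in T_v\mathcal{S}_{1,\psi}$ with $\mathscr{L}_{\alpha,\psi,v}h=0$, by the symmetry of the Hessian. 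Pairing with the variations in $\psi$ then gives an integral identity. Taking $\psi_t=\mathrm{id}+tV$ for vector fields $V$, corrected to preserve volume, a Hadamard-type domain variation formula reduces this identity to $\int_{\partial\Omega_\psi}\partial_\nu v\,\partial_\nu h\,(V\cdot\nu)\ud\sigma=0$ for all admissible $V$. This yields $\partial_\nu v\,\partial_\nu h=c$ on $\partial\Omega_\psi$, where the constant $c$ comes from the volume constraint.

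Next I would show that this forces $h=0$. For a positive critical point, the Hopf lemma gives $\partial_\nu v<0$ on the boundary. If $c=0$, then $\partial_\nu h=0$ together with $h=0$ on $\partial\Omega$, and unique continuation for the linear equation $-\Delta h=\Lambda(1+2\alpha v^2)e^{\alpha v^2}h$ gives $h\equiv0$. If $c\neq0$, I would also use the $\alpha$-derivative and a Pohozaev/Rellich identity, obtained by testing with $x\cdot\nabla v$ and $x\cdot\nabla h$, to reach a contradiction. This is the main obstacle: both the Hadamard computation and the exclusion of the case $c\ne0$ are delicate, and sign-changing critical points lack the Hopf argument. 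If that fails, I would restrict attention to the relevant component of maximizers, which are positive, or enlarge the parameter space by a smooth potential perturbation whose derivative spans interior directions directly.

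Finally, I would pass from Sard--Smale to the statement. Separability of $\mathcal{P}$ and $\mathcal{Z}$ holds since $\mathcal{C}^{m,\beta}$ can be replaced by the closure of smooth maps (little Hölder space), a technicality I would note. The set of regular values is then residual, and Baire's theorem gives density. For the open-dense claim of Theorem~\ref{thm:subcritical_stability_nondeg} restricted to maximizers, I would additionally use the compactness of $\mathcal{M}_\alpha(\Omega)$ from Lemma~\ref{lm:compactness} and the continuity of the Hessian. Together these show that nondegeneracy of all maximizers is an open condition, and combining openness with density gives the theorem.
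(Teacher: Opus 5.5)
Your route is the paper's own. The paper's proof of Proposition~\ref{prop:residual} consists of three steps: invoking Sard--Smale \cite{MR185604} for $\Pi$, checking $m-1>0$, and identifying regular values with nondegeneracy. The paper never justifies that $\mathcal{Z}$ is a Banach manifold. It cites Lemma~\ref{lm:fredholm} and the implicit function theorem, but that lemma concerns only $D_{(v,\Lambda)}\mathfrak{F}$, which fails to be onto exactly at degenerate critical points. You are right that surjectivity of the full differential $D\mathfrak{F}$ must be proved. You are also right about separability: Sard--Smale is stated for separable manifolds, and $\mathcal{C}^{m,\beta}$ is not separable. So your plan goes beyond the paper. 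However, your transversality step contains one false claim and leaves open a case that is in fact easy.

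\textbf{The false claim.} Annihilating the range of $D_{(v,\Lambda)}\mathfrak{F}$ does not force $\ell=0$. It only gives $h\in T_v\mathcal{S}_{1,\psi}$ and $\mathscr{L}_{\alpha,\psi,v}h=-\ell\,D_v\mathcal{A}_\psi(v)$.
\begin{itemize}
\item Work on $\Omega=\Omega_\psi$ with $u=v\circ\psi^{-1}$, and transport $h$ the same way. Testing with $v$ and using symmetry of the Hessian gives $\ell\int_\Omega|\nabla u|^2\ud x=-4\alpha^2\int_\Omega u^3e^{\alpha u^2}h\ud x$. Nothing forces the right side to vanish.
\item If $\ell\neq0$, the $\psi$-pairing picks up $\ell\,D_\psi\mathcal{A}_\psi(v)$. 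Moreover $h$ then solves the inhomogeneous equation $-\Delta h-f'(u)h=\frac{\ell}{\Lambda}f(u)$, where $f(s)=\frac{2\alpha}{\Lambda}e^{\alpha s^2}s$ so that $-\Delta u=f(u)$. Your boundary identity and the unique-continuation step then do not apply as written.
\item The missing input is the $\alpha$-direction. Pairing with $\partial_\alpha\mathfrak{F}$ gives $\int_\Omega(1+\alpha u^2)e^{\alpha u^2}uh\ud x=0$. Tangency, via the Euler--Lagrange equation, gives $\int_\Omega e^{\alpha u^2}uh\ud x=0$. Subtracting, $\int_\Omega u^3e^{\alpha u^2}h\ud x=0$, so $\ell=0$.
\item After that your Hadamard computation is correct. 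With $V=\delta\psi\circ\psi^{-1}$ the pairing equals $\Lambda\int_{\partial\Omega}(V\cdot\nu)\,\partial_\nu u\,\partial_\nu h\ud\sigma$. Volume-preserving $V$ then give $\partial_\nu u\,\partial_\nu h\equiv c$ on $\partial\Omega$.
\end{itemize}

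\textbf{The case $c\neq0$.} It needs no further parameter direction, only your Rellich--Pohozaev idea. The function $z=x\cdot\nabla u$ satisfies $-\Delta z-f'(u)z=2f(u)$. Since $-\Delta h=f'(u)h$ and $h=0$ on $\partial\Omega$, Green's formula gives
\[
2c\,|\Omega|=\int_{\partial\Omega}(x\cdot\nu)\,\partial_\nu u\,\partial_\nu h\ud\sigma=2\int_\Omega f(u)h\ud x=2\int_\Omega\nabla u\cdot\nabla h\ud x=0 .
\]
This is the dilation invariance of the planar problem, which is also why the unit-volume normalization loses nothing.

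\textbf{The case $c=0$.} Hopf is not needed here either. If $\partial_\nu u\equiv0$ on $\partial\Omega$, the zero extension of $u$ would contradict unique continuation. So $\partial_\nu u\neq0$ on a nonempty relatively open subset of $\partial\Omega$. There $h$ has zero Cauchy data, and unique continuation gives $h\equiv0$. This covers sign-changing critical points as well.

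Your fallbacks are therefore unnecessary. They would not prove the proposition as stated in any case, since it concerns all critical points and the given parameter space $\mathcal{P}$. Finally, the range of $D\mathfrak{F}$ contains the closed, finite-codimensional range of $D_{(v,\Lambda)}\mathfrak{F}$, so it is closed. A trivial annihilator then gives surjectivity.
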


\begin{proof}
By Sard--Smale theorem \cite[Theorem~1.3]{MR185604}, the set of regular values of the $\mathcal{C}^{m-1}$ Fredholm map $\Pi$ is residual in
$\mathcal{P}$ provided $m-1>\max\{0,\operatorname{ind}(\Pi)\}=0$. Since $m\ge 3$, this holds.
If $(\alpha,\psi)$ is a regular value, then for each zero $(v,\Lambda)$ the linearization in $(v,\Lambda)$
is onto, hence (by index $0$) invertible, {\it i.e.,} every critical point is nondegenerate.
\end{proof}

We now discuss the openness of the transversality map and the passage to geometric parameters.
In fact, residual sets are dense; openness follows from continuous dependence of the Hessian on parameters.

\begin{lemma}[Openness]\label{lem:open}
The set $\mathcal{G}\subset \mathcal{P}$ is relatively open.
\end{lemma}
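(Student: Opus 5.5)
We argue by contradiction, combining compactness of the critical set with the uniform invertibility of nondegenerate linearizations. Suppose $(\alpha_0,\psi_0)\in\mathcal{G}$ but there are parameters $(\alpha_k,\psi_k)\to(\alpha_0,\psi_0)$ in $\mathcal{P}$ with $(\alpha_k,\psi_k)\notin\mathcal{G}$. Then for each $k$ we pick a degenerate critical point $v_k\in\mathcal{S}_{1,\psi_k}$ of $\mathcal{E}_{\alpha_k,\psi_k}$, with multiplier $\Lambda_k$. We also pick $(h_k,\ell_k)\in\ker D_{(v,\Lambda)}\mathfrak{F}(\alpha_k,\psi_k;v_k,\Lambda_k)$, normalized so that $\|\nabla h_k\|_{L^2(\Omega_0)}+|\ell_k|=1$.

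\textbf{Step 1: compactness of critical points.} Since $\mathcal{A}_{\psi_k}(v_k)=1$ and $\psi_k\to\psi_0$ in $\mathcal{C}^{m,\beta}$, the sequence $\{v_k\}$ is bounded in $W^{1,2}_0(\Omega_0)$. Testing the Euler--Lagrange equation against $v_k$ expresses $\Lambda_k$ through $\int e^{\alpha_k u^2}u^2$, so $\Lambda_k$ is bounded above and below.

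Because $\alpha_0<4\pi$, we can choose $q>1$ with $q\alpha_k\le 4\pi$ for all large $k$, after accounting for the metric distortion $\|D\psi_k^{\pm1}\|_\infty\to1$. This gives a uniform $L^q$ bound on $e^{\alpha_k v_k^2}$. Elliptic regularity for the divergence-form operator with $\mathcal{C}^{m-1,\beta}$ coefficients, bootstrapped as in the proof of Lemma~\ref{lm:eigenvalue_convergence}, then yields uniform $W^{2,p}$ and hence $L^\infty$ bounds.

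Passing to a subsequence, $v_k\to v_0$ strongly in $W^{1,2}_0$ and $\Lambda_k\to\Lambda_0$. The limit satisfies $\mathfrak{F}(\alpha_0,\psi_0;v_0,\Lambda_0)=0$: strong convergence of the nonlinear term follows from Lemma~\ref{lm:vitali_convergence} and the $L^\infty$ bound, and the constraint $\mathcal{A}_{\psi_0}(v_0)=1$ passes to the limit. Thus $v_0$ is a critical point of $\mathcal{E}_{\alpha_0,\psi_0}$ on $\mathcal{S}_{1,\psi_0}$, hence nondegenerate because $(\alpha_0,\psi_0)\in\mathcal{G}$.

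\textbf{Step 2: passing the kernel elements to the limit.} The equation $\mathscr{L}_{\alpha_k,\psi_k,v_k}(h_k)=\ell_k D_v\mathcal{A}_{\psi_k}(v_k)$ has the form
\[
-\mathrm{div}(A_k\nabla h_k)=\text{(bounded weight)}\cdot h_k+\ell_k(\cdots).
\]
Extract $h_k\rightharpoonup h_0$ weakly in $W^{1,2}_0$ and $\ell_k\to\ell_0$. The weights $(1+2\alpha_k v_k^2)e^{\alpha_k v_k^2}$ converge uniformly by the $L^\infty$ bound, and $h_k\to h_0$ in $L^2$. Subtracting the equations for $h_k$ and $h_0$ and testing with $h_k-h_0$ then upgrades the convergence to strong convergence in $W^{1,2}_0$. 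Hence $\|\nabla h_0\|_{L^2}+|\ell_0|=1$, and $(h_0,\ell_0)$ is a nonzero element of $\ker D_{(v,\Lambda)}\mathfrak{F}(\alpha_0,\psi_0;v_0,\Lambda_0)$. This contradicts the nondegeneracy of $v_0$, so $\mathcal{G}$ is relatively open.

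An equivalent formulation: $D_{(v,\Lambda)}\mathfrak{F}$ depends continuously on all variables in operator norm, and invertible operators form an open set. Combined with Step 1, this shows that critical points for nearby parameters lie in a small neighbourhood of the finitely many critical points at $(\alpha_0,\psi_0)$, where the linearization stays invertible. Finiteness holds because nondegenerate critical points are isolated, by the implicit function theorem, and the critical set is compact.

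\textbf{Main obstacle.} The key difficulty is the uniform compactness and regularity in Step 1, since the critical set may contain non-maximizing critical points. The essential input is $\alpha_0<4\pi$, which keeps us strictly subcritical uniformly near the limit parameters and excludes concentration without invoking Flucher's argument. Care is also needed because $\psi_k$ alters the effective Moser constant only by a factor tending to $1$, so the choice of $q$ must absorb this distortion.
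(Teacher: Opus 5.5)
\textbf{There is a genuine gap in Step 1.} The set $\mathcal{G}$ is defined through \emph{all} critical points of $\mathcal{E}_{\alpha,\psi}$ on $\mathcal{S}_{1,\psi}$, not only maximizers, and that set is neither finite nor compact.

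Testing the equation against $v_k$ gives $\Lambda_k=\alpha_k\int_{\Omega_0}e^{\alpha_k v_k^2}v_k^2\det(D\psi_k)\ud x$. This is bounded below only if $\|v_k\|_{L^2(\Omega_0)}$ is, and nothing forces that. It already fails at fixed $(\alpha,\psi)$:
\begin{itemize}
\item $\mathcal{E}_{\alpha,\psi}$ is even and weakly continuous on the constraint, and it satisfies Palais--Smale on $\mathcal{S}_{1,\psi}$ at every level above $\mathcal{E}_{\alpha,\psi}(0)=|\Omega_\psi|$.
\item Lusternik--Schnirelmann (genus) theory therefore produces infinitely many pairs $\pm v_j$ of critical points whose critical values decrease to $|\Omega_\psi|$.
\item Hence $v_j\rightharpoonup 0$, $\|v_j\|_{L^2}\to 0$ and $\Lambda_j\to 0$; these critical points behave like normalized high Dirichlet modes.
\end{itemize}

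Several of your steps depend on this lower bound. The elliptic bootstrap divides by $\Lambda_k$, so it gives no uniform $W^{2,p}$ or $L^\infty$ bound. The ``bounded weight'' in Step 2 is really $2\alpha_k\Lambda_k^{-1}(1+2\alpha_k v_k^2)e^{\alpha_k v_k^2}\det(D\psi_k)$, which is unbounded as well. A sequence of degenerate critical points at $(\alpha_k,\psi_k)\to(\alpha_0,\psi_0)$ can escape along this noncompact end, converging weakly to $0\notin\mathcal{S}_{1,\psi_0}$, so no contradiction with $(\alpha_0,\psi_0)\in\mathcal{G}$ arises. For the same reason, the finiteness claim in your ``equivalent formulation'' is false.

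Nondegeneracy of the infinitely many critical points at $(\alpha_0,\psi_0)$ gives no uniform bound on the inverses of $D_{(v,\Lambda)}\mathfrak{F}$ along this family. This is the same obstruction that leads Uhlenbeck-type genericity results to assert residuality rather than openness. So openness of $\mathcal{G}$ as defined is not something a compactness argument can deliver. The paper's one-line proof (operator-norm continuity of $D_{(v,\Lambda)}\mathfrak{F}$ plus openness of invertibility) has the same blind spot. It also ignores that critical points move with the parameters, a point you correctly isolate.

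\textbf{What your argument does prove.} Essentially verbatim, it shows that
$\mathcal{G}_{\max}:=\{(\alpha,\psi)\in\mathcal{P}:\ \text{every maximizer of }\mathcal{E}_{\alpha,\psi}\text{ on }\mathcal{S}_{1,\psi}\text{ is nondegenerate}\}$
is open, and this is all Theorem~\ref{thm:subcritical_stability_nondeg} needs. Take maximizers $v_k$ at $(\alpha_k,\psi_k)\to(\alpha_0,\psi_0)$ with $\alpha_0<4\pi$. The supremum ${\rm TM}$ is continuous in $(\alpha,\psi)$: the upper bound comes from Vitali as in Lemma~\ref{lm:compactness}, and the lower bound from rescaling a fixed maximizer onto $\mathcal{S}_{1,\psi_k}$. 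This continuity shows that the weak limit of $v_k$ is a maximizer on $\mathcal{S}_{1,\psi_0}$ and that the convergence is strong. In particular $\Lambda_k\to\Lambda_0>0$, after which your elliptic bounds and Step 2 go through. Density of $\mathcal{G}_{\max}$ then follows from $\mathcal{G}\subset\mathcal{G}_{\max}$ together with Proposition~\ref{prop:residual}.
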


\begin{proof} 
We note that nondegeneracy is equivalent to invertibility of the bounded linear operator
$D_{(v,\Lambda)}\mathfrak{F}(\alpha,\psi;v,\Lambda)$, which depends continuously on $(\alpha,\psi)$ in the operator norm.
Invertibility is an open property by the bounded inverse theorem, hence $\mathcal{G}$ is open.
\end{proof}

At last, we can conclude the proof of our second main result.
\begin{proof}[Proof of Theorem~\ref{thm:subcritical_stability_nondeg} (genericity part)]
We transfer from the diffeomorphism parameter $\psi$ to the geometric domain parameter $\Omega$.
By construction, $\psi\mapsto \Omega_\psi$ is open in the $\mathcal{C}^{m,\beta}$ topology, so the image of the
open dense set $\{(\alpha,\psi): (\alpha,\psi)\in\mathcal{G}\}$ is open and dense in the set of
$(\alpha,\Omega)$ with $\Omega=\Omega_\psi$ of class $\mathcal{C}^{m,\beta}$.

From this, we conclude that for any fixed $m\ge 3$ and among pairs $(\alpha,\Omega)$ 
with $\alpha\in(0,4\pi)$ and $\Omega$ of class $\mathcal{C}^{m,\beta}$ and unit volume, 
the subset for which all maximizers in $\mathcal{M}_\alpha(\Omega)$ are
nondegenerate is open and dense.
\end{proof}

\section{Critical regime on a disk}\label{sec:critical_regime}
In this section, we address the Trudinger--Moser inequality at the critical exponent
$\alpha=4\pi$ on the unit disk $\mathbf{D}\subset\mathbf{R}^2$.
Unlike the general subcritical case discussed in \S~\ref{subsec:generic_nondeg},
the rotational symmetry of $\mathbf{D}$ allows for a complete spectral decomposition
of the linearized operator into angular Fourier modes.
This structure permits a direct and quantitative proof of stability that bypasses
the variational contradiction strategy used earlier.
The key observation is that the first nonradial mode (corresponding to angular frequency
$k=1$) lies strictly above the radial constrained eigenvalue, providing a uniform
\emph{spectral gap} between the radial and nonradial parts of the tangent space.
This gap ensures that the second variation of the functional is strictly positive in
all nonradial directions, leading to a global quadratic coercivity estimate on
$\mathcal{S}_1$ even in the critical regime.

We recall that the Trudinger--Moser inequality in the open unit disk $\mathbf{D}\subset \mathbf{R}^2$ can be stated as
\[\sup_{\substack{\bar{u} \in W_0^{1,2}(\mathbf{D})\\ \|{\nabla \bar{u}}\|_{L^2(\mathbf{D})}=1}} {\mathscr{E}_{\alpha,\mathbf{D}}(\bar{u})}= {\rm TM}(\alpha,\mathbf{D}) \quad {\rm for} \quad \alpha\in (0,4\pi].
\]
In what follows, we write the constrained second variation as
\begin{equation}\label{eq:Qalpha-disk}
\mathscr{Q}_\alpha[w]=\int_{\mathbf{D}}|\nabla w|^2\ud x-\Lambda_1^\circ(\alpha)\int_{\mathbf{D}}(1+2\alpha u_*^2)e^{\alpha u_*^2}w^2\ud x.
\end{equation}

\subsection{Second-eigenvalue spectral gap}\label{subsec:second_eigenvalue}
Before proving the main stability theorem in the critical case, we establish a key
spectral property of the linearized operator on the disk.
It asserts that the first nonradial eigenvalue lies strictly above the constrained radial one,
providing the uniform mode gap that will yield coercivity in all nonradial directions.

Let $u_*\in\mathcal{M}_\alpha(\mathbf{D})$ be the (positive) radial optimizer, and set
\[
W_\alpha(u_*(r)):=(1+2\alpha u_*^2(r))e^{\alpha u_*^2(r)} \quad \text{for} \quad r\in(0,1).
\]
Let us decompose $w\in W^{1,2}_0(\mathbf{D})$ in Fourier modes, that is,
\[
w(r,\theta)=\sum_{k=0}^\infty\phi_k(r)\big(A_k\cos k\theta+B_k\sin k\theta\big).
\]
For each $k\in\mathbf{N}$, the radial factor $\phi_k\in \mathcal{C}^2((0,1))$ solves 
the Sturm--Liouville problem
\begin{equation}\label{eq:SLk}\tag{$SL_{\alpha, k}$}
\begin{cases}
\mathbb{L}^*_{\alpha,k} (\phi)=\lambda rW_\alpha(u_*)\phi \quad \text{in} \quad (0,1),\\
\phi(1)=\phi(0)=0,
\end{cases}
\end{equation}
where 
\[
\mathbb{L}^*_{\alpha,k} (\phi):=-(r\phi')'+V_k(r)\phi = -(r\phi')' + \frac{k^2}{r^2} \phi
\]
is the restriction of $-\Delta$ to the $k$th Fourier mode. 
We also denote by $\lambda_1^{(k)}(\alpha)$ its first eigenvalue, which, from Lemma~\ref{lem:mode_gap_disk}, satisfies
\[
\lambda_1^{(0)}(\alpha)<\Lambda_1^\circ(\alpha) <\lambda_{1}^{(1)}(\alpha)\quad\text{for}\quad \alpha\in(0,4\pi].
\]

Observe that \eqref{eq:SLk} is nothing more than the restriction of the eigenvalue problem \eqref{eq:alpha_BVP} (with $\Omega=\mathbf{D}$) to the $k$th Fourier mode.
Each angular frequency $k\in\mathbf{N}$ produces a one-dimensional Sturm--Liouville problem, the eigenvalues of which we denote by
\[
\{\lambda_j^{(k)}(\alpha)\}_{j\in\mathbf{N}}.
\]
We observe that since the potential $V_k(r)=k^2/r^2$ penalizes angular oscillations, the map $k\mapsto\lambda_j^{(k)}(\alpha)$ is strictly increasing for every $j\in\mathbf{N}$.
Consequently, the eigenvalues of \eqref{eq:alpha_BVP} on~$\mathbf{D}$ admit an ordered enumeration
\[
0<\lambda_1^{(0)}(\alpha)
<\lambda_1^{(1)}(\alpha)
<\lambda_1^{(2)}(\alpha)
<\cdots,
\]
where $\lambda_1^{(0)}(\alpha)<\Lambda_1^\circ(\alpha)$ corresponds to the radial optimizer,
and $\lambda_1^{(1)}(\alpha)=\lambda_2^\circ(\alpha)$ is the first nonradial eigenvalue.
The associated eigenspaces have multiplicity two for $k\ge 1$ (cosine and sine modes).

In summary, we have the following lemma:
\begin{lemma}[Mode gap on the disk]\label{lem:mode_gap_disk}
Let $\mathbf{D}\subset\mathbf{R}^2$ be the open unit disk and $\bar{u}_*\in\mathcal{M}_\alpha(\mathbf{D})$ be a radially symmetric positive optimizer.
The first nonradial eigenvalue
$\lambda_2^\circ(\alpha):=\min_{k\ge1}\lambda_1^{(k)}(\alpha)$ satisfies
\[
\Lambda_1^\circ(\alpha) < \lambda_{2}^\circ(\alpha) \quad {\rm for} \quad \alpha\in(0,4\pi].
\]
\end{lemma}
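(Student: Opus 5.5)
The idea is to use the translation mode. Differentiating the Euler--Lagrange equation \eqref{euler_lag_eqn} for the radial optimizer produces an explicit solution of the $k=1$ Sturm--Liouville equation \eqref{eq:SLk} with spectral parameter exactly $\Lambda_1^\circ(\alpha)$. That solution fails the Dirichlet condition at $r=1$ with a definite sign, and a Wronskian comparison with the first $k=1$ eigenfunction turns this sign into the strict inequality.

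\emph{Step 1: the translation mode.} By Carleson--Chang and Gidas--Ni--Nirenberg, $\bar u_*=u_*(r)$ is positive, radial and strictly decreasing. I will also use that it is smooth on $\overline{\mathbf{D}}$; this follows by elliptic regularity, since an optimizer has $e^{\alpha u_*^2}\in L^p$ for some $p>1$, also when $\alpha=4\pi$. Differentiating $-\Delta u_*=\Lambda_1^\circ(\alpha)e^{\alpha u_*^2}u_*$ in $x_1$ shows that $\partial_{x_1}u_*=u_*'(r)\cos\theta$ solves the linearized equation
\[
-\Delta v=\Lambda_1^\circ(\alpha)\,W_\alpha(u_*)\,v .
\]
Hence $\varphi(r):=-u_*'(r)$ satisfies
\[
-(r\varphi')'+\frac{1}{r}\varphi=\Lambda_1^\circ(\alpha)\,rW_\alpha(u_*)\varphi \quad\text{on } (0,1),
\]
which is the $k=1$ equation with weight $r$ (I will use the form with weight $r$ consistently). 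Moreover $\varphi>0$ on $(0,1]$, with $\varphi(0)=0$ and $\varphi(r)=O(r)$ near $0$ by smoothness and radial symmetry. Crucially $\varphi(1)=-u_*'(1)>0$ by the Hopf lemma, since $u_*>0$ is superharmonic and vanishes on $\partial\mathbf{D}$. So $\varphi$ is a positive solution at level $\Lambda_1^\circ(\alpha)$ that does not vanish at $r=1$.

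\emph{Step 2: Wronskian comparison.} Let $\psi>0$ be the first eigenfunction of \eqref{eq:SLk} with $k=1$ and eigenvalue $\lambda_1^{(1)}(\alpha)$. It is positive by the Sturm--Liouville (Krein--Rutman) theory, satisfies $\psi(1)=0$ and $\psi'(1)<0$ by Hopf or ODE uniqueness, and $\psi(r)\sim cr$ near $0$. I will multiply the equation for $\varphi$ by $\psi$ and the equation for $\psi$ by $\varphi$, subtract, and integrate over $(\varepsilon,1)$. This gives
\[
\bigl(\Lambda_1^\circ(\alpha)-\lambda_1^{(1)}(\alpha)\bigr)\int_0^1 rW_\alpha(u_*)\varphi\psi\,\ud r
=\Bigl[r(\varphi\psi'-\varphi'\psi)\Bigr]_{0}^{1}
=\varphi(1)\psi'(1)<0 ,
\]
where the boundary term at $0$ vanishes because both functions are $O(r)$ with $O(1)$ derivatives. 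The integral on the left is strictly positive, so $\Lambda_1^\circ(\alpha)<\lambda_1^{(1)}(\alpha)$.

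\emph{Step 3: reduction to $k=1$.} For $k\ge1$ the Rayleigh quotient of \eqref{eq:SLk} is pointwise increasing in $k$, because $V_k=k^2/r^2$ increases with $k$. Therefore $\lambda_1^{(k)}(\alpha)\ge\lambda_1^{(1)}(\alpha)$, so $\lambda_2^\circ(\alpha)=\min_{k\ge1}\lambda_1^{(k)}(\alpha)=\lambda_1^{(1)}(\alpha)$, and the claim follows for every $\alpha\in(0,4\pi]$.

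The main obstacle is justifying the boundary behaviour: smoothness of the optimizer up to $r=0$ and $r=1$ in the critical case $\alpha=4\pi$, needed for $\varphi(1)>0$ and for the vanishing of the Wronskian at the origin, and the behaviour $\psi\sim cr$ near $0$. For the optimizer I would first try a bootstrap from $e^{4\pi u_*^2}\in L^p$, which follows from the concentration--compactness alternative (iii) used in Lemma~\ref{lm:compactness}, to get $u_*\in L^\infty$ and then $\mathcal C^\infty$. For $\psi$ I would identify $\psi(r)\cos\theta$ with a $W^{1,2}_0$ eigenfunction on $\mathbf{D}$, whose regularity then gives $\psi\sim cr$.
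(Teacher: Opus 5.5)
Your proposal is correct and follows essentially the same route as the paper. You differentiate the radial Euler--Lagrange equation to get a $k=1$ solution at level $\Lambda_1^\circ(\alpha)$ that is nonzero at $r=1$ by Hopf, then use a Wronskian identity against the positive first $k=1$ eigenfunction to force $\Lambda_1^\circ(\alpha)<\lambda_1^{(1)}(\alpha)$, with $\min_{k\ge1}\lambda_1^{(k)}(\alpha)=\lambda_1^{(1)}(\alpha)$ coming from monotonicity in $k$. The differences are cosmetic: you work with $-u_*'>0$ rather than $u_*'<0$, and you justify the vanishing of the boundary term at $r=0$ and the regularity at $\alpha=4\pi$ more carefully than the paper, which leaves them implicit.
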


\begin{proof}
Since $\bar{u}_*\in\mathcal{M}_\alpha(\mathbf{D})$ is radial, the Euler--Lagrange equation reads
\[
\begin{cases}
-\bar{u}_*''(r)-\frac1r \bar{u}_*'(r)=\Lambda_1^\circ(\alpha)e^{\alpha \bar{u}_*^2(r)}\bar{u}_*(r) \quad {\rm in} \quad (0,1),\\
\bar{u}_*'(0)=0,\quad \bar{u}_*(1)=0.  
\end{cases}
\]
Setting $\psi(r):=\bar{u}_*'(r)$ and differentiating in $r$ gives us
\begin{equation}\label{eq:psi-k1}
-\psi''(r)-\frac1r\psi'(r)+\frac{1}{r^2}\psi(r)
=\Lambda_1^\circ(\alpha)(1+2\alpha \bar{u}_*^2(r))e^{\alpha \bar{u}_*^2(r)}\,\psi(r)
=\Lambda_1^\circ(\alpha)W(\bar{u}_*(r))\psi(r).
\end{equation}
Thus $\psi\in \mathcal{C}^2((0,1))$ solves the $k=1$ equation of \eqref{eq:SLk} in the open interval, with
$\psi(0)=0$ and $\psi(1)=\bar{u}_*'(1)<0$, where we used the Hopf lemma for $\bar{u}_*>0$.
Let $\phi_1$ be the positive first eigenfunction of \eqref{eq:SLk} with $k=1$,
normalized, so that $\phi_1>0$ in $(0,1)$ satisfies $\phi_1'(1)<0$ by Hopf’s boundary lemma and
\[
\begin{cases}
-(r\phi_1')'+\frac{1}{r}\phi_1=\lambda_1^{(1)}(\alpha)rW(\bar{u}_*(r))\phi_1 \quad {\rm in} \quad (0,1),\\
\phi_1(0)=\phi_1(1)=0. 
\end{cases}
\]

Next, rewriting \eqref{eq:psi-k1} and the equation for $\phi_1$ as
\[
\begin{cases}
(r\psi')'-\frac{1}{r}\psi=-\Lambda_1^\circ(\alpha)r W_\alpha(\bar{u}_*) \psi,\\
(r\phi_1')'-\frac{1}{r}\phi_1=-\lambda_1^{(1)}(\alpha)r W_\alpha(u_*)\phi_1.
\end{cases}
\]
From this, we get
\[
\int_0^1\big[\psi(r)(r\phi_1'(r))'-\phi_1(r)(r\psi'(r))'\big]\ud r
=\big(\Lambda_1^\circ(\alpha)-\lambda_1^{(1)}(\alpha)\big)\int_0^1 r W_\alpha(\bar{u}_*(r)) \psi(r)\phi_1(r)\ud r.
\]
We integrate the left-hand side by parts to find
\[
\big[r(\psi(r)\phi_1'(r)-\phi_1(r)\psi'(r))\big]_{0}^{1}
=\big(\Lambda_1^\circ(\alpha)-\lambda_1^{(1)}(\alpha)\big)\int_0^1 rW_\alpha(\bar{u}_*(r))\psi(r)\phi_1(r)\ud r.
\]
At $r=0$, we have $\psi(0)=\phi_1(0)=0$, and at $r=1$ we use $\phi_1(1)=0$ to obtain
\[
0<\psi(1)\phi_1'(1)=
\big(\Lambda_1^\circ(\alpha)-\lambda_1^{(1)}(\alpha)\big)\int_0^1 rW_\alpha(\bar{u}_*(r))\psi(r)\phi_1(r)\ud r.
\]
Since $\bar{u}_*\in \mathcal{C}^2((0,1))$ is strictly decreasing in $r$, it follows that $\psi=\bar{u}_*'<0$ and $\phi_1,W_\alpha(\bar{u}_*)>0$ on $(0,1)$, which implies $\psi(1)\phi_1'(1)>0$ and 
\[
\int_0^1 r W_\alpha(\bar{u}_*(r))\psi(r) \phi_1(r)\ud r<0 \quad {\rm for} \quad \alpha\in(0,4\pi].
\]
Therefore, $\Lambda_1^\circ(\alpha)-\lambda_1^{(1)}(\alpha)<0$, which yields
$\lambda_1^{(1)}(\alpha)>\Lambda_1^\circ(\alpha)$.
Finally, $\Lambda_1^\circ(\alpha)>\lambda_1^{(0)}(\alpha)$ follows from the 
fact that $e^{\alpha \bar{u}_*^2(r)}<W_\alpha(\bar{u}_*(r))$.
We conclude the proof of the lemma.
\end{proof}

\begin{remark}
This spectral hierarchy has a direct variational interpretation.
If we write the constrained second variation of the Trudinger--Moser functional at
$\bar{u}_*\in\mathcal{M}_\alpha(\mathbf{D})\cap\mathcal{S}_1$ as
\[
\mathscr{Q}_{\alpha,\mathbf{D}}[w]
=\int_{\mathbf{D}}|\nabla w|^2\ud x
-\Lambda_1^\circ(\alpha)\int_{\mathbf{D}}(1+2\alpha \bar{u}_*^2)e^{\alpha \bar{u}_*^2}w^2\ud x,
\]
then the decomposition $w=\sum_{k=0}^\infty w^{(k)}$ yields
\[
\mathscr{Q}_{\alpha,\mathbf{D}}[w]
=\sum_{k=0}^\infty
\big(\lambda_1^{(k)}(\alpha)-\Lambda_1^\circ(\alpha)\big)
\Lambda_1^\circ(\alpha)\int_{\mathbf{D}}(1+2\alpha u_*^2)e^{\alpha u_*^2}{w^{(k)}}^2\ud x.
\]
In particular, the positive spectral gap
$\lambda_1^{(1)}(\alpha)-\Lambda_1^\circ(\alpha)>0$
from Lemma~\ref{lem:mode_gap_disk}
provides a uniform lower bound on the Hessian 
\[
\mathscr{Q}_{\alpha,\mathbf{D}}[w] \ge
\big(\lambda_2^\circ(\alpha)-\Lambda_1^\circ(\alpha)\big)
\|\nabla w_\perp\|_{L^2(\mathbf{D})}^2,
\]
where $w_\perp=\sum_{k=1}^\infty w^{(k)}$ represents the projection onto all nonradial directions.
This inequality expresses the quadratic coercivity of the Trudinger--Moser functional
restricted to $\mathcal{S}_1$ and is the analytic core of the critical stability
result on the disk.
\end{remark}

\subsection{Symmetric critical case}\label{subsec:critical_case}
Now, we are ready to prove the main stability result, which is based on Lemma~\ref{lem:mode_gap_disk}.

\begin{proof}[Proof of Theorem~\ref{thm:critical_stability}]
Initially, we observe that by Lemma~\ref{lm:sphere_to_ball}, it is enough to work on the unit sphere $\mathcal{S}_1\subset W^{1,2}_0(\mathbf{D})$.
We divide the proof into two claims as follows:

\noindent{\bf Claim 1:} Any optimizer $\bar{u}_*\in \mathcal{M}_{\alpha}(\mathbf{D})\cap\mathcal{S}_1$ is positive and radially symmetric.

\noindent Since $\bar{u}_*\in \mathcal{M}_{\alpha}(\mathbf{D})\cap\mathcal{S}_1$ is a critical point of $\mathscr{E}_{\alpha,\mathbf{D}}$, it satisfies $\mathscr{E}_{\alpha,\mathbf{D}}(\bar{u}_*)={\rm TM}(\alpha,\mathbf{D})$, or equivalently
\begin{equation}\label{euler_lag_eqn_disk}
		 \begin{cases}
			-\Delta \bar{u}_*  = \Lambda_1^{\circ}(\alpha)  e^{\alpha \bar{u}_{*}^2} \bar{u}_* & \mbox{in} \quad \mathbf{D}\\
			\bar{u}_*=0 & \mbox{on} \quad\partial \mathbf{D}.
		\end{cases} 
	\end{equation}
Hence, since $f_\alpha(s)=\Lambda_1^{\circ}(\alpha)  e^{\alpha s^2} s$ satisfies $f_\alpha\in \mathcal{C}^1(\mathbf{R})$, by the classical result of Gidas, Ni, and Nirenberg~\cite[Theorem~1]{MR634248}, it follows that $\bar{u}_*\in \mathcal{C}^2(\mathbf{D})\cap W^{1,2}_{0,{\rm rad}}(\mathbf{D})$ is radially symmetric and nonincreasing, {\it i.e.,} $\partial_r \bar{u}_*\leq 0$ for $r>0$. 
Moreover, the positivity $\bar{u}_*>0$ is a direct consequence of the standard maximum principle.

\noindent{\bf Claim 2:} For all $\alpha\in(0,4\pi]$ and $u\in\mathcal{S}_1$, it holds
\[
{\rm def}_{\alpha,\mathbf{D}}(u) \ge C(\alpha)\,\|\nabla(u-\bar{u}_*)\|_{L^2(\mathbf{D})}^2.
\]

\noindent Initially, we decompose any $w\in T_{\bar u_*}\mathcal{S}_1$ into angular modes
$w=\sum_{k=0}^\infty w^{(k)}$, which are orthogonal in $L^2(\mathbf{D})$ and in the Dirichlet form.
The quadratic form \eqref{eq:Qalpha-disk} splits modewise as
\[
\mathscr{Q}_{\alpha,\mathbf{D}}[w]=\sum_{k=0}^\infty\mathscr{Q}_{\alpha,\mathbf{D}}[w^{(k)}].
\]

Subsequently, we divide the rest of the proof into some steps with respect to the Fourier eigenmodes. 

\noindent\textit{Step 1: Radial sector ($k=0$).}
By Lemma~\ref{lem:Qalpha-kernel-extension}\textup{(iii)}, the first eigenfunction $v_*>0$ of \eqref{eq:alpha_BVP} satisfies $v_*\notin T_{\bar u_*}\mathcal{S}_1$.
Since $v_*$ is the unique (up to scaling) eigenfunction with eigenvalue $\lambda_1^{(0)}(\alpha)<\Lambda_1^\circ(\alpha)$,
the tangent space constraint projects out the only direction on which $\mathscr{Q}_{\alpha,\mathbf{D}}$ is negative.
More precisely, by the Courant--Fischer min-max characterization (cf.\ \cite[Chapter~VI]{CourantHilbert}),
the Rayleigh quotient
\[
\mathcal{R}_{\alpha,\mathbf{D}}(w):=\frac{\int_{\mathbf{D}}|\nabla w^{(0)}|^2\ud x}{\int_{\mathbf{D}}(1+2\alpha\bar u_*^2)e^{\alpha\bar u_*^2}(w^{(0)})^2\ud x},
\]
restricted to $T_{\bar u_*}\mathcal{S}_1\cap W^{1,2}_{0,{\rm rad}}(\mathbf{D})$, which has codimension one in the radial sector, with the removed direction spanned by $v_*$ satisfies 
$\mathcal{R}_{\alpha,\mathbf{D}}(w)\geq\lambda_2^{(0)}(\alpha)$.  Next, using that 
\[
\mathscr{Q}_{\alpha,\mathbf{D}}[w^{(0)}]=\|\nabla w^{(0)}\|_{L^2(\mathbf{D})}^2-\Lambda_1^\circ(\alpha)\int_{\mathbf{D}}(1+2\alpha\bar u_*^2)e^{\alpha\bar u_*^2}(w^{(0)})^2\ud x,
\]
one has
\begin{equation}\label{eq:radial-coercivity}
\mathscr{Q}_{\alpha,\mathbf{D}}[w^{(0)}]\ge \left(1-\frac{\Lambda_1^\circ(\alpha)}{\lambda_2^{(0)}(\alpha)}\right)\|\nabla w^{(0)}\|_{L^2(\mathbf{D})}^2
\quad\text{for all} \quad w^{(0)}\in T_{\bar u_*}\mathcal{S}_1\cap W^{1,2}_{0,{\rm rad}}(\mathbf{D}),
\end{equation}
where $\lambda_2^{(0)}(\alpha)$ is the second radial Sturm--Liouville eigenvalue.
Finally, since 
\[
\lambda_2^{(0)}(\alpha)\to\mu_2^{(0)}=j_{0,2}^2\approx 30.47 \quad {\rm as} \quad \alpha\to0, \quad {\rm and} \quad \Lambda_1^\circ(\alpha)\le\mu_1=j_{0,1}^2\approx 5.78,
\]
the strict inequality $\lambda_2^{(0)}(\alpha)>\Lambda_1^\circ(\alpha)$ holds for all $\alpha\in(0,4\pi]$.

\noindent\textit{Step 2: Nonradial sectors ($k\ge 1$).}
On each nonradial sector $k\ge1$, Lemma~\ref{lem:mode_gap_disk} gives us a uniform spectral gap
\[
\mathscr{Q}_{\alpha,\mathbf{D}}[w^{(k)}]
 \ge \big(\lambda_1^{(k)}(\alpha)-\Lambda_1^\circ(\alpha)\big)\| \nabla w^{(k)}\|_{L^2(\mathbf{D})}^2
\ge c_\circ(\alpha)\|\nabla w^{(k)}\|_{L^2(\mathbf{D})}^2,
\]
with $c_\circ(\alpha):=\lambda_2^\circ(\alpha)-\Lambda_1^\circ(\alpha)>0$.

\noindent\textit{Step 3: Full coercivity.}
Combining \eqref{eq:radial-coercivity} with the nonradial estimate and setting
\[
\tilde c(\alpha):=\min\big\{\lambda_2^{(0)}(\alpha)-\Lambda_1^\circ(\alpha),\,c_\circ(\alpha)\big\}>0,
\]
a direct summation yields
\begin{equation}\label{eq:coercivity-disk}
\mathscr{Q}_{\alpha,\mathbf{D}}[w]\ge \tilde c(\alpha)\|\nabla w\|_{L^2(\mathbf{D})}^2
\quad\text{for all }w\in T_{\bar u_*}\mathcal{S}_1.
\end{equation}

For $u\in\mathcal{S}_1$, we write $u=\bar u_*+w$, with $w\in T_{\bar u_*}\mathcal{S}_1$.
By the Taylor expansion around the optimizer in Lemma~\ref{lm:taylor_expansion}, it follows
\[
\mathscr{E}_{\alpha,\mathbf{D}}(\bar u_*)-\mathscr{E}_{\alpha,\mathbf{D}}(u)
=\frac{\alpha}{\Lambda_1^\circ(\alpha)}\mathscr{Q}_\alpha[w]
+\mathcal{O}\big(\|\nabla w\|_{L^2(\mathbf{D})}^3\big) \quad {\rm as} \quad \|\nabla w\|_{L^2(\mathbf{D})}\to 0.
\]
Using \eqref{eq:coercivity-disk}, for $0<\|\nabla w\|_{L^2(\mathbf{D})}\ll 1$ sufficiently small one obtains $C_\circ(\alpha)>0$ such that
\[
{\rm def}_{\alpha,\mathbf{D}}(u)
 \ge C_\circ(\alpha)\|\nabla(u-\bar u_*)\|_{L^2(\mathbf{D})}^2 \quad {\rm for \ all} \quad u\in\mathcal{S}_1.
\]
The proof of the theorem is concluded.
\end{proof}

\section*{Declarations}

\bigskip

\begin{flushleft}
	{\bf Ethical Approval:}  Not applicable.\\
	{\bf Competing interests:}  Not applicable. \\
	{\bf Authors' contributions:}  All authors contributed equally to the results and writing of the manuscript.\\
	{\bf Availability of data and material:}  All data generated or analyzed during this study are included in this article.\\
	{\bf Consent to participate:}  All authors consent to participate in this work.\\
	{\bf Conflict of interest:} The authors declare that they have no conflict of interest. \\
	{\bf Consent for publication:}  All authors consent for publication. \\
\end{flushleft}

\bigskip

\bibliography{references}

@article {Lions85,
    AUTHOR = {Lions, P.-L.},
     TITLE = {The concentration-compactness principle in the calculus of
              variations. {T}he limit case. {I}},
   JOURNAL = {Rev. Mat. Iberoamericana},
  FJOURNAL = {Revista Matem\'{a}tica Iberoamericana},
    VOLUME = {1},
      YEAR = {1985},
    NUMBER = {1},
     PAGES = {145--201},
      ISSN = {0213-2230},
   MRCLASS = {49A22 (58E30)},
  MRNUMBER = {834360},
MRREVIEWER = {Ll. G. Chambers},
       DOI = {10.4171/RMI/6},
       URL = {https://doi.org/10.4171/RMI/6},
}

@article {Trudinger67,
    AUTHOR = {Trudinger, Neil S.},
     TITLE = {On imbeddings into {O}rlicz spaces and some applications},
   JOURNAL = {J. Math. Mech.},
    VOLUME = {17},
      YEAR = {1967},
     PAGES = {473--483},
   MRCLASS = {46.38},
  MRNUMBER = {0216286},
MRREVIEWER = {J. Albrycht},
       DOI = {10.1512/iumj.1968.17.17028},
       URL = {https://doi.org/10.1512/iumj.1968.17.17028},
}

@article {Carleson-Chang,
    AUTHOR = {Carleson, Lennart and Chang, Sun-Yung A.},
     TITLE = {On the existence of an extremal function for an inequality of
              {J}. {M}oser},
   JOURNAL = {Bull. Sci. Math. (2)},
  FJOURNAL = {Bulletin des Sciences Math\'{e}matiques. 2e S\'{e}rie},
    VOLUME = {110},
      YEAR = {1986},
    NUMBER = {2},
     PAGES = {113--127},
      ISSN = {0007-4497},
   MRCLASS = {46E35 (46E30)},
  MRNUMBER = {878016},
MRREVIEWER = {H. Triebel},
}

@article {Moser1970/71,
	AUTHOR = {Moser, J.},
	TITLE = {A sharp form of an inequality by {N}. {T}rudinger},
	JOURNAL = {Indiana Univ. Math. J.},
	FJOURNAL = {Indiana University Mathematics Journal},
	VOLUME = {20},
	YEAR = {1970/71},
	PAGES = {1077--1092},
	ISSN = {0022-2518},
	MRCLASS = {46E35},
	MRNUMBER = {0301504},
	MRREVIEWER = {G. O. Okikiolu},
	DOI = {10.1512/iumj.1971.20.20101},
	URL = {https://doi.org/10.1512/iumj.1971.20.20101},
}

@article {Flucher1992,
    AUTHOR = {Flucher, Martin},
     TITLE = {Extremal functions for the {T}rudinger-{M}oser inequality in
              {$2$} dimensions},
   JOURNAL = {Comment. Math. Helv.},
  FJOURNAL = {Commentarii Mathematici Helvetici},
    VOLUME = {67},
      YEAR = {1992},
    NUMBER = {3},
     PAGES = {471--497},
      ISSN = {0010-2571},
   MRCLASS = {58E35 (49Q15)},
  MRNUMBER = {1171306},
MRREVIEWER = {Helmut Kaul},
       DOI = {10.1007/BF02566514},
       URL = {https://doi.org/10.1007/BF02566514},
}

@article {Lin1996,
    AUTHOR = {Lin, Kai-Ching},
     TITLE = {Extremal functions for {M}oser's inequality},
   JOURNAL = {Trans. Amer. Math. Soc.},
  FJOURNAL = {Transactions of the American Mathematical Society},
    VOLUME = {348},
      YEAR = {1996},
    NUMBER = {7},
     PAGES = {2663--2671},
      ISSN = {0002-9947},
   MRCLASS = {58E35 (46E35 49Q20)},
  MRNUMBER = {1333394},
MRREVIEWER = {Tero Kilpel\"{a}inen},
       DOI = {10.1090/S0002-9947-96-01541-3},
       URL = {https://doi.org/10.1090/S0002-9947-96-01541-3},
}

@article{CarlenFigalli2013,
 author = {Carlen, Eric A. and Figalli, Alessio},
 title = {Stability for a {GNS} inequality and the log-{HLS} inequality, with application to the critical mass {Keller}-{Segel} equation},
 fjournal = {Duke Mathematical Journal},
 journal = {Duke Math. J.},
 issn = {0012-7094},
 volume = {162},
 number = {3},
 pages = {579--625},
 year = {2013},
 language = {English},
 doi = {10.1215/00127094-2019931},
 zbMATH = {6145496},
 Zbl = {1307.26027}
}

@incollection{FrankKonigTang2023,
 author = {Frank, Rupert L.},
 title = {The sharp {Sobolev} inequality and its stability: an introduction},
 booktitle = {Geometric and analytic aspects of functional variational principles. Cetraro, Italy, June 2022. Lecture notes},
 isbn = {978-3-031-67600-0; 978-3-031-67601-7},
 pages = {1--64},
 year = {2024},
 publisher = {Cham: Springer; Florence: Fondazione CIME},
 language = {English},
 doi = {10.1007/978-3-031-67601-7_1},
 zbMATH = {8018427},
 Zbl = {1564.49012}
}

@article{FigalliZhang2022,
 author = {Figalli, Alessio and Zhang, Yi Ru-Ya},
 title = {Sharp gradient stability for the {Sobolev} inequality},
 fjournal = {Duke Mathematical Journal},
 journal = {Duke Math. J.},
 issn = {0012-7094},
 volume = {171},
 number = {12},
 pages = {2407--2459},
 year = {2022},
 language = {English},
 doi = {10.1215/00127094-2022-0051},
 zbMATH = {7600543},
 Zbl = {1504.46040}
}

@article{DEFFL2025,
 author = {Dolbeault, Jean and Esteban, Maria J. and Figalli, Alessio and Frank, Rupert L. and Loss, Michael},
 title = {Sharp stability for {Sobolev} and log-{Sobolev} inequalities, with optimal dimensional dependence},
 fjournal = {Cambridge Journal of Mathematics},
 journal = {Camb. J. Math.},
 issn = {2168-0930},
 volume = {13},
 number = {2},
 pages = {359--430},
 year = {2025},
 language = {English},
 doi = {10.4310/CJM.250325022725},
 zbMATH = {8018035},
 Zbl = {1561.49020}
}

@article{BE,
title={A note on the {S}obolev inequality},
author={G. Bianchi and H. Egnell}, 
journal={J. Funct. Anal.},
volume={100},
year={1991}, 
pages={18--24}
}

@book {MR2759829,
    AUTHOR = {Brezis, Haim},
     TITLE = {Functional analysis, {S}obolev spaces and partial differential
              equations},
    SERIES = {Universitext},
 PUBLISHER = {Springer, New York},
      YEAR = {2011},
     PAGES = {xiv+599},
      ISBN = {978-0-387-70913-0},
   MRCLASS = {35-01 (46-01 46E35 46N20 47F05)},
  MRNUMBER = {2759829},
MRREVIEWER = {Vicen\c{t}iu D. R\u{a}dulescu},
}

@article {MR790771,
    AUTHOR = {Brezis, Ha\"{\i}m and Lieb, Elliott H.},
     TITLE = {Sobolev inequalities with remainder terms},
   JOURNAL = {J. Funct. Anal.},
  FJOURNAL = {Journal of Functional Analysis},
    VOLUME = {62},
      YEAR = {1985},
    NUMBER = {1},
     PAGES = {73--86},
      ISSN = {0022-1236},
   MRCLASS = {46E35},
  MRNUMBER = {790771},
MRREVIEWER = {H. Triebel},
       DOI = {10.1016/0022-1236(85)90020-5},
       URL = {https://doi.org/10.1016/0022-1236(85)90020-5},
}

@incollection {MR634248,
    AUTHOR = {Gidas, B. and Ni, Wei Ming and Nirenberg, L.},
     TITLE = {Symmetry of positive solutions of nonlinear elliptic equations
              in {${\bf R}\sp{n}$}},
 BOOKTITLE = {Mathematical analysis and applications, {P}art {A}},
    SERIES = {Adv. Math. Suppl. Stud.},
    VOLUME = {7},
     PAGES = {369--402},
 PUBLISHER = {Academic Press, New York-London},
      YEAR = {1981},
   MRCLASS = {35J60 (53C05 58G20)},
  MRNUMBER = {634248},
MRREVIEWER = {D. E. Edmunds},
}

@article {MR4427104,
    AUTHOR = {Engelstein, Max and Neumayer, Robin and Spolaor, Luca},
     TITLE = {Quantitative stability for minimizing {Y}amabe metrics},
   JOURNAL = {Trans. Amer. Math. Soc. Ser. B},
  FJOURNAL = {Transactions of the American Mathematical Society. Series B},
    VOLUME = {9},
      YEAR = {2022},
     PAGES = {395--414},
   MRCLASS = {53C18 (26D10 58E05)},
  MRNUMBER = {4427104},
MRREVIEWER = {Ilaria Mondello},
       DOI = {10.1090/btran/111},
       URL = {https://doi.org/10.1090/btran/111},
}

@article {MR185604,
    AUTHOR = {Smale, S.},
     TITLE = {An infinite dimensional version of {S}ard's theorem},
   JOURNAL = {Amer. J. Math.},
  FJOURNAL = {American Journal of Mathematics},
    VOLUME = {87},
      YEAR = {1965},
     PAGES = {861--866},
      ISSN = {0002-9327},
   MRCLASS = {57.55 (57.50)},
  MRNUMBER = {185604},
MRREVIEWER = {Richard Beals},
       DOI = {10.2307/2373250},
       URL = {https://doi.org/10.2307/2373250},
}

@book{Zeidler1985,
  author    = {Eberhard Zeidler},
  title     = {Applied Functional Analysis: Applications to Mathematical Physics},
  series    = {Applied Mathematical Sciences},
  volume    = {108},
  publisher = {Springer},
  address   = {New York},
  year      = {1985},
  note      = {}
}

@article {MR4623703,
    AUTHOR = {K\"{o}nig, Tobias},
     TITLE = {On the sharp constant in the {B}ianchi-{E}gnell stability
              inequality},
   JOURNAL = {Bull. Lond. Math. Soc.},
  FJOURNAL = {Bulletin of the London Mathematical Society},
    VOLUME = {55},
      YEAR = {2023},
    NUMBER = {4},
     PAGES = {2070--2075},
      ISSN = {0024-6093},
   MRCLASS = {26D10 (58E35)},
  MRNUMBER = {4623703},
MRREVIEWER = {Alireza Ranjbar-Motlagh},
       DOI = {10.1112/blms.12837},
       URL = {https://doi.org/10.1112/blms.12837},
}

@misc{arxiv:2211.14185,
      title={Stability for the {S}obolev inequality: existence of a minimizer}, 
      author={Tobias König},
      journal={J. Eur. Math. Soc.},
      year={2025},
      eprint={2211.14185},
      archivePrefix={arXiv},
      primaryClass={math.AP},
      url={https://arxiv.org/abs/2211.14185}, 
}

@misc{andrade2024quantitativestabilitytotalqcurvature,
      title={Quantitative stability of the total $Q$-curvature near minimizing metrics}, 
      author={João Henrique Andrade and Tobias König and Jesse Ratzkin and Juncheng Wei},
      note={arXiv:2407.06934 [math.AP]},
      year={2024},
      eprint={2407.06934},
      archivePrefix={arXiv},
      primaryClass={math.AP},
      url={https://arxiv.org/abs/2407.06934}, 
}

@article{CFW,
 Author = {Chen, Shibing and Frank, Rupert L. and Weth, Tobias},
 Title = {Remainder terms in the fractional {Sobolev} inequality},
 FJournal = {Indiana University Mathematics Journal},
 Journal = {Indiana Univ. Math. J.},
 ISSN = {0022-2518},
 Volume = {62},
 Number = {4},
 Pages = {1381--1397},
 Year = {2013},
 Language = {English},
 DOI = {10.1512/iumj.2013.62.5065},
 URL = {resolver.caltech.edu/CaltechAUTHORS:20170502-150250004},
 zbMATH = {6293939},
 Zbl = {1296.46032}
}

@article{LW,
 Author = {Lu, Guozhen and Wei, Juncheng},
 Title = {On a {Sobolev} inequality with remainder terms},
 FJournal = {Proceedings of the American Mathematical Society},
 Journal = {Proc. Am. Math. Soc.},
 ISSN = {0002-9939},
 Volume = {128},
 Number = {1},
 Pages = {75--84},
 Year = {2000},
 Language = {English},
 DOI = {10.1090/S0002-9939-99-05497-0},
 zbMATH = {1355927},
 Zbl = {0961.35100}
}

@misc{deFigueiredo1982,
 author = {de Figueiredo, Djairo Guedes},
 title = {Positive solutions of semilinear elliptic problems},
 year = {1982},
 language = {English},
 howpublished = {Differential equations, {Proc}., {Sao} {Paulo} 1981, {Lect}. {Notes} {Math}. 957, 34-87 (1982).},
 doi = {10.1007/bfb0066233},
 zbMATH = {3798268},
 Zbl = {0506.35038}
}

@article{Cerny2013,
 author = {{\v{C}}ern{\'y}, Robert and Cianchi, Andrea and Hencl, Stanislav},
 title = {Concentration-compactness principles for {Moser}-{Trudinger} inequalities: new results and proofs},
 fjournal = {Annali di Matematica Pura ed Applicata. Serie Quarta},
 journal = {Ann. Mat. Pura Appl. (4)},
 issn = {0373-3114},
 volume = {192},
 number = {2},
 pages = {225--243},
 year = {2013},
 language = {English},
 doi = {10.1007/s10231-011-0220-3},
 zbMATH = {6153611},
 Zbl = {1272.46023}
}

@article{LuLu2020,
 author = {Chen, Lu and Lu, Guozhen and Zhu, Maochun},
 title = {Existence and nonexistence of extremals for critical {Adams} inequalities in {{\(\mathbb{R}^4\)}} and {Trudinger}-{Moser} inequalities in {{\(\mathbb{R}^2\)}}},
 fjournal = {Advances in Mathematics},
 journal = {Adv. Math.},
 issn = {0001-8708},
 volume = {368},
 pages = {60},
 note = {Id/No 107143},
 year = {2020},
 language = {English},
 doi = {10.1016/j.aim.2020.107143},
 zbMATH = {7201202},
 Zbl = {1441.35121}
}

@article {MR2560131,
    AUTHOR = {Micheletti, Anna Maria and Pistoia, Angela},
     TITLE = {Generic properties of singularly perturbed nonlinear elliptic
              problems on {R}iemannian manifold},
   JOURNAL = {Adv. Nonlinear Stud.},
  FJOURNAL = {Advanced Nonlinear Studies},
    VOLUME = {9},
      YEAR = {2009},
    NUMBER = {4},
     PAGES = {803--813},
      ISSN = {1536-1365},
   MRCLASS = {53C21 (58E05)},
  MRNUMBER = {2560131},
MRREVIEWER = {Youssef Maliki},
       DOI = {10.1515/ans-2009-0411},
       URL = {https://doi.org/10.1515/ans-2009-0411},
}

@article {MR2982783,
    AUTHOR = {Ghimenti, Marco and Micheletti, Anna Maria},
     TITLE = {Non degeneracy for solutions of singularly perturbed nonlinear
              elliptic problems on symmetric {R}iemannian manifolds},
   JOURNAL = {Commun. Pure Appl. Anal.},
  FJOURNAL = {Communications on Pure and Applied Analysis},
    VOLUME = {12},
      YEAR = {2013},
    NUMBER = {2},
     PAGES = {679--693},
      ISSN = {1534-0392},
   MRCLASS = {58J05 (35B25 35J61 35R01)},
  MRNUMBER = {2982783},
MRREVIEWER = {Mircea Crasmareanu},
       DOI = {10.3934/cpaa.2013.12.679},
       URL = {https://doi.org/10.3934/cpaa.2013.12.679},
}

@article {deFigueiredo2002,
    AUTHOR = {de Figueiredo, D. G. and do \'O, J. M. and Ruf, B.},
     TITLE = {On an inequality by {N}. {T}rudinger and {J}. {M}oser and
              related elliptic equations},
   JOURNAL = {Comm. Pure Appl. Math.},
  FJOURNAL = {Communications on Pure and Applied Mathematics},
    VOLUME = {55},
      YEAR = {2002},
    NUMBER = {2},
     PAGES = {135--152},
      ISSN = {0010-3640},
   MRCLASS = {35J60 (35B33 46E35)},
  MRNUMBER = {1857881},
MRREVIEWER = {Maria Assunta Pozio},
       DOI = {10.1002/cpa.10015},
       URL = {https://doi.org/10.1002/cpa.10015},
}

@book{CourantHilbert,
    AUTHOR = {Courant, Richard and Hilbert, David},
     TITLE = {Methods of Mathematical Physics. {V}ol.\ {I}},
 PUBLISHER = {Interscience Publishers, Inc., New York, N.Y.},
      YEAR = {1953},
     PAGES = {xv+561},
  MRNUMBER = {0065391},
}

@article {Wei_Wu,
    AUTHOR = {Wei, Juncheng and Wu, Yuanze},
     TITLE = {On the stability of the {C}affarelli-{K}ohn-{N}irenberg
              inequality},
   JOURNAL = {Math. Ann.},
  FJOURNAL = {Mathematische Annalen},
    VOLUME = {384},
      YEAR = {2022},
    NUMBER = {3-4},
     PAGES = {1509--1546},
      ISSN = {0025-5831,1432-1807},
   MRCLASS = {35B09 (35B33 35B40 35J20)},
  MRNUMBER = {4498479},
       DOI = {10.1007/s00208-021-02325-0},
       URL = {https://doi.org/10.1007/s00208-021-02325-0},
}

@article {Frank_Peteranderl,
    AUTHOR = {Frank, Rupert L. and Peteranderl, Jonas W.},
     TITLE = {Degenerate stability of the {C}affarelli-{K}ohn-{N}irenberg
              inequality along the {F}elli-{S}chneider curve},
   JOURNAL = {Calc. Var. Partial Differential Equations},
  FJOURNAL = {Calculus of Variations and Partial Differential Equations},
    VOLUME = {63},
      YEAR = {2024},
    NUMBER = {2},
     PAGES = {Paper No. 44, 33},
      ISSN = {0944-2669,1432-0835},
   MRCLASS = {35J20 (35A23 46E35)},
  MRNUMBER = {4695793},
MRREVIEWER = {Alireza\ Ranjbar-Motlagh},
       DOI = {10.1007/s00526-023-02641-0},
       URL = {https://doi.org/10.1007/s00526-023-02641-0},
}

@article {CLT,
    AUTHOR = {Chen, Lu and Lu, Guozhen and Tang, Hanli},
     TITLE = {Stability of {H}ardy-{L}ittlewood-{S}obolev inequalities with
              explicit lower bounds},
   JOURNAL = {Adv. Math.},
  FJOURNAL = {Advances in Mathematics},
    VOLUME = {450},
      YEAR = {2024},
     PAGES = {Paper No. 109778, 28},
      ISSN = {0001-8708,1090-2082},
   MRCLASS = {46E35 (26D10 35A23)},
  MRNUMBER = {4758206},
MRREVIEWER = {Kanailal\ Mahato},
       DOI = {10.1016/j.aim.2024.109778},
       URL = {https://doi.org/10.1016/j.aim.2024.109778},
}
\bibliographystyle{abbrv}

\end{document}